\numberwithin{equation}{section}
\newtheorem{thm}{Theorem}[section]
\newtheorem{lem}[thm]{Lemma}
\newtheorem{prop}[thm]{Proposition}
\theoremstyle{definition}
\newtheorem{defn}[thm]{Definition}
\newtheorem{quest}[thm]{Question}
\theoremstyle{remark}
\newtheorem{rem}[thm]{Remark}
\newcommand\MUD{\operatorname{MUD}}
\newcommand\UD{\operatorname{UD}}
\newcommand\Cat{\operatorname{Cat}}
\newcommand\VT{\operatorname{VT}}
\newcommand\MVT{\operatorname{MVT}}
\newcommand\Par{\operatorname{Par}}
\newcommand{\la}{\lambda}
\newcommand{\SYT}{\operatorname{SYT}}
\newcommand{\pop}{{p_1^{\perp}}}
\newcommand\QQ{\mathbb{Q}}
\newcommand{\ZZ}{\mathbb{Z}}
\renewcommand\vec[1]{\mathbf{#1}}
\newcommand\vx{\mathbf{x}}
\newcommand\vu{\mathbf{u}}
\newcommand\vv{\mathbf{v}}
\DeclareMathOperator*{\Pf}{Pf}
\newcommand\flr[1]{\lfloor  #1 \rfloor}
\newcommand\sorth{\mathrm{so}}
\newcommand\orth{\mathrm{o}}
\def\al{\alpha}
\def\ep{\varepsilon}
\def\Z{\mathbb Z}
\def\Zp{\mathbb Z_{\geq0}}
\def\sgn{\operatorname{sgn}}
\begin{document}

\title{Bounded Littlewood identities with
fixed number of odd rows or odd columns}

\author{JiSun Huh}
\address{Department of Mathematics, University of Seoul, Seoul, Republic of Korea}
\email{hyunyjia@yonsei.ac.kr}

\author{Jang Soo Kim}
\address{Department of Mathematics, Sungkyunkwan University, Suwon, Republic of Korea}
\email{jangsookim@skku.edu}

\author{Christian Krattenthaler}
\address{Fakult\"at f\"ur Mathematik, Universit\"at Wien,
Oskar-Morgenstern-Platz~1, A-1090 Vienna, Austria.}
\email{Christian.Krattenthaler@univie.ac.at}

\author{Soichi Okada}
\address{%
Graduate School of Mathematics, Nagoya University,
Furo-cho, Chikusa-ku, Nagoya 464-8602, Japan}
\email{okada@math.nagoya-u.ac.jp}

\keywords{Symmetric functions, Schur function, skewing operator,
up-down tableau,  vacillating tableau, 
lattice walk, nonintersecting lattice paths, Catalan numbers}
\subjclass[2020]{Primary 05E05; Secondary 05A15, 05A19}

\begin{abstract} 
A Littlewood identity is an identity equating a sum of Schur functions
with an infinite product. A bounded Littlewood identity is one where
the sum is taken over the partitions with a bounded number of
rows or columns. The price to pay is that the infinite product has to
be replaced by a determinant.
  The focus of this article is on refinements of such bounded Littlewood
identities where one also prescribes the number of odd-length rows or columns
of the partitions. Goulden [{\it Discrete Math.} {\bf99} (1992), 69--77]
had given such a refinement in which the number of columns is bounded
and the number of odd-length rows is prescribed. We provide
refinements where the number of columns is bounded and the number of
odd-length columns is prescribed. Furthermore, we present new formulations
of such bounded Littlewood identities involving skewing operators.
As corollaries we obtain non-standard formulas for numbers of standard
Young tableaux with restricted shapes as above.
In the last part of the article we discuss combinatorial
interpretations of such identities in terms of up-down tableaux.
As corollaries, we obtain identities between numbers of standard Young
tableaux and numbers of (marked) vacillating tableaux.
\end{abstract}

\maketitle
 \setcounter{tocdepth}{1}
\tableofcontents



\section{Introduction}
\label{sec:intro}

It is common understanding nowadays, when one speaks of a {\it``Littlewood
identity"}, to mean an identity that equates an infinite sum of {\it Schur
functions} to an infinite product. (We refer the reader to
Section~\ref{sec:pre} for all definitions, and to \cite{Macdonald} and
\cite[Chapter~7]{StanBI} for in-depth introductions to the theory of
symmetric functions.) The prototypical example is
$$
  \sum_{\lambda} s_{\lambda}(\vx) =
  \frac{ 1 } { \prod_i ( 1 - x_i  ) \prod_{i<j} ( 1 - x_i x_j ) },
$$
where $s_\lambda(\vx)$ denotes the Schur function indexed by the
partition~$\lambda$, even though this particular identity is due
to Schur. (The reader may want to consult the introduction
of~\cite{HKKO} for a more detailed discussion of the history of
Littlewood(-type) identities.) Other Littlewood identities provide
product formulas for sums of Schur functions over subsets of the set
of partitions, such as partitions with only even parts or
self-conjugate partitions. A {\it bounded Littlewood identity} is one
where the sum of Schur functions is over partitions with a restricted
number of columns or rows. The theorem below contains the two
classical bounded Littlewood identities. We refer the reader again to
the introduction of~\cite{HKKO} for attribution and slightly
convoluted history of these identities.

\begin{thm}[\sc Two bounded Littlewood identities]
  \label{thm:BK-intro}
  For a nonnegative integer \( w \), we have
  \begin{equation}
    \label{eq:BK_odd1}
    \sum_{\la:\lambda_1\le 2w+1} s_{\lambda}(\vx)
    =
    \sum_{k \ge 0} e_k(\vx) \cdot
    \det_{1 \le i,j \le w}
    \left(
      f_{i-j}(\vx) - f_{i+j}(\vx)
    \right)
\end{equation}
and
\begin{equation}
    \label{eq:BK_even1}
    \sum_{\la:\lambda_1\le 2w} s_{\lambda}(\vx)
    =
    \det_{1 \le i,j \le w}
    \left(
      f_{i-j}(\vx) + f_{i+j-1}(\vx)
    \right),
  \end{equation}
  where \( e_k(\vx) \) is the \( k \)-th elementary symmetric function and
  \begin{equation}\label{eq:f_k}
    f_r(\vx)=\sum_{n\in \Z}e_n(\vx) e_{n+r}(\vx).
  \end{equation}
\end{thm}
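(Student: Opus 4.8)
The plan is to prove both identities by one method, taking \(N=2w+1\) for \eqref{eq:BK_odd1} and \(N=2w\) for \eqref{eq:BK_even1}; the parity of \(N\) will account for the two different right-hand sides. First I would pass from a sum of Schur functions to a sum of determinants. By the dual Jacobi--Trudi (N\"agelsbach--Kostka) identity \(s_\la(\vx)=\det_{1\le i,j\le\la_1}(e_{\la'_i-i+j}(\vx))\), padded with the rows coming from the empty parts of \(\la'\) --- which form an upper unitriangular block and leave the determinant unchanged --- one has \(s_\la(\vx)=\det_{1\le i,j\le N}(e_{\la'_i-i+j}(\vx))\) whenever \(\la_1\le N\). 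Since conjugation is a bijection between \(\{\la:\la_1\le N\}\) and \(\{\be:\ell(\be)\le N\}\), this gives
\begin{equation*}
  \sum_{\la:\la_1\le N}s_\la(\vx)
  =\sum_{\be_1\ge\dots\ge\be_N\ge0}\det_{1\le i,j\le N}\bigl(e_{\be_i-i+j}(\vx)\bigr)
  =\sum_{a_1>a_2>\dots>a_N\ge0}\det_{1\le i,j\le N}\bigl(e_{a_i+j-N}(\vx)\bigr),
\end{equation*}
the last equality by the substitution \(a_i=\be_i+N-i\). (All rearrangements are legitimate in the completed ring of symmetric functions, since each Schur function of a given degree occurs in only finitely many terms.)

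Next I would interpret the right-hand side combinatorially. With a standard lattice-path model for the elementary symmetric functions, the Lindstr\"om--Gessel--Viennot lemma expresses \(\det_{1\le i,j\le N}(e_{a_i+j-N}(\vx))\) as the generating function for families of \(N\) pairwise non-intersecting paths from sources encoding \(a_1,\dots,a_N\) to fixed sinks encoding \(1,\dots,N\), so that summing over \(a_1>\dots>a_N\ge0\) frees the source ends, subject only to their staying weakly above a horizontal ``wall'' (this is the condition \(a_i\ge0\); the strict inequalities are forced by non-intersection). Decomposing each path at its last visit to the wall and reflecting the initial part across it (the Gessel--Zeilberger reflection principle), and then folding the picture about the wall so as to pair path \(i\) with path \(N+1-i\), I expect the sum to collapse to a \(w\times w\) determinant. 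A folded pair is a path that first moves away from the wall, contributing some \(e_n(\vx)\), and then returns, contributing some \(e_{n+r}(\vx)\), hence has weight exactly \(f_r(\vx)=\sum_{n\in\Z}e_n(\vx)e_{n+r}(\vx)\) as in \eqref{eq:f_k}; and the reflection turns the \((i,j)\)-entry of the resulting \(w\times w\) matrix into one of shape \(f_{i-j}(\vx)\mp f_{i+j}(\vx)\) or \(f_{i-j}(\vx)+f_{i+j-1}(\vx)\). When \(N=2w\) all paths pair up and the determinant of \eqref{eq:BK_even1} results, the shift \(i+j-1\) recording that the wall lies halfway between two lattice lines; when \(N=2w+1\) one central path is left unpaired, runs from the wall to its sink through a single \(e\)-segment, and so contributes the scalar \(\sum_{k\ge0}e_k(\vx)\), the remaining \(w\) pairs producing the determinant of \eqref{eq:BK_odd1}.

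The main obstacle is the bookkeeping in this last step: choosing sources, sinks and step weights so that the entire sum over \(\{\la:\la_1\le N\}\) is captured, verifying that the reflection applies cleanly at the wall, and checking that the folding yields \emph{precisely} the stated signs, the index shift \(i+j\) versus \(i+j-1\), and the prefactor \(\sum_{k\ge0}e_k(\vx)\) in the odd case. A purely algebraic alternative would be to evaluate \(\sum_{a_1>\dots>a_N\ge0}\det_{1\le i,j\le N}(e_{a_i+j-N}(\vx))\) directly through a sum-of-minors formula --- rewriting a sum of maximal minors as a single determinant or Pfaffian --- and then to split the outcome by the same even/odd pairing; but the lattice-path argument is what makes the dichotomy, and the appearance of \(f_r(\vx)\), transparent.
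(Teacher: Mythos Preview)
The paper does not actually give its own proof of Theorem~\ref{thm:BK-intro}: it is quoted as a classical fact (with attribution deferred to~\cite{HKKO}), and the only argument the paper offers is the remark after Theorem~\ref{thm:row_Goulden} that one recovers~\eqref{eq:BK_odd1} and~\eqref{eq:BK_even1} by specializing $u=1$ in~\eqref{eq:odd+} and~\eqref{eq:even+} and simplifying with Lemma~\ref{lem:sum_det}. That route rests on the minor summation formula (Theorem~\ref{thm:IsWa}) plus the Gordon-type Pfaffian-to-determinant reductions of Lemma~\ref{lem:Gordon}. Your proposal goes instead through the dual Jacobi--Trudi identity, Lindstr\"om--Gessel--Viennot, and a reflection/folding argument at a wall. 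This is a genuinely different, and more classical, line: it is essentially the approach of Gordon--Houten~\cite{Gordon2,Gordon5} and Goulden~\cite{Goulden1992}, and is close in spirit to the ``non-crossing two-rowed array'' proofs in~\cite{KratAQ}. What your approach buys is a direct combinatorial explanation of why $f_r(\vx)=\sum_n e_n e_{n+r}$ appears (a path that goes out and comes back), whereas the paper's Pfaffian machinery is more algebraic but yields the refined $u$-identities for free.

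That said, your proposal is an outline, not a proof, and the step you flag as ``bookkeeping'' is where all the content lies. Two specific points deserve care. First, the ``folding'' you describe --- pairing path $i$ with path $N+1-i$ after a reflection --- is not quite the mechanism in the literature; what actually happens (see~\cite{Gordon5} or Lemma~\ref{lem:Gordon} here) is that the free-endpoint determinant becomes a Pfaffian $\Pf_{1\le i<j\le N}(z_{j-i})$ with $z_r=\sum_{k\ge1}(f_{r-k}-f_{r+k})$ or a close variant, and it is Gordon's Pfaffian-to-determinant identity that produces the $w\times w$ determinant; the parity of $N$ enters there, not through an informal ``central unpaired path''. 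Second, the precise signs and index shifts --- $f_{i-j}-f_{i+j}$ in the odd case versus $f_{i-j}+f_{i+j-1}$ in the even case --- do not fall out of a single reflection; in the lattice-path proofs they come from a reflection at the wall \emph{together with} a separate involution on paths touching the wall (compare the proofs of~\eqref{eq:UD-lem0}--\eqref{eq:UD-lem4} in Section~\ref{sec:comb-interpr}). As written, your sketch does not show why the even case acquires a plus sign and a shift by~$1$ while the odd case acquires a minus sign and the factor $e(\vx)$; you would need to make the boundary involution explicit to nail this down.
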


The focus of the present article is on {\it refinements} of the above
bounded Littlewood identities and their combinatorics.
In these refinements the number of
odd rows or odd columns of the partitions over which the summation
of Schur functions is taken is fixed. Here, an \emph{odd row}
 (respectively \emph{odd column}) means a row (respectively column) of odd length.
Indeed, there is already a
substantial literature on such refinements. It is our goal
(1) to provide an overview of the relevant results that are scattered
over the literature, (2) to exhibit the connections between the
various existing results, and
(3) to present new results, in particular
combinatorial interpretations of the right-hand sides of such
refined bounded Littlewood identities in terms of up-down tableaux.

\medskip
In the remainder of this introduction, we provide an overview over
the material that we present in this article. As already indicated,
all relevant definitions are given in Section~\ref{sec:pre}.

\medskip
{\sc Overview --- Section~\ref{sec:rowGoulden}: column refinements of
Theorem~\ref{thm:BK-intro}.}
To begin with, we
recall Goulden's (row) refinements~\cite{Goulden1992}
of the two bounded Littlewood identities
in Theorem~\ref{thm:BK-intro}.
In order to state these,
let \( r(\lambda) \) (respectively~\( c(\lambda) \)) denote the number of odd
rows (respectively~odd columns) of \(\lambda \). 

\begin{thm}[\sc {\cite[\sc Theorems~2.4 and~2.6]{Goulden1992}}]\label{thm:Goulden}
  For integers \( w\geq 1 \) and \( k \geq 0  \), we have
  \begin{equation}\label{eq:Goulden2m+1,k}
\underset{r(\lambda)=k}
    {\sum_{\lambda:\lambda_1\le 2w+1}} s_{\lambda}(\vx)
    =e_k(\vx) \det_{1\le i,j\le w} \left( f_{i-j}(\vx)-f_{i+j}(\vx) \right)
   \end{equation}
   and
   \begin{equation} 
    \label{eq:Goulden2m,k}
\underset{r(\lambda)=k}
    {\sum_{\lambda:\lambda_1 \le 2w}} s_{\lambda}(\vx)
    =\det_{1\le i,j\le w}
      \left(
       \begin{cases}
        f_{i-j}(\vx)-f_{i+j}(\vx), &\mbox{if \( 1 \le i \le w-1 \)} \\
        f_{i-j+k}(\vx)-f_{i+j+k}(\vx), &\mbox{if \( i=w \)}
       \end{cases}
      \right).
   \end{equation}
  In particular, 
  \begin{equation}
    \label{eq:Goulden2m,0}
\underset{r(\lambda)=0}
    {\sum_{\lambda:\lambda_1\le 2w+1}} s_{\lambda}(\vx)
    =\underset{r(\lambda)=0}
    {\sum_{\lambda:\lambda_1\le 2w}} s_{\lambda}(\vx)
    =\det_{1\le i,j\le w} \left( f_{i-j}(\vx)-f_{i+j}(\vx) \right),
  \end{equation}
with $f_r(\vx)$ as given in \eqref{eq:f_k}.
\end{thm}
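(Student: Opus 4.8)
The plan is to deduce Theorem~\ref{thm:Goulden} from Theorem~\ref{thm:BK-intro} by extracting the homogeneous-degree information hidden in the latter. The key observation is that in the two classical bounded Littlewood identities the ``extra'' elementary-symmetric factor $e_k(\vx)$ on the right-hand side of \eqref{eq:BK_odd1} is precisely the generating function that keeps track of the number of odd rows: one wants to show that the degree-$k$ part in $e_\bullet(\vx)$ of the Schur-function sum, suitably interpreted, corresponds to partitions with $r(\lambda)=k$. Concretely, I would introduce an auxiliary variable and work with $\sum_{\lambda:\lambda_1\le 2w+1} q^{r(\lambda)} s_\lambda(\vx)$, aiming to prove the ``$q$-refined'' identity
\begin{equation*}
  \sum_{\lambda:\lambda_1\le 2w+1} q^{r(\lambda)} s_\lambda(\vx)
  = \sum_{k\ge 0} q^k e_k(\vx)\cdot
  \det_{1\le i,j\le w}\bigl(f_{i-j}(\vx)-f_{i+j}(\vx)\bigr),
\end{equation*}
from which \eqref{eq:Goulden2m+1,k} follows by comparing coefficients of $q^k$, and \eqref{eq:Goulden2m,0} follows by further specialising to the degree-$0$ (i.e.\ $q^0$) term after noting that the $\lambda_1\le 2w$ and $\lambda_1\le 2w+1$ sums agree in that term. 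The statistic $r(\lambda)$ has a clean symmetric-function meaning: a Schur function $s_\lambda$ has a nonzero coefficient of $e_k$ in its expansion via the Jacobi--Trudi/Kostka-type relations only in degrees compatible with $r(\lambda)$, and more usefully, $r(\lambda)$ equals the number of odd parts of $\lambda$, which is governed by the behaviour of $s_\lambda$ under the substitution tracking parity.

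The main technical engine I expect to use is the combinatorial model behind Theorem~\ref{thm:BK-intro} itself: the right-hand sides are built from the functions $f_r(\vx)=\sum_{n}e_n(\vx)e_{n+r}(\vx)$, which are the generating functions for pairs of column-strict fillings, and the Lindstr\"om--Gessel--Viennot lemma expresses the determinant $\det(f_{i-j}-f_{i+j})$ as a signed sum over families of nonintersecting lattice paths (equivalently, as an oscillating/up-down tableau generating function). In that model the parameter $k$ in $e_k(\vx)$ corresponds to a distinguished first (or last) step, and the number of odd rows of the associated partition $\lambda$ is read off from the endpoints of the path family. So the strategy is: (i) rewrite the left-hand side of \eqref{eq:Goulden2m+1,k} using the Jacobi--Trudi expansion of $s_\lambda$ in the $e$-basis (dual Jacobi--Trudi), turning $\sum_{\lambda:\lambda_1\le 2w+1,\,r(\lambda)=k}s_\lambda$ into a determinant or signed path sum; (ii) identify that signed path sum with $e_k(\vx)$ times the Catalan-type determinant $\det(f_{i-j}-f_{i+j})$, via a bijection or a sign-reversing involution on the path families that do not respect the odd-row count; (iii) handle the even case \eqref{eq:Goulden2m,k} the same way, where now prescribing $r(\lambda)=k$ shifts the indices in the last row of the determinant by $k$ rather than producing a clean $e_k$ prefactor — this asymmetry between \eqref{eq:Goulden2m+1,k} and \eqref{eq:Goulden2m,k} is exactly the boundary effect of whether the bounding column length $2w+1$ or $2w$ is odd or even.

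The hard part will be step~(ii): showing that imposing $r(\lambda)=k$ does not merely restrict the sum but actually factors it as $e_k(\vx)$ times the full $r(\lambda)=0$ determinant, independently of $k$ (for the odd-bound case). Naively one expects the $r(\lambda)=k$ contribution to be some $k$-dependent minor; the content of Goulden's theorem is that all these minors collapse to the same determinant up to the scalar symmetric function $e_k$. I would attack this by the Desnanot--Jacobi / Dodgson condensation route or by a careful column-operations argument on the Jacobi--Trudi-type matrix: the rows indexed by ``odd-part'' data contribute, after antisymmetrisation and the reflection $f_{i-j}-f_{i+j}$ coming from the $\lambda_1\le 2w+1$ restriction (a type-$C$/type-$D$ Weyl-denominator phenomenon), a single extra row whose only surviving effect is multiplication by $e_k$. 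For the even case \eqref{eq:Goulden2m,k}, the analogous computation leaves a genuine index shift in the bottom row because there is no ``half-integer'' symmetrisation available to absorb it. The fallback, if the direct determinant manipulation proves unwieldy, is to pass entirely to the nonintersecting-lattice-path picture and construct an explicit sign-reversing involution on the ``bad'' configurations (those whose associated partition has the wrong number of odd rows), which is the route I would expect the authors to take given the paper's stated emphasis on lattice-walk and up-down-tableau combinatorics.
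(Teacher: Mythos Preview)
The paper does not prove Theorem~\ref{thm:Goulden} at all: it is stated in the introduction as a known result, attributed to Goulden~\cite[Theorems~2.4 and~2.6]{Goulden1992}, and is used thereafter as input (for instance in the proofs of Theorems~\ref{thm:Goulden2}, \ref{thm:Goulden_MUDeven}, and~\ref{thm:Goulden_MUDodd}). So there is no ``paper's own proof'' to compare against.

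As for your proposal itself, it is a plan rather than a proof, and the plan has a structural gap. Your opening move is to deduce~\eqref{eq:Goulden2m+1,k} from~\eqref{eq:BK_odd1} by ``extracting homogeneous-degree information,'' but~\eqref{eq:BK_odd1} is strictly weaker than the collection of identities~\eqref{eq:Goulden2m+1,k} over all~$k$: knowing that $\sum_\lambda s_\lambda=\big(\sum_k e_k\big)\cdot D$ does not by itself tell you which partitions on the left correspond to the $e_k$-term on the right. Your proposed $q$-refinement $\sum_\lambda q^{r(\lambda)}s_\lambda$ is simply a generating-function repackaging of the theorem you want to prove, so invoking it is circular unless you supply an independent computation of that sum---which you do not. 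The later steps (dual Jacobi--Trudi, LGV, sign-reversing involutions) are the right toolkit, and your diagnosis of the ``hard part'' is accurate, but none of the concrete mechanisms you sketch (Dodgson condensation, column operations, an unspecified involution) is carried far enough to see whether it actually produces the factorisation $e_k\cdot D$ in the odd case or the row-shift in the even case. In short: plausible outline, but the central claim---that fixing $r(\lambda)=k$ cleanly pulls out a factor~$e_k$---is precisely Goulden's theorem, and nothing in the proposal establishes it.
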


We present here a refinement of Theorem~\ref{thm:BK-intro} in which
one keeps track of the number of odd {\it columns} of the partitions
over which the sum is taken.
For convenience, we put
\[
e(\vx) = \sum_{n \ge 0} e_n(\vx),
\qquad
\overline{e}(\vx) = \sum_{n \ge 0} (-1)^n e_n(\vx).
\]

\begin{thm}
\label{thm:row_Goulden}
Let \( w \) be a nonnegative integer,
and let \( u \) be an indeterminate.
\begin{enumerate}
\item[(1)]
We have
\begin{multline}\label{eq:even+}
\sum_{\lambda: \lambda_1 \le 2w}
 \left( u^{c(\lambda)} + u^{2w-c(\lambda)} \right) s_{\lambda}(\vx)
\\
 =
 \frac{1}{2}
 \sum_{k=0}^w
 \left( u^k + u^{2w-k} \right)
 \det_{1 \le i, j \le w} \left(
  \begin{cases}
   f_{i-j}(\vx) + f_{i+j-2}(\vx), &\mbox{if \(1 \le i \le w-k \)} \\
   f_{i-j+1}(\vx) + f_{i+j-1}(\vx), &\mbox{if \( w-k+1 \le i \le w \)}
  \end{cases}
 \right)
\end{multline}
and
\begin{multline}\label{eq:even-}
\sum_{\lambda:\lambda_1 \le 2w}
 \left( u^{c(\lambda)} - u^{2w-c(\lambda)} \right) s_{\lambda}(\vx)
\\
 =
e(\vx) \overline{e}(\vx) \cdot 
\sum_{k=0}^{w-1}
 \left( u^k - u^{2w-k} \right)
 \det_{1 \le i, j \le w-1} \left(
  \begin{cases}
   f_{i-j}(\vx) - f_{i+j}(\vx), &\mbox{if \( 1 \le i \le w-k-1 \)} \\
   f_{i-j+1}(\vx) - f_{i+j+1}(\vx), &\mbox{if \( w-k \le i \le w-1 \)}
  \end{cases}
 \right).
\end{multline}
\item[(2)]
We have
\begin{multline}\label{eq:odd+}
\sum_{\lambda:\lambda_1 \le 2w+1}
 \left( u^{c(\lambda)} + u^{2w+1-c(\lambda)} \right) s_{\lambda}(\vx)
\\
 =
e(\vx) \cdot
\sum_{k=0}^w
 \left( u^k + u^{2w+1-k} \right)
 \det_{1 \le i, j \le w} \left(
  \begin{cases}
   f_{i-j}(\vx) - f_{i+j-1}(\vx), &\mbox{if \( 1 \le i \le w-k \)} \\
   f_{i-j+1}(\vx) - f_{i+j}(\vx), &\mbox{if \( w-k+1 \le i \le w \)}
  \end{cases}
 \right)
\end{multline}
and
\begin{multline}\label{eq:odd-}
\sum_{\lambda:\lambda_1 \le 2w+1}
 \left( u^{c(\lambda)} - u^{2w+1-c(\lambda)} \right) s_{\lambda}(\vx)
\\
 =
\overline{e}(\vx) \cdot 
\sum_{k=0}^w
 \left( u^k - u^{2w+1-k} \right)
 \det_{1 \le i, j \le w} \left(
  \begin{cases}
   f_{i-j}(\vx) + f_{i+j-1}(\vx), &\mbox{if \( 1 \le i \le w-k \)} \\
   f_{i-j+1}(\vx) + f_{i+j}(\vx), &\mbox{if \( w-k+1 \le i \le w \)}
  \end{cases}
 \right).
\end{multline}
\end{enumerate}
Again, the series $f_r(\vx)$ are given in \eqref{eq:f_k}.
\end{thm}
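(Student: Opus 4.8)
The four identities of Theorem~\ref{thm:row_Goulden} are best proved together. Fix $N\in\{2w,2w+1\}$, write $R_N^{+}$ and $R_N^{-}$ for the right-hand sides of the ``$+$'' and ``$-$'' identities belonging to this $N$ (so $R_{2w}^{\pm}$ are the right-hand sides of \eqref{eq:even+}, \eqref{eq:even-}, and $R_{2w+1}^{\pm}$ those of \eqref{eq:odd+}, \eqref{eq:odd-}), and let $S$ be the operator $u^{k}\mapsto u^{N-k}$. One checks by inspection that $SR_N^{+}=R_N^{+}$ and $SR_N^{-}=-R_N^{-}$, while the left-hand sides obey the same rule because $S$ interchanges $u^{c(\lambda)}$ and $u^{N-c(\lambda)}$. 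Since the ``$+$'' and ``$-$'' left-hand sides add up to $2\sum_{\lambda:\lambda_1\le N}u^{c(\lambda)}s_\lambda(\vx)$, all four identities are equivalent to the single evaluation
\[
2\,\Phi_N(u;\vx)=R_N^{+}+R_N^{-},\qquad
\Phi_N(u;\vx):=\sum_{\lambda\,:\,\lambda_1\le N}u^{c(\lambda)}s_\lambda(\vx),
\]
the ``$+$'' and ``$-$'' pieces being recovered from it by applying $S$. So everything reduces to one closed-form evaluation of $\Phi_N(u;\vx)$.

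To get at it, pass to conjugates: for $\mu=\lambda'$ the condition $\lambda_1\le N$ becomes $\ell(\mu)\le N$ ($\mu$ has at most $N$ parts), and $c(\lambda)$ is the number of odd parts of $\mu$. As $\lambda_1\le N$, the dual Jacobi--Trudi identity gives $s_\lambda(\vx)=\det_{1\le i,j\le N}\bigl(e_{\mu_i-i+j}(\vx)\bigr)$ ($\mu$ padded by zeros, $e_k:=0$ for $k<0$), so that
\[
\Phi_N(u;\vx)=\sum_{\mu\,:\,\ell(\mu)\le N}u^{\#\{i\,:\,\mu_i\text{ odd}\}}\det_{1\le i,j\le N}\bigl(e_{\mu_i-i+j}(\vx)\bigr).
\]
Substituting $a_i=\mu_i+N-i$ turns this into a sum over all strictly decreasing tuples $a_1>\dots>a_N\ge0$ of $\det_{1\le i,j\le N}\bigl(e_{a_i-N+j}(\vx)\bigr)$, weighted by $u^{\#\{i\,:\,a_i-N+i\text{ odd}\}}$. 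By the Lindstr\"om--Gessel--Viennot lemma, $\det_{1\le i,j\le N}(e_{a_i-N+j}(\vx))$ is the $\vx$-weight generating function of families of $N$ nonintersecting lattice paths with fixed sources and with sinks encoded by $a_1,\dots,a_N$; hence $\Phi_N(u;\vx)$ is the same generating function with \emph{free} sinks, each path acquiring an extra factor $u$ when it ends at a height of the ``wrong'' parity.

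Now the right-hand sides $R_N^{\pm}$ are $w\times w$ determinants (for \eqref{eq:even-}, of size $w-1$) whose entries superpose a Toeplitz term $f_{i-j}(\vx)$ and a Hankel term built from $f_{i+j-c}(\vx)$, and $f_r(\vx)=\sum_n e_n(\vx)e_{n+r}(\vx)$ is manifestly quadratic in the $e$'s. This is the fingerprint of a Cauchy--Binet contraction pairing the $N$ path-families into $w$ of them (plus, for odd $N$, one unpaired family): a matched pair fuses into a single V-shaped path whose weight, summed over the free apex, is precisely an $f_r(\vx)$, and the Hankel contributions come from reflecting such a V-path off the wall $a_i\ge0$ by the reflection principle --- which is exactly the classical proof of Theorem~\ref{thm:BK-intro}, recovered here at $u=1$. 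The $u$-weight is the new ingredient: slicing the free-sink summation by the parities of the $a_i$ splits each $f_r(\vx)$ into its even- and odd-subscript halves; the pieces in which a whole V-path (or the unpaired family) sits at a single parity collapse, through $\sum_{n\ge0}(\pm1)^ne_n(\vx)=e(\vx)$ or $\overline{e}(\vx)$, to the boundary factors $e(\vx)$ in \eqref{eq:odd+}, $\overline{e}(\vx)$ in \eqref{eq:odd-}, and $e(\vx)\overline{e}(\vx)=\prod_i(1-x_i^2)$ in \eqref{eq:even-}; and the remaining parity-mixed pieces reassemble into the two blocks of rows of the stated determinants, the bottom $k$ rows repeating the pattern of the top $w-k$ rows with both subscripts of each $f$ raised by $1$ --- the ``$+1$'' recording that those $k$ columns have been lengthened by a cell to become odd --- where $k$ is the number of odd columns. (The same picture accounts for the sizes and prefactors: the $+$ combination keeps all $w$ fused pairs; the $-$ combination either absorbs one fused pair into the $\prod_i(1-x_i^2)$ factor, shrinking the determinant to size $w-1$, or, for odd $N$, expends the unpaired family, producing the $e(\vx)$ or $\overline{e}(\vx)$ prefactor.)

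The step I anticipate wrestling with is exactly this parity-refined folding-and-reflection: one must verify that the sign cancellations underlying the Cauchy--Binet contraction and the reflection principle survive the parity slicing intact, and --- the delicate point --- that the parameter $k=c(\lambda)$ attaches to the \emph{bottom} $k$ rows of the determinant rather than to some other $k$-element set of rows, which pins down the normal form one must adopt for the sink summation before contracting. After the block determinants are in place, the remainder is routine: elementary row and column operations reduce the entries to the compact forms $f_{i-j}(\vx)\pm f_{i+j-c}(\vx)$ displayed in \eqref{eq:even+}--\eqref{eq:odd-}, and the outcome is readily cross-checked by specialising $u=1$, which recovers the classical bounded Littlewood identities of Theorem~\ref{thm:BK-intro} after a short rearrangement.
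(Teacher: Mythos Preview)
Your plan is a heuristic sketch rather than a proof, and the route you outline is \emph{not} the one the paper takes. The paper's argument is entirely algebraic: it expresses the weight $u^{c(\lambda)}\pm u^{N-c(\lambda)}$ as a sub-Pfaffian of an explicit skew-symmetric matrix (Proposition~\ref{prop:subPf}), combines this with the dual Jacobi--Trudi minors (Lemma~\ref{lem:minor}), and applies the Ishikawa--Wakayama minor summation formula (Theorem~\ref{thm:IsWa}) to write the left-hand side as a single $2w\times 2w$ (or $(2w+2)\times(2w+2)$) Pfaffian. That Pfaffian is then reduced to a $w\times w$ determinant by Gordon-type identities (Lemma~\ref{lem:Gordon}), massaged by row operations, and finally expanded into the sum over~$k$ via the elementary Lemma~\ref{lem:sum_det}. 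No lattice paths, no reflection principle, no Cauchy--Binet step enter.

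The gap in your proposal is precisely the step you flag yourself: the ``parity-refined folding-and-reflection'' that is supposed to collapse an $N\times N$ free-sink path generating function, weighted by $u^{\#\text{odd parts}}$, into a sum over~$k$ of $w\times w$ determinants with the bottom~$k$ rows shifted. Your description of this step is purely phenomenological --- you identify features of the target formula (quadratic $e$-dependence, Hankel pieces, the boundary factors $e(\vx)$, $\overline e(\vx)$) and assert that the expected mechanisms (pairing into V-paths, reflection, parity slicing) produce them, but you give no involution, no sign analysis, and no argument that the parity weight is compatible with the cancellations. In the unweighted case ($u=1$) this folding is already nontrivial (it is essentially Gordon's Pfaffian-to-determinant reduction, handled algebraically in the paper); with the $u$-weight present, the combinatorics is genuinely delicate, and indeed the paper's final section explicitly poses the existence of bijective proofs of the equivalent Theorem~\ref{thm:row_Goulden2-2} as an open question. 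So as it stands your proposal does not constitute a proof: the central mechanism is asserted, not established.
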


If we put $u=1$ in \eqref{eq:even+} and~\eqref{eq:odd+} and use Lemma~\ref{lem:sum_det} below 
and column operations, we recover the bounded Littlewood identities~\eqref{eq:BK_even1} and~\eqref{eq:BK_odd1}, respectively.
(If we put $u=1$ in~\eqref{eq:even-} and~\eqref{eq:odd-}, then both sides become zero.)
We note that the $u=0$ cases of Theorem~\ref{thm:row_Goulden} were first proved in 
\cite[Theorem~2.3(3)]{Okada1998} 
as classical group character identities of rectangular shape 
(see also Section~\ref{sec:nearlyrect}).
Cylindric versions were given in the authors' previous paper~\cite{HKKO}.

We prove Theorem~\ref{thm:row_Goulden} in
Section~\ref{sec:rowGoulden}. Our main tool is the {\it minor
  summation formula} of Ishikawa and
Wakayama~\cite[Theorem~1]{IsWaAA}; see Theorem~\ref{thm:IsWa}.

\medskip
{\sc Overview --- Section~\ref{sec:nearlyrect}: equivalence of
  Theorem~\ref{thm:row_Goulden} and identities 
  for classical group characters of nearly rectangular shape.}
As it turns out, the refined bounded Littlewood identities in
Theorem~\ref{thm:row_Goulden} are equivalent to identities for
classical group characters obtained by the third author
in~\cite{Krattenthaler1998}. We explain this (non-trivial) equivalence
in Section~\ref{sec:nearlyrect}.

\medskip
{\sc Overview --- Section~\ref{sec:skew}: skewing operators.}
Here we present new formulations of the bounded
Littlewood identities in Theorem~\ref{thm:BK-intro} 
and their refinements in Theorems~\ref{thm:Goulden} and \ref{thm:row_Goulden}
in terms of {\it skewing operators}. In order to state these, we recall that,
given a symmetric function $f(\vx)$, the associated skewing operator
$f^\perp$ is, by definition, the adjoint of multiplication by $f(\vx)$
with respect to the {\it Hall inner product}, that is,
$\langle f^\perp r_1(\vx),r_2(\vx) \rangle =\langle
r_1(\vx),f(\vx)r_2(\vx) \rangle,$   
for all symmetric functions \( r_1(\vx)\) and \(r_2(\vx)\).
We refer the reader again to Section~\ref{sec:pre} for full definitions.
In the theorems below, we only need the skewing operator associated
with the first power-sum symmetric function $p_1(\vx)=x_1+x_2+\cdots$.

\begin{thm}\label{thm:BK2}
  For a nonnegative integer \( w \),
  \begin{align}
  \label{eq:oddpop}  
     \sum_{\la:\lambda_1\le 2w+1} s_{\lambda}(\vx)
    &= e(\vx) \det_{1\le i,j \leq w}\left((\pop)^{i+j-2} (f_{0}(\vx)-f_{2}(\vx))\right),\\
  \label{eq:evenpop}
     \sum_{\la:\lambda_1\le 2w} s_{\lambda}(\vx) 
     &= \det_{1\le i,j \leq w}\left((\pop)^{i+j-2}(f_0(\vx)+f_1(\vx))\right).
  \end{align} 
\end{thm}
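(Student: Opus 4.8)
The plan is to deduce Theorem~\ref{thm:BK2} from the determinantal identities of Theorem~\ref{thm:BK-intro} by converting the explicit shifts $i\pm j$ in the determinant entries into iterated applications of the skewing operator $\pop$. The key fact I would use is the standard action of $\pop$ on the generating series~$f_r(\vx)$ coming from~\eqref{eq:f_k}: since $p_1^\perp e_n(\vx) = e_{n-1}(\vx)$, we get from~\eqref{eq:f_k} that
\[
\pop f_r(\vx) = \sum_{n\in\Z}\bigl(e_{n-1}(\vx)e_{n+r}(\vx)+e_n(\vx)e_{n+r-1}(\vx)\bigr) = f_{r+1}(\vx)+f_{r-1}(\vx),
\]
i.e.\ $\pop$ acts on the index $r$ of $f_r$ as the operator ``$r\mapsto r+1$ plus $r\mapsto r-1$''. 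Iterating, $(\pop)^m f_0(\vx)$ is a $\ZZ$-linear combination of the $f_j(\vx)$ with the coefficients being the ballot-type numbers $\binom{m}{(m-j)/2}-\binom{m}{(m-j)/2-1}$ (the reflection of a simple random walk), and similarly for $(\pop)^m$ applied to any single $f_r$. The crucial point is that the matrix recording the passage from $(f_0,f_1,f_2,\dots)$ to $((\pop)^0 g,(\pop)^1 g,(\pop)^2 g,\dots)$, for the appropriate seed $g$, is \emph{unitriangular}.

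Concretely, for~\eqref{eq:evenpop} I would take $g(\vx)=f_0(\vx)+f_1(\vx)$ and show, by column reduction inside the determinant on the right-hand side of~\eqref{eq:BK_even1} (or equivalently a change of basis on the space spanned by the relevant $f_r$'s), that
\[
\det_{1\le i,j\le w}\bigl(f_{i-j}(\vx)+f_{i+j-1}(\vx)\bigr)
=\det_{1\le i,j\le w}\bigl((\pop)^{i+j-2} g(\vx)\bigr).
\]
The mechanism: the entry in row~$i$ is a function of $i-j$ and $i+j$; writing $(\pop)^{i+j-2}g$ and expanding via the binomial-difference coefficients above, the ``extra'' terms beyond $f_{i-j}+f_{i+j-1}$ are lower-order (smaller $i+j$) and hence removable by elementary column operations — each step $j\to$ combinations of $j'<j$ preserves the determinant. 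One checks the base cases $w=0,1$ directly, and the inductive step amounts to verifying that the new last column $(\pop)^{i+w-2}g$ differs from the old last column $f_{i-w}+f_{i+w-1}$ by a linear combination of the earlier columns; this is exactly the statement that $(\pop)^{m}g-(f_{?}+f_{?})$ lies in the span of $(\pop)^{<m}g$, which follows from the unitriangularity. The identity~\eqref{eq:oddpop} is handled in the same way, with seed $g(\vx)=f_0(\vx)-f_2(\vx)$ and the overall prefactor $e(\vx)$ carried along unchanged from~\eqref{eq:BK_odd1}: note $f_0-f_2 = \pop(\cdots)$-type combinations reproduce $f_{i-j}-f_{i+j}$ after the same column cleanup, the minus signs being consistent with the ``$-f_{i+j}$'' pattern.

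The main obstacle I anticipate is purely bookkeeping: making the column-reduction argument airtight requires identifying precisely which linear combination of columns $1,\dots,j-1$ must be subtracted from column~$j$ at each stage, and checking that the triangular change-of-basis matrix between $\{f_{i-j}+f_{i+j-1}\}_{j}$ (resp.\ $\{f_{i-j}-f_{i+j}\}_j$) and $\{(\pop)^{i+j-2}g\}_j$ really is unipotent uniformly in~$i$ — the subtlety being that the coefficient of $f_{i+j}$ versus $f_{i-j}$ in $(\pop)^{i+j-2}g$ depends on parity, so the ``$+$'' case (entries $f_{i-j}+f_{i+j-1}$, an odd shift apart) and the ``$-$'' case (entries $f_{i-j}-f_{i+j}$, an even shift apart) need slightly different seeds, and this is exactly why~\eqref{eq:oddpop} uses $f_0-f_2$ rather than $f_0-f_1$ and additionally carries the factor $e(\vx)$. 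Once the unitriangularity is pinned down, the determinant identities are immediate and Theorem~\ref{thm:BK2} follows. An alternative, cleaner route — which I would mention as a remark — is to prove~\eqref{eq:oddpop}–\eqref{eq:evenpop} \emph{directly}, bypassing Theorem~\ref{thm:BK-intro}: expand $\det((\pop)^{i+j-2}g)$ by the Lindström–Gessel–Viennot lemma as a signed sum over nonintersecting lattice paths encoding the $\pop$-walks, and match it to the path model for $\sum_{\lambda_1\le 2w} s_\lambda$ (resp.\ the odd case) via a reflection argument; this is in the spirit of the up-down-tableau combinatorics promised later in the paper, and may in fact be the argument the authors intend.
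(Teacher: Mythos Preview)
Your approach is essentially the same as the paper's: derive the action $\pop f_r = f_{r-1}+f_{r+1}$ (the paper's Lemma~\ref{lem:pop}), then use elementary row/column operations to pass from the determinants in Theorem~\ref{thm:BK-intro} to those involving $(\pop)^{i+j-2}$. The paper resolves the bookkeeping worry you flag by splitting the passage into two stages rather than one: first column operations convert $f_{i-j}\pm f_{i+j}$ (resp.\ $f_{i-j}+f_{i+j-1}$) into $(\pop)^{j-1}(f_{i-1}\pm f_{i+1})$ (resp.\ $(\pop)^{j-1}(f_{i-1}+f_i)$), and then, since $(\pop)^{j-1}$ is linear, the identical argument applied to the rows converts $f_{i-1}\pm f_{i+1}$ into $(\pop)^{i-1}(f_0\pm f_2)$; the explicit expansion (their Lemma~\ref{lem:col_op}) makes the unitriangularity ``uniformly in $i$'' manifest, since the extra terms are of the form $f_{i-(j-2r)}\pm f_{i+(j-2r)}$ with $1\le r\le j-1$, i.e.\ exactly entries of earlier columns.
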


We prove as well analogous formulations for the refinements in
Theorems~\ref{thm:Goulden} and \ref{thm:row_Goulden}.

\begin{thm} \label{thm:Goulden2}
  For nonnegative integers \( w \) and \( k \),
 \begin{align}
 \label{eq:oddref_k}
\underset{r(\lambda)=k}
 {\sum_{\lambda:\lambda_1\le 2w+1}}s_{\lambda}(\vx)
   &=e_k(\vx)\det_{1\le i,j \leq w}\left( (\pop)^{i+j-2} (f_{0}(\vx)-f_{2}(\vx)) \right),\\
  \label{eq:evenref_k}
\underset{r(\lambda)=k}
  {\sum_{\lambda:\lambda_1\le 2w}}s_{\lambda}(\vx)
   &=\det_{1\le i,j\le w} 
   \left( (\pop)^{j-1} (f_{i+k\delta_{i,w}-1}(\vx)-f_{i+k\delta_{i,w}+1}(\vx)) \right),
  \end{align}  
  where \( \delta_{i,j}=1 \) if \( i=j \) and \( \delta_{i,j}=0 \) otherwise.
\end{thm}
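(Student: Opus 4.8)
The plan is to deduce Theorem~\ref{thm:Goulden2} from Theorem~\ref{thm:Goulden} by rewriting the entries $f_{i-j}(\vx)-f_{i+j}(\vx)$ (and their shifted variants) in terms of iterated applications of the skewing operator $\pop$. The key observation is an operator identity: for every integer $r$,
\begin{equation*}
  \pop\bigl(f_{r-1}(\vx)-f_{r+1}(\vx)\bigr)=f_{r-2}(\vx)-f_{r+2}(\vx),
  \qquad
  \pop\,f_r(\vx)=f_{r-1}(\vx)+f_{r+1}(\vx),
\end{equation*}
which follow from $\pop e_n(\vx)=e_{n-1}(\vx)$ together with the definition \eqref{eq:f_k} of $f_r(\vx)$. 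The first of these gives, by induction, $(\pop)^{m}\bigl(f_0(\vx)-f_2(\vx)\bigr)=f_{-m}(\vx)-f_{m+2}(\vx)=f_m(\vx)-f_{m+2}(\vx)$ using $f_{-r}=f_r$, and hence the matrix whose $(i,j)$ entry is $(\pop)^{i+j-2}\bigl(f_0(\vx)-f_2(\vx)\bigr)$ is exactly the matrix with $(i,j)$ entry $f_{i+j-2}(\vx)-f_{i+j}(\vx)$. To match Goulden's determinant $\det_{1\le i,j\le w}(f_{i-j}(\vx)-f_{i+j}(\vx))$, I would apply the antisymmetry $f_{i-j}=f_{j-i}$ to replace $f_{i-j}$ by $f_{j-i}$, so that the $(i,j)$ entry becomes $f_{j-i}(\vx)-f_{i+j}(\vx)$, and then perform the substitution $i\mapsto w+1-i$ (reversing the row order, which only changes the determinant by the sign $(-1)^{\binom{w}{2}}$): the $(i,j)$ entry turns into $f_{j+i-w-1}(\vx)-f_{w+1-i+j}(\vx)$, and a shift of the column index or a comparison with the target form $f_{i+j-2}-f_{i+j}$ completes the identification. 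This establishes \eqref{eq:oddref_k}; the case $k=0$ plus the $e_k(\vx)$ prefactor come for free from \eqref{eq:Goulden2m+1,k}.

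For \eqref{eq:evenref_k} the strategy is the same but one column at a time rather than a global row-shift. Starting from Goulden's determinant \eqref{eq:Goulden2m,k}, whose rows $1\le i\le w-1$ have entry $f_{i-j}(\vx)-f_{i+j}(\vx)$ and whose last row $i=w$ has entry $f_{i-j+k}(\vx)-f_{i+j+k}(\vx)$, I would again use $f_{i-j}=f_{j-i}$ to rewrite the generic entry as $f_{j-i}(\vx)-f_{i+j}(\vx)$ and then recognise, via the operator identity above, that $f_{j-i}(\vx)-f_{i+j}(\vx)=(\pop)^{j-1}\bigl(f_{1-i}(\vx)-f_{i+1}(\vx)\bigr)=(\pop)^{j-1}\bigl(f_{i-1}(\vx)-f_{i+1}(\vx)\bigr)$, again using $f_{-r}=f_r$. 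This is precisely the claimed entry $(\pop)^{j-1}\bigl(f_{i-1}(\vx)-f_{i+1}(\vx)\bigr)$ for $i\le w-1$, i.e.\ the $\delta_{i,w}$ term vanishes. For $i=w$ one checks in the same way that $f_{j-w+k}(\vx)-f_{w+j+k}(\vx)=(\pop)^{j-1}\bigl(f_{w-1+k}(\vx)-f_{w+1+k}(\vx)\bigr)$, which is the entry $(\pop)^{j-1}\bigl(f_{i+k\delta_{i,w}-1}(\vx)-f_{i+k\delta_{i,w}+1}(\vx)\bigr)$ at $i=w$. Since each row is obtained from the corresponding row of Goulden's matrix by the same invertible relabelling, the two determinants agree, and \eqref{eq:evenref_k} follows.

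The only genuinely delicate point is the bookkeeping of signs and index shifts when converting between the "$f_{i-j}-f_{i+j}$" normalisation of Theorem~\ref{thm:Goulden} and the "$(\pop)^{i+j-2}$" normalisation of Theorem~\ref{thm:Goulden2}: one must be careful that the reversal $i\mapsto w+1-i$ in the odd case (contributing $(-1)^{\binom{w}{2}}$) is compensated correctly, and that in the even case no such reversal is needed because the $(\pop)^{j-1}$ form acts columnwise and leaves the row index alone. I expect this sign/shift reconciliation to be the main obstacle; everything else is a direct consequence of the single operator identity $\pop e_n(\vx)=e_{n-1}(\vx)$ applied to \eqref{eq:f_k}. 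Once Theorem~\ref{thm:Goulden2} is in place, Theorem~\ref{thm:BK2} is simply the $k=0$ specialisation together with the observation $(\pop)^{i+j-2}(f_0+f_1)$ reproduces \eqref{eq:evenref_k} at $k=0$ after the same elementary manipulations, so I would present Theorem~\ref{thm:BK2} as a corollary rather than proving it separately.
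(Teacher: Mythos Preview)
Your key inductive claim is false. You assert that
\[
(\pop)^{m}\bigl(f_0(\vx)-f_2(\vx)\bigr)=f_{-m}(\vx)-f_{m+2}(\vx),
\]
but this fails already at $m=2$. The single-step identity $\pop(f_{r-1}-f_{r+1})=f_{r-2}-f_{r+2}$ is correct, because the two $f_r$'s cancel. However, after one application you obtain $f_{-1}-f_3$, which is \emph{not} of the form $f_{r-1}-f_{r+1}$ (the indices now differ by~$4$, not~$2$), so you cannot iterate. A direct computation gives
\[
(\pop)^2(f_0-f_2)=\pop(f_{-1}-f_3)=(f_{-2}+f_0)-(f_2+f_4)=f_{-2}-f_4+(f_0-f_2),
\]
which has the extra term $f_0-f_2$. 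The same error invalidates your claimed identity $f_{j-i}-f_{i+j}=(\pop)^{j-1}(f_{1-i}-f_{i+1})$ in the even case: it holds for $j=1,2$ but not for $j\ge 3$.

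The paper's Lemma~\ref{lem:col_op} gives the correct statement: $(\pop)^{j-1}(f_{i-1}-f_{i+1})$ equals $f_{i-j}-f_{i+j}$ \emph{plus} a $\ZZ$-linear combination of terms $f_{i-j'}-f_{i+j'}$ with $1\le j'\le j-1$. These extra terms do not vanish entrywise; instead one observes that they lie in the span of columns $1,\dots,j-1$ of the Goulden matrix, so they can be removed by column operations without changing the determinant. That is the actual mechanism connecting Theorem~\ref{thm:Goulden} to Theorem~\ref{thm:Goulden2}, and it is also why no sign $(-1)^{\binom{w}{2}}$ or row reversal enters: the two determinants are related by a unitriangular column transformation, not by a reindexing.
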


\begin{thm}\label{thm:row_Goulden2-2pop}
For nonnegative integers \( w \) and \( k \) such that \( 0\le k\le w \),
we have
\begin{align}
  \label{eq:schur-odd-kpop}
\underset{c(\lambda) = k}
  {\sum_{\lambda:\lambda_1 \le 2w+1}} s_{\lambda}(\vx)
  +\underset {c(\lambda) = 2w+1-k}
  {\sum_{\lambda:\lambda_1 \le 2w+1}} s_{\lambda}(\vx)
  &= e(\vx) 
   \det_{1 \le i, j \le w}
  \left( (\pop)^{j-1} (f_{i+\chi(i>w-k)-1}(\vx)-f_{i+\chi(i>w-k)}(\vx)) \right),\\
  \label{eq:schur-odd-2h+1-kpop}
  \underset {c(\lambda) = k}
  {\sum_{\lambda:\lambda_1 \le 2w+1}} s_{\lambda}(\vx)
  -\underset {c(\lambda) = 2w+1-k}
  {\sum_{\lambda:\lambda_1 \le 2w+1}} s_{\lambda}(\vx)
  &= \overline{e}(\vx) 
   \det_{1 \le i, j \le w} 
   \left( (\pop)^{j-1} (f_{i+\chi(i>w-k)-1}(\vx)+f_{i+\chi(i>w-k)}(\vx)) \right),\\
  \label{eq:schur-even-kpop}
  \underset {c(\lambda) = k}
  {\sum_{\lambda:\lambda_1 \le 2w}} s_{\lambda}(\vx)
  +\underset {c(\lambda) = 2w-k}
  {\sum_{\lambda:\lambda_1 \le 2w}} s_{\lambda}(\vx)
  &= 2^{\chi(k=w) }
 \det_{1 \le i, j \le w}    
  \left( (\pop)^{j-1}f_{i+\chi(i>w-k)-1}(\vx) \right),
 \end{align}
and, for  \( 0\le k\le w-1 \),
\begin{align}
  \label{eq:schur-even-2h-kpop}
  \underset {c(\lambda) = k}
  {\sum_{\lambda:\lambda_1 \le 2w}} s_{\lambda}(\vx)
  -\underset {c(\lambda) = 2w-k}
  {\sum_{\lambda:\lambda_1 \le 2w}} s_{\lambda}(\vx)
  &= e(\vx) \overline{e}(\vx) 
   \det_{2 \le i, j \le w} 
 \left( (\pop)^{j-2} (f_{i+\chi(i>w-k)-2}(\vx)-f_{i+\chi(i>w-k)}(\vx)) \right),
\end{align}
where $\chi(P)$ is defined to be \( 1 \) if a statement $P$ is true and $0$ otherwise.
\end{thm}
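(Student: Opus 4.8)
The plan is to deduce Theorem~\ref{thm:row_Goulden2-2pop} from Theorem~\ref{thm:row_Goulden} in two steps: first extract the appropriate power of the indeterminate~$u$ from the identities \eqref{eq:even+}--\eqref{eq:odd-}, and then rewrite each resulting determinant with $f$-entries as a determinant featuring the skewing operator~$\pop$. The one piece of algebra we use is that $\pop$ is a derivation with $\pop e_n(\vx)=e_{n-1}(\vx)$; applied to~\eqref{eq:f_k} this gives
\[
 \pop f_r(\vx)=f_{r-1}(\vx)+f_{r+1}(\vx)\qquad(r\in\ZZ),
\]
where we use the symmetry $f_{-r}(\vx)=f_r(\vx)$, so that in particular $\pop f_0(\vx)=2f_1(\vx)$.

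For the first step, abbreviate $T_m:=\sum_{\lambda:\,\lambda_1\le N,\ c(\lambda)=m}s_\lambda(\vx)$. The left-hand side of, say, \eqref{eq:odd+} (where $N=2w+1$) is $\sum_m T_m\,(u^m+u^{2w+1-m})$, whose coefficient of~$u^k$ for $0\le k\le w$ equals $T_k+T_{2w+1-k}$; on the right-hand side of \eqref{eq:odd+} the same extraction leaves only the summand $j=k$, since $2w+1-j>w\ge k$ for $0\le j\le w$. The cases \eqref{eq:odd-}, \eqref{eq:even+}, \eqref{eq:even-} are handled identically, except that in the two even cases the prefactor~$\tfrac12$ survives for $k<w$, while for $k=w$ the summand $j=w$ is the unique contribution and is counted with weight~$1$; this asymmetry is one source of the factor $2^{\chi(k=w)}$ in \eqref{eq:schur-even-kpop}. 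After this step it remains to prove, for each fixed~$k$, a determinantal identity $\det_{1\le i,j\le w}\!\bigl(a^{(i)}_j\bigr)=c_k\det_{1\le i,j\le w}\!\bigl((\pop)^{j-1}g_i\bigr)$ with $c_k\in\{1,2\}$ (and an analogue with index range $2,\dots,w$ for \eqref{eq:schur-even-2h-kpop}), where $a^{(i)}_j$ is the row-$i$ entry of the relevant determinant in Theorem~\ref{thm:row_Goulden} and $g_i$ is the corresponding ``seed'' appearing in Theorem~\ref{thm:row_Goulden2-2pop}.

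For the second step, fix~$k$. In each of the four families the row-$i$ entry can be written as $a^{(i)}_j=f_{p_i-j}(\vx)\pm f_{q_i+j}(\vx)$ for integers $p_i,q_i$ depending only on~$i$ and the identity, and one has $q_i=p_i-1$ in the two ``odd'' cases \eqref{eq:schur-odd-kpop}, \eqref{eq:schur-odd-2h+1-kpop}, $q_i=p_i-2$ in \eqref{eq:schur-even-kpop}, and $q_i=p_i$ in \eqref{eq:schur-even-2h-kpop}. Using $\pop f_r=f_{r-1}+f_{r+1}$ one checks that, for each fixed~$i$, the sequence $(a^{(i)}_j)_j$ obeys the three-term recursion $a^{(i)}_{j+1}=\pop\,a^{(i)}_j-a^{(i)}_{j-1}$, while the reflection $f_{-r}(\vx)=f_r(\vx)$ yields a boundary relation: $a^{(i)}_0=a^{(i)}_1$ or $a^{(i)}_0=-a^{(i)}_1$ according to the sign occurring in $a^{(i)}_j$ in the two odd cases, $a^{(i)}_0=a^{(i)}_2$ together with $a^{(i)}_1=2f_{p_i-1}(\vx)=2g_i$ in the case of \eqref{eq:schur-even-kpop}, and $a^{(i)}_0=0$ in the case of \eqref{eq:schur-even-2h-kpop} (after relabelling rows and columns by $i,j\mapsto i-1,j-1$, which also turns the $(w-1)\times(w-1)$ determinant on the right of \eqref{eq:even-} into $\det_{2\le i,j\le w}$). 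It follows in each case that $a^{(i)}_j=R_j(\pop)\,g_i$, where the polynomials $R_j$ satisfy $R_{j+1}(t)=tR_j(t)-R_{j-1}(t)$ with $R_1$ constant and $R_2$ monic of degree one; hence $R_j$ has degree $j-1$ with leading coefficient~$1$ for $j\ge2$, and $R_1=1$ except in the case of \eqref{eq:schur-even-kpop}, where $R_1=2$. Since the change of basis $(1,t,\dots,t^{w-1})\mapsto(R_1,\dots,R_w)$ is upper triangular with diagonal $(R_1,1,\dots,1)$, the corresponding column operations (valid over the ring in which the series $f_r(\vx)$ live) transform $\bigl(a^{(i)}_j\bigr)_{i,j}=\bigl(R_j(\pop)g_i\bigr)_{i,j}$ into $R_1\cdot\bigl((\pop)^{j-1}g_i\bigr)_{i,j}$, so that $\det_{1\le i,j\le w}\!\bigl(a^{(i)}_j\bigr)=R_1\det_{1\le i,j\le w}\!\bigl((\pop)^{j-1}g_i\bigr)$; combined with the coefficient extraction of the first step this yields the desired identity, the value $R_1=2$ for \eqref{eq:schur-even-kpop} absorbing the surviving~$\tfrac12$ to leave~$2^{\chi(k=w)}$.

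The main obstacle I anticipate is purely organizational: keeping straight, across the four families, which boundary relation coming from $f_{-r}(\vx)=f_r(\vx)$ applies, hence which ``seed''~$g_i$ and which value $R_1\in\{1,2\}$ occurs, and --- in the case of \eqref{eq:schur-even-2h-kpop} --- verifying that the shift $i,j\mapsto i-1,j-1$ really does reproduce the determinant $\det_{2\le i,j\le w}$ as displayed. Once this bookkeeping is in place, the proof is just the column-operation step above, carried out once per family. (Incidentally, the same column-operation mechanism, applied instead to Theorems~\ref{thm:BK-intro} and~\ref{thm:Goulden}, yields Theorems~\ref{thm:BK2} and~\ref{thm:Goulden2}.)
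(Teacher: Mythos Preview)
Your proposal is correct and follows essentially the same route as the paper: first pass from Theorem~\ref{thm:row_Goulden} to its coefficient-extracted form (the paper records this as Theorem~\ref{thm:row_Goulden2-2}), and then convert each determinant by column operations driven by $\pop f_r=f_{r-1}+f_{r+1}$ (the paper packages these as Lemmas~\ref{lem:pop} and~\ref{lem:col_op}). Your phrasing via the three-term recursion $a^{(i)}_{j+1}=\pop\,a^{(i)}_j-a^{(i)}_{j-1}$ and the Chebyshev-type polynomials $R_j$ is a clean uniform repackaging of exactly those column operations; the paper instead writes out only the case~\eqref{eq:schur-even-kpop} explicitly and declares the others analogous.
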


The proofs of these three theorems are found in
Section~\ref{sec:skew}. They are based on elementary properties of the
skewing operator $\pop$ when applied to the series~$f_r(\vx)$ and row
and column operations applied to the determinants in
Theorems~\ref{thm:BK-intro}, \ref{thm:Goulden}, and~\ref{thm:row_Goulden}.

\medskip
{\sc Overview --- Section~\ref{sec:SYT-bounded}:
unusual formulas for numbers of standard Young
tableaux of bounded width.}
In Section~\ref{sec:SYT-bounded}, we show that the formulas in
Theorems~\ref{thm:BK2},
\ref{thm:Goulden2}, and~\ref{thm:row_Goulden2-2pop} can be used to
derive formulas for the number of standard Young tableaux of bounded
width, some of them known but derived only recently in a completely different
manner, some of them new.

Let \( \SYT_{n,w} \) denote the set of standard Young tableaux of size \( n \) with width at most \( w \),
where the \emph{width} is the number of columns.
In order to explain the context, we have to recall Gessel's classical
result~\cite[Eq.~(22), after application of the operator~$\theta$;
  cf.\ p.~277]{Gessel1990}, which, as Gessel points out,
is implicit in papers of Gordon and Houten~\cite{Gordon2,Gordon5} and of
Bender and Knuth~\cite{BK72}. 

\begin{thm} \label{thm:Gessel}
For a nonnegative integer \( w \), 
\begin{align}
  \label{eq:Gessel1}
  \sum_{n\ge0} |\SYT_{n,2w+1}| \frac{x^n}{n!}
  &= \exp(x) \det_{1\le i,j\le w} \left( I_{i-j}(2x)-I_{i+j}(2x) \right),\\
  \label{eq:Gessel2}
  \sum_{n\ge0} |\SYT_{n,2w}| \frac{x^n}{n!}
  &= \det_{1\le i,j\le w} \left(I_{i-j}(2x)+I_{i+j-1}(2x) \right),
  \end{align}
  where 
  \[
  I_\al(2x)=\sum_{r\ge0}\binom{2r+|\al|}{r}\frac {x^{2r+|\al|}} {(2r+|\al|)!}
  \]
  is the modified Bessel function.
\end{thm}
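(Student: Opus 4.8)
The plan is to obtain the two formulas by specializing the bounded Littlewood identities of Theorem~\ref{thm:BK-intro} (or equivalently their skewing-operator reformulations in Theorem~\ref{thm:BK2}) under the principal-type specialization that sends $p_1(\vx)\mapsto x$ and $p_r(\vx)\mapsto 0$ for all $r\ge 2$ — equivalently, the \emph{exponential specialization} $\ex$ of \cite[Chapter~7]{StanBI}. Recall that under this specialization one has $\ex(s_\lambda)=\frac{f^\lambda}{n!}x^n$ for $\lambda\vdash n$, where $f^\lambda$ is the number of standard Young tableaux of shape~$\lambda$, so that
\[
\ex\Big(\sum_{\lambda:\lambda_1\le m} s_\lambda(\vx)\Big)
=\sum_{n\ge 0}\Big(\sum_{\substack{\lambda\vdash n\\ \lambda_1\le m}} f^\lambda\Big)\frac{x^n}{n!}
=\sum_{n\ge 0}|\SYT_{n,m}|\,\frac{x^n}{n!},
\]
since $\lambda_1\le m$ is exactly the condition that the \emph{width} of the conjugate shape — equivalently the number of columns of~$\lambda$ — is at most~$m$; note that $|\SYT_{n,m}|$ counts tableaux by width, and transposition is a bijection on $\SYT_n$ preserving the number of cells while swapping the bound on rows with the bound on columns, so the left-hand sides of \eqref{eq:Gessel1}–\eqref{eq:Gessel2} are correctly produced. (One should double-check which of ``$\lambda_1\le m$'' versus ``$\ell(\lambda)\le m$'' matches the stated notation $\SYT_{n,w}$; a transpose-symmetry remark handles any discrepancy.)

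First I would record the effect of $\ex$ on the building blocks $e_k(\vx)$ and $f_r(\vx)$. One has $\ex(e_k)=\frac{x^k}{k!}$, hence $\ex\big(e(\vx)\big)=\sum_{k\ge0}\frac{x^k}{k!}=\exp(x)$, which accounts for the prefactor $\exp(x)$ in \eqref{eq:Gessel1} (using the form \eqref{eq:oddpop}, or directly \eqref{eq:BK_odd1} after summing $\sum_k e_k$). Next, applying $\ex$ term-by-term to $f_r(\vx)=\sum_{n\in\Z} e_n(\vx)e_{n+r}(\vx)$ and using that $\ex$ is an algebra homomorphism gives
\[
\ex\big(f_r(\vx)\big)=\sum_{n\ge \max(0,-r)}\frac{x^n}{n!}\cdot\frac{x^{n+r}}{(n+r)!}
=\sum_{m\ge0}\frac{x^{2m+|r|}}{m!\,(m+|r|)!},
\]
after the substitution $m=\min(n,n+r)$. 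Comparing with the series definition of the modified Bessel function in the statement, $I_\alpha(2x)=\sum_{r\ge0}\binom{2r+|\alpha|}{r}\frac{x^{2r+|\alpha|}}{(2r+|\alpha|)!}=\sum_{r\ge0}\frac{x^{2r+|\alpha|}}{r!\,(r+|\alpha|)!}$, we see $\ex(f_r(\vx))=I_r(2x)$ exactly. (This is the one computation I would actually carry out in detail, since it is the crux of the translation.)

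Finally I would apply $\ex$ to both sides of the determinantal identities \eqref{eq:BK_odd1} and \eqref{eq:BK_even1}. Since $\ex$ is a ring homomorphism and a finite determinant is a polynomial in its entries, $\ex$ commutes with the $w\times w$ determinant; substituting $\ex(f_{i\pm j})=I_{i\pm j}(2x)$ and $\ex(f_{i+j-1})=I_{i+j-1}(2x)$ into the entries, and $\ex(\sum_k e_k)=\exp(x)$ into the odd-case prefactor, yields precisely \eqref{eq:Gessel1} and \eqref{eq:Gessel2}. The main obstacle is not any hard estimate but rather bookkeeping: one must be careful that the infinite sums of symmetric functions converge in the appropriate completed ring so that $\ex$ may be applied term-by-term (each graded piece being a finite sum), and one must get the index conventions straight — in particular checking $I_{-\alpha}=I_\alpha$ so that $I_{i-j}$ is unambiguous, and confirming the width/row conventions as flagged above. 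Everything else is a routine specialization.
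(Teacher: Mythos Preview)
Your approach is correct and is essentially identical to the paper's: the paper defines the homomorphism $\theta:\Lambda\to\QQ[[x]]$ by $\theta(p_1)=x$, $\theta(p_n)=0$ for $n>1$ (your exponential specialization~$\ex$), records that $\theta(e_n)=x^n/n!$, $\theta(f_r)=I_r(2x)$, and $\theta(s_\lambda)=f^\lambda x^n/n!$, and then applies~$\theta$ to the bounded Littlewood identities~\eqref{eq:BK_odd1} and~\eqref{eq:BK_even1}. Your hedging about width versus length is unnecessary here since the paper defines the width of a tableau of shape~$\lambda$ to be~$\lambda_1$, so no transposition is needed; but this does not affect the validity of your argument.
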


In \cite[Theorem~10.7]{KLO}, the second author, Lee, and Oh found an integral expression for \( |\SYT_{n,w}| \), 
\[
 \int_{O(w+1)}(1-\det(X))(1+{\rm Tr}(X))^n d\mu(X),
\]
and gave an explicit formula by evaluating the integral. 
In the first formula below, $\Cat(n)$ stands for the {\it$n$-th
  Catalan number} if $n$ is a nonnegative integer, that is,
$\Cat(n)=\frac {1} {n+1}\binom {2n}n$, and $\Cat(n)=0$ if $n$ is not a
nonnegative integer.

\begin{thm}[\sc {\cite[Theorem~10.9]{KLO}}] \label{thm:KLO} 
For integers \( w\geq 1 \) and \( n\geq 0 \), we have
\begin{equation}\label{eqn:KLOodd}
  |\SYT_{n,2w+1}|
  =\underset {t_0+t_1+\cdots+t_w=n}
  {\sum_{t_0,t_1,\dots,t_{w}\in \Zp}}
    \binom{n}{t_0,t_1,\dots,t_w}
     \det_{1 \le i,j \le w} \left( \Cat\left(\frac{t_i+2w-i-j}{2}\right) \right)
 \end{equation}
 and
 \begin{equation}\label{eqn:KLOeven}
  |\SYT_{n,2w}|
  =\underset {t_1+\cdots+t_w=n}
  {\sum_{t_1,\dots,t_{w}\in \Zp}}
   \binom{n}{t_1,\dots,t_w}
   \det_{1 \le i,j \le w}
   \left( \ \binom{t_i+2w-i-j}{\lfloor (t_i+2w-i-j)/2\rfloor} \ \right).
  \end{equation}
\end{thm}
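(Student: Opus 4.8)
We plan to deduce Theorem~\ref{thm:KLO} from the skewing-operator formulas of Theorem~\ref{thm:BK2} by applying the exponential specialization and then extracting coefficients. Let $\varphi$ be the ring homomorphism from the degree-completion $\widehat{\Lambda}$ of the ring of symmetric functions to $\QQ[[x]]$ determined by $\varphi(p_1)=x$ and $\varphi(p_r)=0$ for $r\ge 2$; since it maps the homogeneous component of degree $n$ into $\QQ\,x^{n}$, it is well defined on $\widehat{\Lambda}$, and it is classical that $\varphi(s_\lambda)=f^{\lambda}\,x^{|\lambda|}/|\lambda|!$, where $f^{\lambda}$ is the number of standard Young tableaux of shape $\lambda$. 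I would first record three facts about $\varphi$: (i)~$\varphi(e(\vx))=\sum_{n\ge0}x^{n}/n!=\exp(x)$, since $\varphi(e_n)=x^{n}/n!$; (ii)~using \eqref{eq:f_k} and multiplicativity, $\varphi(f_r)=\sum_{n}x^{2n+r}/\bigl(n!\,(n+r)!\bigr)=I_r(2x)$ for $r\ge0$, while $f_{-r}=f_r$ and $I_{-r}=I_r$; and (iii)~$\varphi\circ\pop=\tfrac{d}{dx}\circ\varphi$ on $\widehat{\Lambda}$, which follows by applying $\varphi$ to the Pieri-type identity $\pop s_\lambda=\sum_{\mu\lessdot\lambda}s_\mu$ together with the branching rule $f^{\lambda}=\sum_{\mu\lessdot\lambda}f^{\mu}$. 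Since $\sum_{\lambda\vdash n,\ \lambda_1\le m}f^{\lambda}=|\SYT_{n,m}|$, applying $\varphi$ to the left-hand sides of \eqref{eq:oddpop} and \eqref{eq:evenpop} gives $\sum_{n\ge0}|\SYT_{n,2w+1}|\,x^{n}/n!$ and $\sum_{n\ge0}|\SYT_{n,2w}|\,x^{n}/n!$.

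Next I would identify the ``seed'' functions appearing on the right-hand sides. By (ii) and (iii), the $(i,j)$-entry of the image of the determinant in \eqref{eq:oddpop} is $\dfrac{d^{\,i+j-2}}{dx^{\,i+j-2}}\bigl(I_0(2x)-I_2(2x)\bigr)$, and a short computation with $I_\alpha(2x)=\sum_{m\ge0}x^{2m+\alpha}/\bigl(m!\,(m+\alpha)!\bigr)$ and $\binom{2m}{m}-\binom{2m}{m-1}=\Cat(m)$ gives
\[
I_0(2x)-I_2(2x)=\sum_{m\ge0}\Cat(m)\,\frac{x^{2m}}{(2m)!}=\sum_{\ell\ge0}\Cat(\ell/2)\,\frac{x^{\ell}}{\ell!},
\]
with the convention that $\Cat$ vanishes off the nonnegative integers; hence this entry equals $\sum_{t\ge0}\Cat\!\bigl(\tfrac{t+i+j-2}{2}\bigr)x^{t}/t!$. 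Likewise $\varphi(f_0+f_1)=I_0(2x)+I_1(2x)=\sum_{\ell\ge0}\binom{\ell}{\lfloor\ell/2\rfloor}x^{\ell}/\ell!$, so the corresponding entry from \eqref{eq:evenpop} equals $\sum_{t\ge0}\binom{t+i+j-2}{\lfloor(t+i+j-2)/2\rfloor}x^{t}/t!$. (Incidentally, applying $\varphi$ directly to Theorem~\ref{thm:BK-intro} reproduces Gessel's Theorem~\ref{thm:Gessel}; the point of the $\pop$-formulation is that now every matrix entry is a single function together with its successive derivatives, so its exponential generating coefficients form one sequence with a shifted index, which is exactly what is needed next.)

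Finally I would expand the determinant over permutations, $\det_{1\le i,j\le w}(\cdot)=\sum_{\sigma\in S_w}\sgn(\sigma)\prod_i(\cdot)$, and apply the exponential-formula rule $[x^{n}/n!]\prod_a g_a(x)=\sum_{\sum_a t_a=n}\binom{n}{\dots,t_a,\dots}\prod_a[x^{t_a}/t_a!]g_a(x)$, absorbing the extra factor $\exp(x)=\sum_{t_0\ge0}x^{t_0}/t_0!$ in the odd case. This yields
\[
|\SYT_{n,2w+1}|=\underset{t_0+\dots+t_w=n}{\sum_{t_0,\dots,t_w\in\Zp}}\binom{n}{t_0,\dots,t_w}\,\det_{1\le i,j\le w}\Bigl(\Cat\!\bigl(\tfrac{t_i+i+j-2}{2}\bigr)\Bigr),
\]
and the analogue for $|\SYT_{n,2w}|$, with the factor $\exp(x)$ (hence the variable $t_0$) omitted and the determinant entries replaced by $\binom{t_i+i+j-2}{\lfloor(t_i+i+j-2)/2\rfloor}$. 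Simultaneously reversing the order of the rows and of the columns of the $w\times w$ determinant (which does not change its value) and relabelling the summation variables accordingly replaces $i+j-2$ by $2w-i-j$, turning these formulas into \eqref{eqn:KLOodd} and \eqref{eqn:KLOeven}.

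There is no deep obstacle once Theorem~\ref{thm:BK2} is in hand: the step needing the most care is the bookkeeping around the specialization $\varphi$ on the completed ring $\widehat{\Lambda}$ --- in particular the identity $\varphi\circ\pop=\tfrac{d}{dx}\circ\varphi$, which rests on the branching rule --- and keeping the index conventions consistent with \cite{KLO}, which is what forces the final simultaneous row/column reversal. The remaining ingredients (the two Bessel-function identities and the exponential-formula coefficient extraction) are routine.
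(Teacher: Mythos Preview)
Your proposal is correct and follows essentially the same approach as the paper: apply the exponential specialization $\theta$ (your $\varphi$) to Theorem~\ref{thm:BK2}, use the intertwining $\theta\circ\pop=\tfrac{d}{dx}\circ\theta$ and the seed identities $I_0(2x)-I_2(2x)=\sum_{\ell\ge0}\Cat(\ell/2)\,x^\ell/\ell!$ and $I_0(2x)+I_1(2x)=\sum_{\ell\ge0}\binom{\ell}{\lfloor\ell/2\rfloor}x^\ell/\ell!$, and extract the coefficient of $x^n/n!$. You are in fact more explicit than the paper about the final step (simultaneous row/column reversal and relabelling of the $t_i$) needed to pass from entries indexed by $i+j-2$ to entries indexed by $2w-i-j$; the paper leaves this implicit.
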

We show in Section~\ref{sec:SYT-bounded} that these formulas can
conveniently be derived from the identities in Theorem~\ref{thm:BK2}.
Similarly, from the identities in Theorem~\ref{thm:Goulden2} we may
derive analogous formulas for the number of standard Young tableaux
with bounded width and prescribed number of odd rows.
Let \( f^\lambda \) denote the number of standard Young tableaux of shape \( \lambda \).
In order to state the second of the formulas below,
for nonnegative integers \( r \) and \( s \), define \( F(r,s)\coloneqq \binom{r}{s} -\binom{r}{s-1} \). 
We also define \( F(r,s/2)\coloneqq 0 \) if \( s \) is an odd integer.

\begin{thm}\label{cor:ref_KLO} 
  For nonnegative integers  \( k,w \), and \( n \), we have
  \begin{align}
  \label{eqn:ref_KLOoddSYT}
  \underset {\lambda_1\le 2w+1,\ r(\lambda)=k}
  {\sum_{\lambda\vdash n}} f^{\lambda}
  &=\underset {t_1+\cdots+t_w=n-k}
  {\sum_{t_1,\dots,t_w \in\Zp}}
   \binom{n}{k,t_1,\dots,t_w}
  \det_{1\le i,j\le w} \left(\Cat\left(\frac{t_i+i+j-2}{2}\right)\right),\\
  \label{eqn:ref_KLOevenSYT}
  \underset {\lambda_1\le 2w,\ r(\lambda)=k}
  {\sum_{\lambda\vdash n}} f^{\lambda}
  &=\underset {t_1+\cdots+t_w=n}
  {\sum_{t_1,\dots,t_w \in\Zp}}
   \binom{n}{t_1,\dots,t_w}
  \det_{1\le i,j\le w} 
  \left( F \left( t_i+j-1,\frac{t_i-i-k\delta_{i,w}+j}{2} \right)
   \right).
 \end{align} 
\end{thm}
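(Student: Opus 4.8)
The plan is to derive Theorem~\ref{cor:ref_KLO} from the skewing-operator identities in Theorem~\ref{thm:Goulden2} by extracting coefficients, exactly in parallel with how Theorem~\ref{thm:KLO} is (to be) obtained from Theorem~\ref{thm:BK2}. The bridge between the two worlds is the standard principal specialization: applying the algebra homomorphism $\mathrm{ex}\colon s_\lambda(\vx)\mapsto f^\lambda \frac{x^{|\lambda|}}{|\lambda|!}$ (equivalently, $p_1(\vx)\mapsto x$, $p_r(\vx)\mapsto 0$ for $r\ge 2$), the left-hand sides of \eqref{eq:oddref_k} and \eqref{eq:evenref_k} become the exponential generating functions $\sum_n \big(\sum_{\lambda\vdash n,\ \lambda_1\le 2w+1,\ r(\lambda)=k} f^\lambda\big)\frac{x^n}{n!}$ and its even analogue. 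So the first step is to record how $\mathrm{ex}$ acts on the building blocks appearing on the right-hand sides: one has $\mathrm{ex}(e_k(\vx)) = \frac{x^k}{k!}$, $\mathrm{ex}(f_r(\vx)) = I_r(2x)$ (the modified Bessel function, since $f_r$ is the coefficient extraction $\sum_n e_n e_{n+r}$ and $\mathrm{ex}(e_n)=x^n/n!$), and crucially $\mathrm{ex}$ intertwines $\pop$ with $\frac{d}{dx}$, because $\pop$ is the adjoint of multiplication by $p_1$ and $\mathrm{ex}$ sends $p_1\mapsto x$; thus $\mathrm{ex}\big((\pop)^m g(\vx)\big) = \frac{d^m}{dx^m}\mathrm{ex}(g(\vx))$.

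Next I would apply $\mathrm{ex}$ to both sides of \eqref{eq:oddref_k} and \eqref{eq:evenref_k}. The right-hand side of \eqref{eq:oddref_k} turns into $\frac{x^k}{k!}\det_{1\le i,j\le w}\big(\frac{d^{i+j-2}}{dx^{i+j-2}}(I_0(2x)-I_2(2x))\big)$, and similarly for the even case with the entries $\frac{d^{j-1}}{dx^{j-1}}(I_{i+k\delta_{i,w}-1}(2x)-I_{i+k\delta_{i,w}+1}(2x))$. The second step is then to expand these Bessel functions and their derivatives as power series and extract the coefficient of $\frac{x^n}{n!}$ on both sides. Using $I_\alpha(2x)=\sum_{r\ge0}\binom{2r+|\alpha|}{r}\frac{x^{2r+|\alpha|}}{(2r+|\alpha|)!}$ together with the identity $\binom{2r+a}{r}-\binom{2r+a+2}{r-1} = \Cat\!\big(\frac{(2r+a)\,}{\,}\big)$-type Catalan simplification (more precisely $I_{a}(2x)-I_{a+2}(2x)$ has $x^{m}/m!$-coefficient equal to $\Cat((m-a)/2)$ when $m\equiv a\pmod 2$ and $0$ otherwise, which is exactly the combinatorial identity $\binom{m}{(m-a)/2}-\binom{m}{(m-a)/2-1}=\Cat((m-a)/2)$ after reindexing $m=2r+a$), the determinant entries become, after differentiating $i+j-2$ times, precisely $\Cat\!\big(\frac{t_i+i+j-2}{2}\big)$ once one introduces the summation variable $t_i$ recording how the $n$ boxes (minus the $k$ from $e_k$) are distributed across rows of the determinant via the Leibniz/Cauchy product. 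For \eqref{eq:evenref_k} the same manipulation produces the entry $F\big(t_i+j-1,\frac{t_i-i-k\delta_{i,w}+j}{2}\big)$ because there $I_{a}(2x)$ itself (not a difference) appears and $\binom{m}{\lfloor m/2\rfloor}$ gets shifted to $\binom{t_i+j-1}{\cdot}$ after the $(j-1)$-fold differentiation; the definition $F(r,s)=\binom rs-\binom r{s-1}$ and the convention $F(r,s/2)=0$ for odd $s$ are tailored exactly to absorb the parity constraint coming from the Bessel expansion.

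The one genuinely delicate point — and the step I expect to be the main obstacle — is the bookkeeping of the multinomial coefficient and the summation variables $t_1,\dots,t_w$ when passing from a product/determinant of EGFs to a coefficient extraction. Concretely, $\det$ of a matrix whose $(i,j)$ entry is an EGF $\sum_m a^{(i)}_{m}x^m/m!$ has $x^n/n!$-coefficient equal to $\sum \binom{n}{t_1,\dots,t_w}\det(a^{(i)}_{t_i + (\text{shift in }j)})$ where the sum is over $t_1+\cdots+t_w=n$; getting the shifts in the second index right (the $i+j-2$ versus $j-1$ in the two cases), and correctly handling the extra factor $e_k(\vx)\mapsto x^k/k!$ which forces $t_1+\cdots+t_w=n-k$ and turns $\binom nk \binom{n-k}{t_1,\dots,t_w}$ into $\binom{n}{k,t_1,\dots,t_w}$ in \eqref{eqn:ref_KLOoddSYT}, is where sign errors and off-by-one errors lurk. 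Once this is set up cleanly, I would verify the special case $k=0$ reduces to \eqref{eqn:KLOodd} and \eqref{eqn:KLOeven} of Theorem~\ref{thm:KLO} (via \eqref{eq:Goulden2m,0}) as a consistency check, and also check small cases $w=1$ by hand, where \eqref{eqn:ref_KLOoddSYT} should just count SYT that are single rows of odd length $\le 3$ with a prescribed odd-row count, against a one-term sum. With those checks in place the argument is complete, since every step after the application of $\mathrm{ex}$ is a routine manipulation of binomial coefficients and determinants.
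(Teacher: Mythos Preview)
Your approach is exactly the one the paper uses: apply the exponential specialization $\theta$ (your $\mathrm{ex}$) to both sides of Theorem~\ref{thm:Goulden2}, use that $\theta$ intertwines $\pop$ with $d/dx$, and extract the coefficient of $x^n/n!$ via the multinomial expansion of a product of exponential generating functions. One small slip: in the even case \eqref{eq:evenref_k} the entry \emph{is} a difference $f_{a-1}-f_{a+1}$ (with $a=i+k\delta_{i,w}$), so after applying $\theta$ you get $I_{a-1}(2x)-I_{a+1}(2x)$, whose $x^m/m!$-coefficient is $F(m,(m-a+1)/2)=\binom{m}{(m-a+1)/2}-\binom{m}{(m-a-1)/2}$ rather than a central binomial; with that correction your bookkeeping goes through verbatim.
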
 

The proofs of these identities are also found in
Section~\ref{sec:SYT-bounded}. 

\medskip
{\sc Overview --- Section~\ref{sec:comb-interpr}: up-down tableaux.}
The subject of Section~\ref{sec:comb-interpr} is combinatorial
interpretations of the right-hand sides of the identities in
Theorems~\ref{thm:Goulden} and \ref{thm:row_Goulden}.
Combinatorial interpretations of these identities had been given
before in~\cite[proof of Theorem~3, Eq.~(2.2)]{KratAQ} in terms of
{\it non-crossing two-rowed arrays}. As opposed to that,
our combinatorial interpretations here are in terms of
({\it marked}) {\it up-down tableaux}, see their definition in
Definition~\ref{def:UD} (with refinements in
Definitions~\ref{def:UD2},
\ref{def:UD3}, \ref{def:UD4}, \ref{def:UD5}, and~\ref{def:mud_e}).
Theorems~\ref{thm:Goulden_MUDeven}
and~\ref{thm:Goulden_MUDodd} present our combinatorial interpretations
of the right-hand sides of the identities in
Theorem~\ref{thm:Goulden}.
Theorems~\ref{thm:1} and~\ref{thm:UD-tab} then present our
combinatorial interpretations 
of the right-hand sides of the identities in
Theorem~\ref{thm:row_Goulden}, rewritten in an equivalent form
in Theorem~\ref{thm:row_Goulden2-2}. Our main tool to derive these
results is the main theorem
on {\it nonintersecting lattice paths}, due to
Lindstr\"om~\cite[Lemma~1]{LindAA} (and later rediscovered, among
others, by Gessel and Viennot~\cite{GeViAA,GeViAB}), which allows one
to interpret determinants as the ones on the right-hand sides of the
identities in Theorems~\ref{thm:Goulden} and~\ref{thm:row_Goulden} as
certain generating functions for nonintersecting lattice
paths. We reformulate these families of nonintersecting lattice paths
then in terms of (marked) up-down tableaux.

\medskip
{\sc Overview --- Section~\ref{sec:SYT_walks}: standard Young tableaux and
  lattice walks.}
In the literature, there appear several results on equality of the
number of standard Young tableaux satisfying certain conditions and
the number of certain lattice walks.
We restate here two, due to Zeilberger and to Eu, Fu, Hou, and Hsu,
respectively.

\begin{thm} [\sc Zeilberger \cite{Zeilberger_lazy}]
\label{thm:zeilberger_lazy}
The number  \( |\SYT_{n,2w+1}| \)  is equal to the number of 
  walks of length $n$ in the region 
  $\{(x_1,\dots,x_w):x_1\ge \dots\ge x_w\ge0\}$
  using steps in \( \{ \vec0, \pm \epsilon_1,\dots,\pm \epsilon_w\}\),
  where \( \vec0 = (0,\dots,0) \) and 
  $\epsilon_i=(0,\dots,0,1,0,\dots,0)$, with the $1$ in position~$i$.
\end{thm}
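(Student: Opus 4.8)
The plan is to set up a bijection — or, more efficiently, a weight-preserving correspondence between generating functions — linking the lattice walks in the Weyl chamber $\{x_1 \ge \dots \ge x_w \ge 0\}$ with steps from $\{\vec0, \pm\epsilon_1, \dots, \pm\epsilon_w\}$ to standard Young tableaux with at most $2w+1$ columns. First I would reindex the walk coordinates: given a walk position $(x_1, \dots, x_w)$ in the chamber, pass to the strictly decreasing sequence $(x_1 + w, x_2 + w - 1, \dots, x_w + 1)$, which records the positions of the $w$ ``up-steps of a fermionic particle system'' on $\ZZ_{\ge 0}$. Under this shift, a $\pm\epsilon_i$ step moves the $i$-th particle by $\pm 1$, the $\vec0$ step leaves all particles fixed, and the chamber condition together with $x_w \ge 0$ becomes the condition that the particle positions stay distinct and nonnegative — i.e.\ that at each time step the configuration is a strict partition with at most $w$ parts, equivalently (after subtracting the staircase back off) a partition $\lambda$ with $\lambda_1 \le$ (something); one checks the column bound works out to $2w+1$ because of the ``lazy'' $\vec0$ step, which is exactly the extra slack that turns $2w$ into $2w+1$.

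The cleanest route is to recognize the walk generating function as a matrix coefficient and match it against the right-hand side of Gessel's formula~\eqref{eq:Gessel1}. Indeed, the number of length-$n$ walks from the origin back to the origin-type reference point, or rather the total number of length-$n$ walks staying in the chamber, is $\sum_{n\ge 0} (\text{that number}) \frac{x^n}{n!}$-encoded by $\exp(x)$ for the $\vec0$ step times a determinant built from one-dimensional walk generating functions, via the reflection principle (Lindström–Gessel–Viennot / Zeilberger's own reflection argument). A single coordinate performing $\{0, +1, -1\}$ steps and staying $\ge 0$, when made to avoid a reflecting partner, contributes precisely $I_{i-j}(2x) - I_{i+j}(2x)$ after including the $\exp(x)$ laziness factor is factored out — this is the standard Bessel-function evaluation of a ballot-type walk. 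Assembling the $w$ particles with the nonintersecting condition gives exactly $\exp(x)\det_{1\le i,j\le w}(I_{i-j}(2x) - I_{i+j}(2x))$, which by Theorem~\ref{thm:Gessel} equals $\sum_{n\ge0} |\SYT_{n,2w+1}| \frac{x^n}{n!}$. Comparing coefficients of $\frac{x^n}{n!}$ yields the claim.

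Alternatively, and perhaps more in the spirit of the combinatorial sections of this paper, one can argue bijectively: a walk of length $n$ in the chamber is the same data as an up-down-type tableau sequence $\emptyset = \mu^{(0)}, \mu^{(1)}, \dots, \mu^{(n)}$ where consecutive shapes differ by addition of a box, removal of a box, or nothing (the $\vec0$ step), with all $\mu^{(i)}$ having at most $w$ rows; standard RSK-type insertion/promotion machinery (Sundaram-style, or the oscillating-tableau correspondence) then converts such a sequence into a standard Young tableau of size $n$, and the row-bound $w$ on the intermediate shapes translates — because of the extra ``stay'' moves — into the column bound $2w+1$ on the final tableau. The main obstacle in either approach is bookkeeping the exact bound: one must verify carefully that it is $2w+1$ and not $2w$ or $2w+2$, i.e.\ that the $\vec0$ step contributes exactly one unit of extra column-width slack. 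This is where I would be most careful, checking it against small cases ($w=0$ gives walks that only stay put, matching $|\SYT_{n,1}| = 1$; $w=1$ gives Motzkin-path-like walks on $\ZZ_{\ge0}$, matching $|\SYT_{n,3}|$) before trusting the general reflection computation.
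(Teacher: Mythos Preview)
The paper does not give its own proof of Theorem~\ref{thm:zeilberger_lazy}; it is quoted as Zeilberger's result, with the remark that ``Zeilberger combines a few known results to obtain his theorem,'' and that bijective proofs appear in Eu et al.~\cite{Eu2013} and in~\cite[Appendix~B]{HKKO}. Your second paragraph --- compute the walk exponential generating function via the reflection principle for the type~$B_w$ chamber as $\exp(x)\det_{1\le i,j\le w}(I_{i-j}(2x)-I_{i+j}(2x))$ and match it against Gessel's formula~\eqref{eq:Gessel1} --- is exactly Zeilberger's argument, and it is correct.

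Two points to tighten. First, the walks must start and end at the origin (this is in Zeilberger's title, though not explicit in the statement as reproduced); your hedge ``from the origin back to the origin-type reference point, or rather the total number of length-$n$ walks staying in the chamber'' should be resolved firmly in favor of the former --- only then does the reflection determinant give the right count. Second, your first paragraph's attempt to read the bound $2w+1$ directly off the particle picture is muddled and should be dropped: the bound does not come from the walk geometry alone but from the identification with the right-hand side of~\eqref{eq:Gessel1}.

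Your alternative bijective route via oscillating/vacillating tableaux is also sound and is the Eu et al.\ approach. In the paper's own framework, the same thing falls out by summing Theorem~\ref{thm:Goulden_MUDodd} over~$k$ and extracting the coefficient of $x_1\cdots x_n$: a marked $w$-up-down tableau $(T,S)$ with $\omega(T)\omega(S)=x_1\cdots x_n$ is exactly the data of a walk $(T_0,T_2,\dots,T_{2n})\in\VT_n(w;\vec0\to\vec0)$, the marking $S$ recording the zero steps. Either argument is shorter than what the paper needs for its refined versions (Theorems~\ref{thm:SYTodd} and~\ref{thm:SYTeven}), which is why the paper does not spell it out.
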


\begin{thm} [\sc Eu et al.~\cite{Eu2013}]
  \label{thm:eu} 
The number  \( |\SYT_{n,2w}| \) is equal to the number of walks of
length $n$ in the region 
  $\{(x_1,\dots,x_w):x_1\ge \dots\ge x_w\ge0\}$ using steps in \( \{ \vec0, \pm
  \epsilon_1,\dots,\pm \epsilon_w\}\) such that zero steps \( \vec0 \) can only occur
  when \( x_w=0 \).

Moreover, this is also true with the refined condition that the
number of odd rows is \( k \) and the number of zero steps is \( k \). 
\end{thm}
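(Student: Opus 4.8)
The plan is to deduce Theorem~\ref{thm:eu} from the enumerative formulas already available above---namely~\eqref{eqn:KLOeven} for the unrefined statement and~\eqref{eqn:ref_KLOevenSYT} for the refined one---by reinterpreting their right-hand sides as counting the lattice walks in question. Observe first that the left-hand side of~\eqref{eqn:ref_KLOevenSYT} is, by definition, the number of tableaux in~$\SYT_{n,2w}$ whose shape has exactly $k$ odd rows (and summing over~$k$ recovers $|\SYT_{n,2w}|$ and the left-hand side of~\eqref{eqn:KLOeven}). Hence it is enough to show that the right-hand side of~\eqref{eqn:ref_KLOevenSYT} equals the number of walks of length~$n$ in the region $\{x_1\ge\cdots\ge x_w\ge0\}$, starting and ending at the origin, with steps in $\{\vec0,\pm\epsilon_1,\dots,\pm\epsilon_w\}$, in which the zero steps~$\vec0$ occur only when $x_w=0$ and there are exactly~$k$ of them.

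The mechanism I would use is the Lindstr\"om--Gessel--Viennot lemma, as is done elsewhere in the article for the determinants in Theorems~\ref{thm:Goulden} and~\ref{thm:row_Goulden}. Since $F(r,s)=\binom{r}{s}-\binom{r}{s-1}$ is a ballot number, the determinant $\det_{1\le i,j\le w}\bigl(F(t_i+j-1,(t_i-i-k\delta_{i,w}+j)/2)\bigr)$ occurring in~\eqref{eqn:ref_KLOevenSYT} enumerates families of $w$ nonintersecting lattice paths with unit diagonal steps, the $i$-th of which is allotted a ``time budget'' of $t_i$ steps and whose start- and end-points are governed by the shifts $-i$, $j-1$ and $-k\delta_{i,w}$; after the customary staircase shift, the nonintersection condition becomes the chamber condition $x_1\ge\cdots\ge x_w\ge0$. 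The multinomial coefficient $\binom{n}{t_1,\dots,t_w}$, summed against $t_1+\cdots+t_w=n$, then interleaves the time coordinates of these $w$ paths into a single walk of length~$n$ whose $i$-th coordinate follows the $i$-th path; relabelling an up-step, respectively a down-step, of the $i$-th path as $+\epsilon_i$, respectively $-\epsilon_i$, turns this into a walk confined to the chamber with steps among $\pm\epsilon_1,\dots,\pm\epsilon_w$. The purpose of the last row and column, through the shift $-k\delta_{i,w}$, is to make the $w$-th coordinate path a Motzkin-type path which, besides its $\pm1$ steps, performs $k$ level steps, these being forced to occur only at height~$0$; under the relabelling they become precisely $k$ global zero steps~$\vec0$ occurring only when $x_w=0$. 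This matches the two sides of the asserted identity; tracking the equality ``number of odd rows $=$ number of zero steps'' throughout then yields the refined assertion, while summing over~$k$---equivalently, rerunning the argument from~\eqref{eqn:KLOeven}, with central binomial coefficients in place of~$F$---yields the unrefined one.

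I expect this last step to be the main obstacle: verifying that, after the staircase shift and the interleaving encoded by the multinomial coefficient, the Lindstr\"om--Gessel--Viennot path families are in bijection---statistic-preservingly---with the walks of the theorem, with the non-standard boundary behaviour at $x_w=0$ reproduced faithfully. In particular, one must check that ``zero steps only when $x_w=0$, exactly $k$ of them'' corresponds to the shift $-k\delta_{i,w}$ together with the constraint $t_1+\cdots+t_w=n$ (and not $t_1+\cdots+t_w=n-k$), the $k$ level steps being absorbed into the budget~$t_w$ of the last coordinate. This is precisely the point at which the ``$+$'' sign and the index $i+j-1$ in Gessel's determinant~\eqref{eq:Gessel2}, as opposed to the ``$-$'' sign and the index $i+j$ in~\eqref{eq:Gessel1}---that is, the fact that the rule ``$\vec0$ only when $x_w=0$'' renders the wall $x_w=0$ of ``type $B$'' rather than ``type $C$''---enter the picture. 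As a more self-contained alternative one may try to bypass the determinantal identities and prove the required equinumerosity directly, by a reflection-principle argument of Gessel--Zeilberger type for the hyperoctahedral group acting on $\ZZ^w$, treating the wall $x_w=0$ as a ``type $B$'' wall (which is what the $\vec0$-step rule amounts to); this should produce a determinant whose entries are the one-dimensional exponential generating functions $I_{i-j}(2x)+I_{i+j-1}(2x)$, matching Gessel's formula~\eqref{eq:Gessel2} and, in the refined setting, its $k$-graded analogue. One could also forgo the determinants altogether and construct the bijection directly on tableaux, in the spirit of Eu, Fu, Hou and Hsu. In every case the crux is the correct treatment of that single wall $x_w=0$.
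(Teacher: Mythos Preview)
The paper does not itself prove Theorem~\ref{thm:eu}; it is quoted as a known result of Eu, Fu, Hou and Hsu (with an alternative growth-diagram proof pointed to in~\cite{HKKO}). So there is no ``paper's own proof'' to compare with, and your proposal is an attempt to manufacture a proof out of the paper's determinantal identities. That is a reasonable idea, but as written it has genuine gaps.

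The main gap is in the claimed Lindstr\"om--Gessel--Viennot interpretation of the determinant on the right of~\eqref{eqn:ref_KLOevenSYT}. The $(i,j)$-entry is $F(t_i+j-1,(t_i-i-k\delta_{i,w}+j)/2)$, which counts ballot paths of \emph{length} $t_i+j-1$ ending at height $i-1+k\delta_{i,w}$. The length depends on the column index~$j$, so this is not of the form ``number of paths from $A_i$ to $B_j$ in a fixed step model''; you cannot feed it directly into the LGV lemma without first rewriting the entry as a path count between two lattice points that separate cleanly into a row-datum and a column-datum. You gesture at a ``staircase shift'' but do not carry it out, and it is exactly here that the argument has to be made precise.

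A second gap is your explanation of where the zero steps come from. You assert that the shift $-k\delta_{i,w}$ turns the $w$-th coordinate into a Motzkin-type path with $k$ level steps at height~$0$. But every entry of the last row is still an $F$-number, hence counts ballot paths with \emph{no} level steps; the shift moves the endpoint of the $w$-th path, it does not change its step set. What the shift actually buys (after a correct LGV set-up) is that the $w$-th path starts $k$ units higher than it would otherwise, i.e.\ the walk begins at $(k,0,\dots,0)$ rather than at the origin, and still uses only $\pm\epsilon_i$ steps. (This is exactly the specialisation one reads off from Theorem~\ref{thm:Goulden_MUDeven} by extracting the coefficient of $x_1\cdots x_n$.) Converting such walks into walks from the origin to the origin with $k$ zero steps allowed only on the wall $x_w=0$ is a separate, non-trivial bijective step that your proposal does not supply.

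Of the three routes you list at the end, the Gessel--Zeilberger reflection argument for the hyperoctahedral group, or the direct bijection in the style of Eu et al., are the ones that actually address the ``type $B$'' wall behaviour you correctly identify as the crux; your primary route via~\eqref{eqn:ref_KLOevenSYT} would need substantially more work before it reaches that point.
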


While Zeilberger combines a few known results to obtain his theorem,
Eu et al.\ provide bijective proofs of Theorem~\ref{thm:eu} {\it
  and\/} of Theorem~\ref{thm:zeilberger_lazy}.
See however \cite[Appendix~B, proof of Corollary~B.5]{HKKO} for more
conceptual (bijective) proofs using growth diagrams.

Clearly, standard Young tableaux are the special case of semistandard Young
tableaux where the entries are restricted to $1,2,\dots,n$, each
appearing exactly once. Consequently, our results in
Section~\ref{sec:comb-interpr} should imply identities between
standard Young tableaux and the corresponding specializations of the
(marked) up-down tableaux that were the combinatorial objects in that
section. 
Section~\ref{sec:SYT_walks} is devoted to making the corresponding
results explicit, which sometimes requires additional (combinatorial)
arguments. Theorems~\ref{thm:SYTodd} and~\ref{thm:SYTeven} present
our results, the former concerning standard Young tableaux with an
odd bound on the number of columns, the latter with an even bound.

\medskip
{\sc Overview --- Section~\ref{sec:questions}: final questions.}
We close our article by posing a few questions that are suggested
by some of our results.

\section{Definitions}
\label{sec:pre}

In this section we collect definitions concerning
partitions, tableaux and symmetric functions.

For a nonnegative integer \( n \), we write \( [n] = \{ 1,\dots,n\} \).

\medskip
A \emph{partition} of a nonnegative integer \( n \) is a weakly decreasing 
sequence \( \lambda=(\lambda_1, \lambda_2, \dots, \lambda_k) \) of positive 
integers, called \emph{parts}, such that \( \sum_{i\geq 1} \la_i = n \). 
This also includes the empty partition \( () \), denoted by \(
\emptyset \), which, by definition, is the only partition of~0. 
We write \( \lambda\vdash n \) to mean that \( \lambda \) is a partition of \( n \).
If \( \lambda \) is a partition of \( n \) into \( k \) parts, we write 
\( |\lambda|=n \) and \( \ell(\lambda)=k \), and say that \( \lambda \) has 
\emph{size}~\( n \) and \emph{length}~\( k \). 
We denote by \(\Par \) the set of all partitions. 
It is often convenient to identify a partition \( (\lambda_1,\lambda_2,\dots,\lambda_k) \) 
with a sequence \( (\lambda_1,\lambda_2,\dots,\lambda_k, 0, 0, \dots) \)
or \( (\lambda_1,\lambda_2,\dots,\lambda_k, 0, \dots,0) \),
where infinitely or finitely many zeros are appended at the end.
Using this convention, we define \( \lambda_i=0 \) for \( i >\ell(\lambda) \).

The \emph{Young diagram} of a partition \( \lambda=(\la_1,\la_2,\dots, \la_k) \)
is the set \( \{ (i,j) \in \Z^2: 1\le i\le k, 1\le j\le \lambda_i\} \). Each element \(
(i,j) \) in a Young diagram is called a \emph{cell}. The Young diagram of \(
\lambda \) is visualized as a left-justified array of unit square cells with \(
\lambda_i \) cells in the \( i \)-th row, \( i=1,2,\dots, k \), from top to
bottom. We identify \( \lambda \) with its Young diagram. The \emph{conjugate}
(or \emph{transpose}) \( \lambda' \) of a partition \( \lambda \) is the
partition whose Young diagram is given by \( \{(j,i): (i,j)\in \lambda\} \). For
two partitions \( \lambda \) and \( \mu \) we write \( \mu\subseteq\lambda \) to
mean that the Young diagram of \( \mu \) is contained in that of \( \lambda \).
If \( \mu\subseteq\lambda \), the \emph{skew shape} \( \lambda/\mu \) is the
set-theoretic difference \( \lambda-\mu \) of the Young diagrams of \( \lambda
\) and \( \mu \). Each partition \( \lambda \) is also considered as the skew
shape \( \lambda/\emptyset \).

Given a partition $\lambda$, we denote by $r(\lambda)$ (respectively $c(\lambda)$) 
the number of rows (respectively columns) of odd length:
$$
r(\lambda) = | \{ i : \text{$\lambda_i$ is odd} \} |,
\quad
c(\lambda) = | \{ j : \text{$\lambda'_j$ is odd} \} |.
$$

\medskip
A \emph{tableau} of shape \( \lambda/\mu \) is a filling of the cells in \(
\lambda/\mu \) with positive integers. For a tableau \( T \) of shape \(
\lambda=(\lambda_1, \lambda_2, \dots) \), the \emph{size} and
\emph{width} of \( T \) are defined to be \( |\lambda| \) and \(
\lambda_1 \), respectively. A \emph{semistandard Young tableau} (SSYT)
is a tableau in which the entries along rows are weakly increasing and 
the entries along columns are strictly increasing. 
A \emph{standard Young tableau} (SYT) is a semistandard
Young tableau whose entries are the
integers \( 1,2,\dots,n \), where \( n \) is the size of the tableau.

\medskip
A {\it symmetric function} in the variables \( \vx= \{ x_1,x_2,\dots \} \)
is a formal power series in these variables (in the sense that
each monomial appearing in the expansion may only contain a finite number
of variables) that is invariant under
all permutations of the variables that leave almost all variables
invariant. We denote the algebra of symmetric functions by~$\Lambda$.
(Strictly speaking, this is the {\it completion}~$\hat\Lambda$ in the
sense of~\cite[p.~19, Remarks~1]{Macdonald}. However, we do not want
to make this distinction here.)

In this paper we shall be concerned with the following symmetric functions.

For $n\ge1$, the \emph{$n$-th power-sum symmetric function} $p_n(\vx)$ is defined by
$$
p_n(\vx)=\sum_{i\ge1}x_i^n.
$$
The \emph{\( n \)-th complete homogeneous symmetric function} \( h_n(\vx) \) and the \emph{\( n \)-th elementary symmetric function} \( e_n(\vx) \) are defined by
\[
h_n(\vx) =\sum_{i_1\leq i_2 \leq \cdots \leq i_n} x_{i_1}x_{i_2}\cdots x_{i_n}
\quad\text{and}\quad 
e_n(\vx) =\sum_{i_1<i_2<\cdots<i_n} x_{i_1}x_{i_2}\cdots x_{i_n},
\]
respectively. By convention, we set \( h_0(\vx)=e_0(\vx)=1 \) and define \(
h_n(\vx)\) and \(e_n(\vx) \) to be zero for \( n <0 \).

For any tableau \( T\), let \( \vx^T=x_1^{\alpha_1}x_2^{\alpha_2}\cdots
\), where \( \alpha_i \) is the number of \( i \)'s in \( T \). For a partition
\( \lambda \), the \emph{Schur function} \( s_\lambda(\vx) \) is defined by
\begin{equation} \label{eq:SF}
s_{\lambda}(\vx)=\sum_{T} \vx^T,
\end{equation}
where the sum is over all semistandard Young tableaux \( T \) of shape \(
\lambda \).
The number of standard Young tableaux of shape $\lambda$ is equal to 
the coefficient of $x_1 x_2 \cdots x_n$ in the Schur function
$s_\lambda(\vx)$, where $n = |\lambda|$.

The Schur function satisfies determinantal formulas in terms of
complete homogeneous symmetric functions and elementary symmetric functions.
These formulas are known as the {\it Jacobi--Trudi identity}
(cf.\ \cite[p.~41, Eq.~(3.4)]{Macdonald} or \cite[Theorem~7.16.1]{StanBI})
and the {\it N\"agelsbach--Kostka identity}
(cf.\ \cite[p.~41, Eq.~(3.5)]{Macdonald} or
\cite[Corollary~7.16.2]{StanBI}).
Explicitly, they are
\begin{equation}
\label{eq:JT}
s_\lambda(\vx)
 = 
\det_{1 \le i, j \le p} \left( h_{\lambda_i - i + j}(\vx) \right)
 =
\det_{1 \le i, j \le q} \left( e_{\lambda'_i - i + j}(\vx) \right),
\end{equation}
where $p \ge \ell(\lambda)$ and $q \ge \lambda_1$.

The \emph{Hall
  inner product} \( \langle \cdot,\cdot\rangle \) on symmetric
functions is defined by \( \langle
s_\lambda(\vx), s_\mu(\vx) \rangle = \delta_{\lambda,\mu}\) for all partitions \( \lambda
\) and \( \mu \). For a given symmetric function \( f(\vx)\in\Lambda \), the
\emph{skewing operator} \( f^\perp:\Lambda\to \Lambda \) is defined
as the adjoint operator of multiplication by~$f(\vx)$ with respect to the
Hall inner product, that is,
\[
  \langle f^\perp r_1(\vx),r_2(\vx) \rangle =\langle r_1(\vx),f(\vx)r_2(\vx) \rangle,
\]  
for all \( r_1(\vx),r_2(\vx) \in \Lambda \).

\section{Bounded Littlewood identities with a fixed number of odd
  columns:
  proof of Theorem~\ref{thm:row_Goulden}}
\label{sec:rowGoulden}

The purpose of this section is to prove Theorem~\ref{thm:row_Goulden},
which contains refinements of the two bounded Littlewood identities 
in Theorem~\ref{thm:BK-intro} according to the number of odd-length
columns.  

Our proof is based on the minor summation formula, which we recall
in Subsection~\ref{sec:IsWa}.
We need however several further preparations before we are able to
actually prove Theorem~\ref{thm:row_Goulden}.
These are formulas which reduce (certain) Pfaffians
to determinants of matrices of half the size, and which are similar
to formulas that Gordon had given in~\cite{Gordon5} --- see
Lemma~\ref{lem:Gordon} in Subsection~\ref{sec:Gordon} ---, a
particular determinant
simplification --- see Lemma~\ref{lem:sum_det} in the same subsection
---, and formulas for minors of certain special Pfaffians that serve
as the starting point of the use of the minor summation formula ---
see Proposition~\ref{prop:subPf} in Subsection~\ref{sec:subPf}.
With these auxiliary results, Theorem~\ref{thm:row_Goulden} is
finally established in Subsection~\ref{sec:PF}.

\subsection{The minor summation formula}
\label{sec:IsWa}

We use the following notation for submatrices.
Let \( A=(a_{i,j})_{i\in I, j\in J} \) be a matrix 
and let \( R=(r_1,\dots,r_p) \) and \( S=(s_1,\dots,s_q) \) be sequences 
of row and column indices, respectively.
We define
\[
A^R_S = \left( a_{r_i,s_j} \right)_{1 \le i \le p,1 \le j \le q}.
\]
If $A$ is skew-symmetric, then we write $A^R$ instead of~$A^R_R$ for brevity.
We also define
\[ (r_1, \dots, r_p) \sqcup (r'_1, \dots, r'_{p'})
 = (r_1,\dots,r_p,r'_1,\dots,r'_{p'}). \]
By abusing notation, we write \( A^{[m]}_S \) to mean that
$[m]= (1,2,\dots,m)$ is a sequence rather than the set
\( \{ 1,\dots,m\} \).

The minor summation formula of Ishikawa and
Wakayama is the following.

\begin{thm}[{\sc \cite[\sc Theorem~1]{IsWaAA}}]
\label{thm:IsWa}%
Let $m$ be an even integer and $p$ a positive integer (or infinity).
For a $p \times p$ skew-symmetric matrix $A = (a_{r,s})_{1 \le r, s \le p}$ 
and an $m \times p$ matrix $M = ( M_{i,r} )_{1 \le i \le m, 1 \le r \le p}$, we have
\[
\sum_{K}
 \Pf \left( A^K \right)
 \det \left( M_K^{[m]} \right)
 =
\Pf \left( M \,A \, M^t \right)
 =
\Pf_{1 \le i < j \le m}
 \left(
  \sum_{r,s=1}^p a_{r,s} M_{i,r} M_{j,s}
 \right),
\]
where $K = (k_1,\dots,k_m)$ runs over all increasing sequences 
\( 1 \le k_1 < \dots < k_m \le p \) of integers.
\end{thm}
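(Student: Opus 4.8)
I would prove Theorem~\ref{thm:IsWa} as follows. The second equality is immediate from the definitions: the $(i,j)$ entry of $MAM^t$ equals $\sum_{r,s=1}^p M_{i,r}\,a_{r,s}\,M_{j,s}$, and $MAM^t$ is skew-symmetric, since $(MAM^t)^t = M\,A^t\,M^t = -\,MAM^t$, so that its Pfaffian is well defined. Everything therefore reduces to the first equality, which is the Pfaffian analogue of the Cauchy--Binet formula. The plan is to recognize $\Pf(MAM^t)$ as the value of an alternating multilinear form applied to the rows of $M$, and then to expand that form in the standard basis.

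Write $m=2n$, let $v_1,\dots,v_m$ be the rows of $M$, regarded as row vectors over whatever commutative ring $R$ contains the entries of $A$ and $M$, and let $\omega(u,w)=u\,A\,w^t$ be the skew-symmetric bilinear form attached to $A$, so that $(MAM^t)_{i,j}=\omega(v_i,v_j)$. Set
\[
\Phi(v_1,\dots,v_m)\;=\;\Pf_{1\le i<j\le m}\bigl(\omega(v_i,v_j)\bigr).
\]
I would first observe that $\Phi$ is multilinear in $v_1,\dots,v_m$: in the defining sum of the Pfaffian over perfect matchings of $[m]$, each argument $v_l$ occurs in exactly one entry $\omega(v_l,v_j)$ of each monomial, and this entry is linear in $v_l$, so every monomial --- hence $\Phi$ --- is linear in $v_l$. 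I would then use the classical fact that the Pfaffian of a skew-symmetric matrix with two equal rows vanishes (over a field this follows from $\Pf(B)^2=\det(B)=0$; over an arbitrary $R$ it follows by the universal-coefficient principle, the statement being a polynomial identity with integer coefficients valid over $\QQ$): if $v_k=v_l$ with $k\ne l$, then rows $k$ and $l$ of $\bigl(\omega(v_i,v_j)\bigr)$ coincide, whence $\Phi(v_1,\dots,v_m)=0$. Thus $\Phi$ is alternating as well.

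Granting these properties, the standard description of alternating multilinear forms applies. Expanding each $v_i=\sum_{r=1}^p M_{i,r}\,e_r$ in the standard basis $e_1,\dots,e_p$, multilinearity gives $\Phi(v_1,\dots,v_m)=\sum_{r_1,\dots,r_m}M_{1,r_1}\cdots M_{m,r_m}\,\Phi(e_{r_1},\dots,e_{r_m})$; the alternating property kills all terms with a repeated index, and, after collecting the $m!$ permutations of a fixed increasing sequence $K=(k_1<\dots<k_m)$, the coefficient of $\Phi(e_{k_1},\dots,e_{k_m})$ becomes the determinant $\det_{1\le i,j\le m}(M_{i,k_j})=\det(M_K^{[m]})$ by the Leibniz formula. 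Finally $\Phi(e_{k_1},\dots,e_{k_m})=\Pf\bigl(\omega(e_{k_i},e_{k_j})\bigr)_{1\le i,j\le m}=\Pf\bigl(a_{k_i,k_j}\bigr)_{1\le i,j\le m}=\Pf(A^K)$, so that
\[
\Pf(MAM^t)=\Phi(v_1,\dots,v_m)=\sum_{K}\det(M_K^{[m]})\,\Pf(A^K),
\]
which is the asserted identity; when $p=\infty$ the same computation applies once all sums are interpreted as convergent in the appropriate completed coefficient ring. The argument is elementary; the only points requiring care are the sign bookkeeping implicit in ``collecting the permutations of $K$'' (i.e.\ recognizing the Leibniz determinant) and the universal-coefficient reduction for the vanishing of a Pfaffian with two equal rows. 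A more hands-on alternative would expand $\Pf(MAM^t)$ directly over perfect matchings of $[m]$, distribute the inner sums over $r,s$ to obtain a signed sum indexed by maps $c\colon[m]\to[p]$, show that the non-injective $c$ cancel in pairs, and reassemble the injective $c$ with image $K$ into $\det(M_K^{[m]})\,\Pf(A^K)$, at the cost of a more delicate sign analysis.
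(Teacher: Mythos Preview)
Your argument is correct. The multilinearity of $\Phi(v_1,\dots,v_m)=\Pf\bigl(\omega(v_i,v_j)\bigr)$ is clear from the perfect-matching expansion, the vanishing when two arguments coincide (hence the alternating property) is justified exactly as you say, and the standard basis expansion then yields $\sum_K \det(M_K^{[m]})\,\Pf(A^K)$ with $\Phi(e_{k_1},\dots,e_{k_m})=\Pf(A^K)$.

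There is nothing to compare against, however: the paper does not prove Theorem~\ref{thm:IsWa}. It is quoted verbatim from Ishikawa and Wakayama~\cite{IsWaAA} and used as a black box in Section~\ref{sec:rowGoulden}. Your write-up thus supplies a self-contained proof where the paper offers none; the alternating-multilinear-form viewpoint you take is the cleanest route to this Pfaffian Cauchy--Binet identity and would serve well if the paper wished to include a proof.
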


\subsection{Gordon-type reductions of Pfaffians to determinants, and
  a determinant lemma}
\label{sec:Gordon}

The following Gordon-type formulas enable us to transform Pfaffians into determinants.

\begin{lem} \label{lem:Gordon}
If the quantities $z_i$, $i \in \Z$, satisfy $z_{-i} = - z_i$, then we have
\begin{align}
\Pf_{1 \le i, j \le 2w} \left( z_{j-i} \right)
 &=
\det_{1 \le i, j \le w} \left(
 z_{i-j+1} + z_{i-j+3} + \dots + z_{i+j-1}
\right)
\label{eq:G1}\\
 &=
\frac{1}{2}
\det_{1 \le i, j \le w} \left(
 \begin{cases}
  z_i - z_{i-2}, &\text{if $j = 1$} \\
  z_{i-j+1} - z_{i-j-1} + z_{i+j-1} - z_{i+j-3}, &\text{if $2 \le j \le w$}
 \end{cases}
\right)
\label{eq:G2}
\\
 &=
\det_{1 \le i, j \le w} \left(
 \sum_{k=1}^{2j-1} (z_{i-j+k} + z_{i-j+k-1})
\right)
\label{eq:G3}
\\
 &=
\det_{1 \le i, j \le w} \left(
 \sum_{k=1}^{2j-1} (-1)^{k-1} (z_{i-j+k} - z_{i-j+k-1})
\right).
\label{eq:G4}
\end{align}
\end{lem}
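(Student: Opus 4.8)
The plan is to prove \eqref{eq:G1} first --- it is the substantive identity --- and then to obtain \eqref{eq:G2}, \eqref{eq:G3} and \eqref{eq:G4} either from it or by parallel arguments. Observe at the outset that the hypothesis $z_{-i}=-z_i$ forces $z_0=0$, a fact that will be used whenever an index drops to~$0$.

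For \eqref{eq:G1} I would proceed in the spirit of Gordon~\cite{Gordon5}: regard $A=(z_{j-i})_{1\le i,j\le 2w}$ as a skew-symmetric Toeplitz matrix and reduce its Pfaffian to a $w\times w$ determinant by a congruence transformation. Since $\Pf(BAB^t)=\det(B)\,\Pf(A)$ for any matrix~$B$, it suffices to exhibit a unimodular~$B$ carrying $A$ into a skew-symmetric matrix which, after a suitable reordering of its $2w$ rows and columns into two groups of~$w$, has vanishing top-left $w\times w$ block; for then the Pfaffian equals, up to a fixed sign, the determinant of the off-diagonal block. Concretely, one performs simultaneous row and column operations that build the partial sums $z_{i-j+1}+z_{i-j+3}+\cdots+z_{i+j-1}$ appearing on the right-hand side of \eqref{eq:G1}, arranged so that this off-diagonal block is exactly that matrix. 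Two alternative routes are available: (a) induction on~$w$, expanding both sides along their last two rows and columns and matching the resulting recursions, using $z_0=0$ at the boundary; or (b) the Ishikawa--Wakayama minor summation formula (Theorem~\ref{thm:IsWa}), after writing the super-diagonal entries $z_{j-i}$ in an appropriate bilinear form so that the sum over index sets there degenerates to a single determinant.

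Granting \eqref{eq:G1}, the remaining three identities are equalities of $w\times w$ determinants and follow either by running the same Pfaffian reduction with a different choice of operations, or directly from the determinant in \eqref{eq:G1} by row and column operations. In \eqref{eq:G2} one additionally pulls out a common factor~$2$ --- whence the prefactor $\tfrac12$ --- and the first column is singled out because the corresponding telescoping has no predecessor. I expect the one genuine obstacle to be \eqref{eq:G1}: the bookkeeping in the congruence (equivalently, the degeneration of the minor-summation sum) must be carried out carefully enough to land on precisely the matrix $\bigl(z_{i-j+1}+z_{i-j+3}+\cdots+z_{i+j-1}\bigr)$, and not merely on some matrix sharing its determinant; everything downstream of \eqref{eq:G1} is then routine linear algebra.
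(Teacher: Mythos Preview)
Your proposal is sound in outline but works harder than the paper does. For \eqref{eq:G1} the paper does not reprove Gordon's reduction from scratch: it simply invokes \cite[Lemma~1]{Gordon5}, which already gives
\[
\Pf_{1\le i,j\le 2w}(z_{j-i})
=\det_{1\le i,j\le w}\bigl(z_{|j-i|+1}+z_{|j-i|+3}+\dots+z_{i+j-1}\bigr),
\]
and then observes that the antisymmetry $z_{-r}+z_r=0$ lets one replace $|j-i|$ by $i-j$ in each entry (the ``extra'' terms $z_{i-j+1}+\dots+z_{j-i-1}$ cancel in pairs when $i<j$). Your congruence-transformation route would reconstruct Gordon's lemma itself, which is fine but redundant; the induction and minor-summation alternatives you sketch are plausible but neither is carried far enough to see that they land on precisely the stated matrix rather than a row-equivalent one.

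For \eqref{eq:G2}--\eqref{eq:G4} your plan matches the paper exactly: all three follow from \eqref{eq:G1} by explicit row and column operations. Concretely, \eqref{eq:G2} subtracts row $i-2$ from row $i$ (for $i=w,\dots,3$), simplifies rows $1$ and $2$ using $z_{-r}=-z_r$, and then does the analogous column subtractions; the factor $\tfrac12$ appears because row~$1$ becomes $\tfrac12(z_j-z_{-j})$. Identities \eqref{eq:G3} and \eqref{eq:G4} add (respectively subtract) adjacent rows and then adjacent columns. So the only substantive difference is at \eqref{eq:G1}: cite Gordon and do the two-line antisymmetry check, rather than rebuilding the congruence.
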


\begin{proof}
By \cite[Lemma~1]{Gordon5}, we have
$$
\Pf_{1 \le i, j \le 2w} \Big( z_{j-i} \Big)
 =
\det_{1 \le i, j \le w} \Big( z_{|j-i|+1} + z_{|j-i|+3} + \dots + z_{i+j-1} \Big).
$$
If $i < j$, then we use the relations $z_{-r} + z_r = 0$ to see that
\begin{align*}
z_{i-j+1} + z_{i-j+3} &{}+ \dots + z_{i+j-1}
\\
 &=
z_{i-j+1} + z_{i-j+3} + \dots + z_{j-i-3} + z_{j-i-1}
 +
z_{j-i+1} + z_{j-i+3} + \dots + z_{i+j-3} + z_{i+j-1}
\\
 &=
z_{|j-i|+1} + z_{|j-i|+3} + \dots + z_{i+j-3} + z_{i+j-1}.
\end{align*}
On the other hand, if $i\ge j$ then there is nothing to do.
Hence we obtain~\eqref{eq:G1}.

The identities \eqref{eq:G2}, \eqref{eq:G3} and~\eqref{eq:G4} are deduced 
from~\eqref{eq:G1} by elementary row and column operations.

We first prove \eqref{eq:G2}. By subtracting the $(i-2)$-nd row from
the $i$-th row for $i=w, w-1, \dots, 3$, we obtain
\[
A:= 
\det_{1 \le i, j \le w} \left(
 z_{i-j+1} + z_{i-j+3} + \dots + z_{i+j-1}
\right)
 =
\det_{1 \le i, j \le w} \left(
 \begin{cases}
  z_{i-j+1} + z_{i-j+3} + \dots + z_{i+j-1}, & \text{if \( i\le 2 \)} \\
  z_{i+j-1} - z_{i-j-1}, & \text{if \( i>2 \)}
 \end{cases}
\right).
\]
Note that if \( i=1 \), then 
\[
  z_{i-j+1} + z_{i-j+3} + \dots + z_{i+j-1} =
  z_{-j+2}+z_{-j+4} + \cdots + z_{j-2} + z_{j} = z_j
  = \frac{1}{2} (z_j-z_{-j}),
\]
and if \( i=2 \), then 
\[
  z_{i-j+1} + z_{i-j+3} + \dots + z_{i+j-1} =
  z_{-j+3}+z_{-j+5} + \cdots + z_{j-3} + z_{j-1} +  z_{j+1}
  = z_{j-1} +  z_{j+1} = z_{j+1} - z_{1-j}.
\]
Thus, we have
\[
  A = \frac{1}{2} \det_{1 \le i, j \le w} \left(z_{i+j-1} - z_{i-j-1} \right).
\]
By subtracting the $(j-2)$-nd column from the $j$-th column for
$j=w, w-1, \dots, 3$, we obtain
\[
  A = \frac{1}{2}
\det_{1 \le i, j \le w} \left(
 \begin{cases}
  z_{i+j-1} - z_{i-j-1}, &\text{if \( j\le 2 \)} \\
  z_{i-j+1} - z_{i-j-1} + z_{i+j-1} - z_{i+j-3}, &\text{if \( j>2 \)}
 \end{cases}
\right),
\]
which implies \eqref{eq:G2}.

Equation~\eqref{eq:G3} is derived from~\eqref{eq:G1} 
by adding the $(i-1)$-st row to the $i$-th row for $i=w,w-1, \dots, 2$, 
and then adding the $(j-1)$-st column to the $j$-th column for
$j=w,w-1, \dots, 2$. 
Similarly, Equation~\eqref{eq:G4} is obtained by using subtraction
instead of addition. 
\end{proof}

In the last step of the proof of Theorem~\ref{thm:row_Goulden}, 
we use the following determinant lemma.

\begin{lem}
\label{lem:sum_det}
For $(n+1)$ row vectors $\vv_0, \vv_1, \dots, \vv_n$ of length $n$ 
and scalars $\beta_1, \dots, \beta_n, \gamma_1, \dots, \gamma_n$, we have
$$
\det \begin{pmatrix}
 \beta_1 \vv_0 + \gamma_1 \vv_1 \\
 \beta_2 \vv_1 + \gamma_2 \vv_2 \\
 \vdots \\
 \beta_n \vv_{n-1} + \gamma_n \vv_n
\end{pmatrix}
 =
\sum_{k=0}^n
 \prod_{i=1}^k \beta_i \prod_{j=k+1}^n \gamma_j \cdot
 \det \begin{pmatrix}
  \vv_0 \\ \vdots \\ \vv_{k-1} \\ \vv_{k+1} \\ \vdots \\ \vv_n
 \end{pmatrix}.
$$
\end{lem}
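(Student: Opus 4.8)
The plan is to expand the determinant on the left-hand side by multilinearity in its rows, and then to observe that all but $n+1$ of the resulting terms vanish because two rows coincide.

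First I would write $D$ for the left-hand determinant and note that its $i$-th row is the two-term sum $\beta_i\vv_{i-1}+\gamma_i\vv_i$. Expanding each of the $n$ rows into its two summands and using that $\det$ is an alternating $n$-linear function of its rows, I get
\[
D=\sum_{\ep\in\{0,1\}^n}\Bigl(\prod_{i:\,\ep_i=0}\beta_i\Bigr)\Bigl(\prod_{i:\,\ep_i=1}\gamma_i\Bigr)\det M(\ep),
\]
where $M(\ep)$ is the $n\times n$ matrix whose $i$-th row is $\vv_{i-1}$ when $\ep_i=0$ and $\vv_i$ when $\ep_i=1$.

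The key step is to determine which $\ep$ actually contribute. If $\ep_i=1$ and $\ep_{i+1}=0$ for some $i$, then the $i$-th row of $M(\ep)$ is $\vv_i$ and its $(i+1)$-st row is $\vv_{(i+1)-1}=\vv_i$ as well, so $\det M(\ep)=0$. Hence only the $\ep$ with no such ``descent'' survive, i.e.\ the ones of the form $(\underbrace{0,\dots,0}_{k},\underbrace{1,\dots,1}_{n-k})$ for some $k\in\{0,1,\dots,n\}$. For such an $\ep$, the rows of $M(\ep)$ are $\vv_0,\dots,\vv_{k-1}$ (coming from rows $1,\dots,k$) followed by $\vv_{k+1},\dots,\vv_n$ (coming from rows $k+1,\dots,n$); that is, $M(\ep)$ has rows $\vv_0,\dots,\vv_n$ with $\vv_k$ deleted, and its coefficient in the sum above is $\prod_{i=1}^k\beta_i\prod_{j=k+1}^n\gamma_j$. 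Summing over $k$ yields exactly the right-hand side.

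I do not expect a genuine obstacle here: the identity is just multilinearity of the determinant together with the elementary remark that picking the ``$\gamma$-summand'' in one row and the ``$\beta$-summand'' in the next row forces two equal rows. The only point requiring (minor) care is the bookkeeping that, for each surviving $\ep$, correctly identifies $\vv_k$ as the omitted vector and $\prod_{i=1}^k\beta_i\prod_{j=k+1}^n\gamma_j$ as the accompanying coefficient, including the boundary cases $k=0$ (all rows are $\gamma$-summands) and $k=n$ (all rows are $\beta$-summands).
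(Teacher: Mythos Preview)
Your proof is correct and is essentially identical to the paper's own argument: both expand the determinant by multilinearity (indexed by a subset $I\subseteq[n]$ in the paper, by a binary string $\ep$ in your write-up), and then kill all terms except those corresponding to $\{1,\dots,k\}$ (equivalently $\ep=(0^k,1^{n-k})$) by exhibiting two equal adjacent rows. The bookkeeping you flag for the boundary cases $k=0$ and $k=n$ is handled correctly.
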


\begin{proof}
For a subset $I$ of $[n]$, 
let $V(I)$ be the $n \times n$ matrix whose $i$-th row is $\vv_{i-1}$ if $i \in I$ 
and $\vv_i$ if $i \not\in I$.
Then we have
$$
\det \begin{pmatrix}
 \beta_1 \vv_0 + \gamma_1 \vv_1 \\
 \beta_2 \vv_1 + \gamma_2 \vv_2 \\
 \vdots \\
 \beta_n \vv_{n-1} + \gamma_n \vv_n
\end{pmatrix}
 =
\sum_{I \subseteq [n]} 
 \prod_{i \in I} \beta_i \prod_{j \not\in I} \gamma_j \cdot
 \det V(I).
$$
If there is an index $p$ such that $p \not\in I$ and $p+1 \in I$, 
then the $p$-th and $(p+1)$-st rows of~$V(I)$ are identical, hence $\det V(I) = 0$.
It follows that $\det V(I) = 0$ unless $I = \emptyset$ or $I = \{ 1,
2, \dots, k \}$ for some $k$  with $1 \le k \le n$, which implies the formula in the lemma.
\end{proof}

\subsection{Sub-Pfaffians and minors}
\label{sec:subPf}
Another key ingredient in the proof of Theorem~\ref{thm:row_Goulden} 
is the following sub-Pfaffian expressions of the weights 
$u^{c(\lambda)} \pm u^{m-c(\lambda)} = u^{r(\lambda')} \pm u^{m-r(\lambda')}$, 
where $\lambda'$ is the conjugate of~$\lambda$, $c(\lambda)$ is the
number of odd columns of~$\lambda$,
and $r(\lambda')$ is the number of odd rows of~$\lambda'$, as before.
For a partition $\mu$ of length $\le m$, we put
$$
I_m(\mu) = (\mu_m+1, \mu_{m-1}+2, \dots, \mu_1+m).
$$

\begin{prop}
\label{prop:subPf}
Let $A = (a_{i,j})$ be the skew-symmetric matrix with rows/columns indexed by 
the totally ordered set $\{ 0 < 0' < 1 < 2 < \cdots \}$, whose entries are given by
$$
a_{0,0'} = 0,
\quad
a_{0,j} = 1+u,
\quad
a_{0',j} = (-1)^{j-1} (1-u),
\quad
a_{i,j} =
\begin{cases}
 1 + u^2, &\text{if $j - i$ is odd,} \\
 2 u, &\text{if $j - i$ is even,}
\end{cases}
$$
where $i$ and $j$ are positive integers and $i<j$.
\begin{enumerate}
\item[(1)]
For a partition $\mu$ of length $\le 2h$, we have
\begin{align*}
\Pf A^{I_{2h}(\mu)}
 &=
 2^{h-1} \left( u^{r(\mu)} + u^{2h-r(\mu)} \right),
\\
\Pf A^{(0,0') \sqcup I_{2h}(\mu)}
 &=
 2^h \left( u^{r(\mu)} - u^{2h-r(\mu)} \right).
\end{align*}
\item[(2)]
For a partition $\mu$ of length $\le 2h+1$, we have
\begin{align*}
\Pf A^{(0) \sqcup I_{2h+1}(\mu)}
 &= 
2^h \left( u^{r(\mu)} + u^{2h+1 - r(\mu)} \right),
\\
\Pf A^{(0') \sqcup I_{2h+1}(\mu)}
 &= 
2^h \left( u^{r(\mu)} - u^{2h+1 - r(\mu)} \right).
\end{align*}
\end{enumerate}
\end{prop}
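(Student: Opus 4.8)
The plan is to evaluate each of the four Pfaffians directly by exploiting the near-two-periodic structure of the matrix $A$ together with the specific form of the index set $I_{2h}(\mu) = (\mu_{2h}+1,\mu_{2h-1}+2,\dots,\mu_1+2h)$. First I would record the crucial elementary observation: for positive integers $i<j$, the entry $a_{i,j}$ depends only on the parity of $j-i$, equalling $1+u^2$ when $j-i$ is odd and $2u$ when $j-i$ is even. Consequently, if we set $b_k = 1+u^2$ for $k$ odd and $b_k = 2u$ for $k$ even (with $b_{-k}=b_k$, $b_0 = 2u$ — though the diagonal is never used), the submatrix $A^{I_{2h}(\mu)}$ restricted to the positive indices is of the form $(b_{\mu_{2h-j+1}+j - \mu_{2h-i+1}-i})$, and the relevant parity is that of $(\mu_{2h-j+1}+j) - (\mu_{2h-i+1}+i)$. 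Writing $\epsilon_i = \mu_{2h-i+1}+i \bmod 2$, the $(i,j)$ entry is $1+u^2$ if $\epsilon_i\ne\epsilon_j$ and $2u$ if $\epsilon_i=\epsilon_j$. The number of odd parts among $\mu_1,\dots,\mu_{2h}$ equals $r(\mu)$, and one checks that the number of indices $i\in[2h]$ with $\epsilon_i=1$ is exactly $r(\mu)$ when... more precisely the multiset $\{\epsilon_i\}$ has $r(\mu)$ ones and $2h-r(\mu)$ zeros up to the fixed alternation coming from the $+i$ shift; I would make this bookkeeping precise at the outset, since everything reduces to it.

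Next, having reduced to a Pfaffian of a $2h\times 2h$ skew-symmetric matrix whose $(i,j)$ entry is one of two values according to whether $\epsilon_i=\epsilon_j$, I would compute this Pfaffian in closed form. By simultaneous row-and-column permutation (which leaves the Pfaffian invariant up to a sign that is a square, hence $+1$, or which I track explicitly), group the indices so that all $i$ with $\epsilon_i=0$ come first, then all $i$ with $\epsilon_i=1$. The resulting matrix has a $2\times 2$ block structure: the two diagonal blocks are skew-symmetric with all off-diagonal entries $2u$, and the off-diagonal block is the all-$(1+u^2)$ matrix. A skew-symmetric matrix of even size $2r$ with all super-diagonal entries equal to a constant $c$ has Pfaffian $c^{r}$ (it is $c$ times $J$, and $\Pf$ of the standard all-ones-above skew matrix is $1$); more useful here is the rank-type structure — the all-$c$ skew matrix has rank $2$, so for the block computation I would instead use the formula $\Pf\!\begin{pmatrix}P & Q\\ -Q^t & R\end{pmatrix}$ expanded via the minor-summation/Cauchy--Binet-type identity, or simply diagonalize. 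The cleanest route: use the identity $\Pf(X) = \sum (\text{products over perfect matchings})$ and observe that a matching contributes $u^{(\#\text{even-type pairs})}\cdot(1+u^2)^{?}$... — rather than belabor this, I would invoke the standard evaluation that a skew-symmetric matrix all of whose entries (above the diagonal) equal $2u$ except that entries between two distinguished classes equal $1+u^2$ has Pfaffian equal to $u^{h}\bigl((u+u^{-1})/\text{something}\bigr)$; after cleaning up the powers of $2$ and $u$ this yields $2^{h-1}(u^{r(\mu)}+u^{2h-r(\mu)})$. The appearance of $u^{r(\mu)}+u^{2h-r(\mu)}$ rather than a single power is the signature of the two choices ``pair the leftover odd-type index on the left versus on the right,'' which is exactly what a $2\times 2$-block Pfaffian with two rank-one-ish blocks produces.

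For the bordered cases, I would handle $\Pf A^{(0,0')\sqcup I_{2h}(\mu)}$, $\Pf A^{(0)\sqcup I_{2h+1}(\mu)}$, and $\Pf A^{(0')\sqcup I_{2h+1}(\mu)}$ by Pfaffian expansion along the new first row(s). Expanding $\Pf A^{(0,0')\sqcup I_{2h}(\mu)}$ along the rows indexed $0$ and $0'$ uses $a_{0,0'}=0$, $a_{0,j}=1+u$, $a_{0',j}=(-1)^{j-1}(1-u)$, and reduces the $(2h+2)$-size Pfaffian to a signed sum of $\Pf A^{I_{2h-2}(\cdot)}$'s over deletions of two indices — i.e.\ a sum over pairs $(p<q)$ from $I_{2h}(\mu)$ of $(1+u)(-1)^{q-1}(1-u)\,\Pf(\text{remaining})$ with alternating signs. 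Here the factor $(1+u)(1-u)=1-u^2$ combines with the $1+u^2$ and $2u$ entries of the core to telescope; tracking the signs $(-1)^{j-1}$ against the Pfaffian expansion signs is what collapses the double sum to the single expression $2^h(u^{r(\mu)}-u^{2h-r(\mu)})$. The odd cases $I_{2h+1}$ with a single border index $0$ (resp.\ $0'$) are similar but simpler, expanding along one row with entries $1+u$ (resp.\ $(-1)^{j-1}(1-u)$); the sign pattern in the $0'$ case is precisely what flips the $+$ to a $-$ and changes $2^{h-1}$ to $2^h$ relative to part~(1). I expect the main obstacle to be the sign bookkeeping in these bordered expansions — keeping the Pfaffian-expansion signs $(-1)^{\text{position}}$ aligned with the intrinsic signs $(-1)^{j-1}$ in the $0'$-row so that the two-term answer $u^{r(\mu)}\pm u^{2h-r(\mu)}$ (and not some messier combination) emerges. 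A clean way to organize this is to induct on $h$: delete the last index $\mu_1+2h$ (or the pair of largest indices) and show the recursion $\Pf_{h} = (1+u^2)\,\Pf_{h-1} + (\text{parity-dependent }2u\text{ term})$ has the claimed closed form as its unique solution, reducing all the sign-chasing to a single induction step.
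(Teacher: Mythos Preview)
Your initial observation is exactly right and matches the paper's: the entries of $A^{I_m(\mu)}$ depend only on the parities $\epsilon_i=(-1)^{\mu_{m+1-i}}$, and the count of odd $\epsilon_i$'s recovers $r(\mu)$. From there, however, your plan diverges from the paper's and contains some problems.

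First, two concrete errors. A simultaneous row/column permutation changes a Pfaffian by the sign of the permutation, not by its square, so your ``hence $+1$'' is false and the block-regrouping step would require genuine sign tracking. Second, the skew-symmetric matrix with all above-diagonal entries equal to $c$ has Pfaffian $c^{r}$ (as you say) and is therefore of \emph{full} rank $2r$, not rank~$2$; the ``rank-one-ish blocks'' heuristic you rely on for the block Pfaffian does not apply. More seriously, after grouping by parity the two diagonal blocks have sizes $r(\mu)$ and $2h-r(\mu)$, which need not be even, so block-Pfaffian formulas of the type you are reaching for are not directly available. The passage ``rather than belabor this, I would invoke the standard evaluation\dots'' is exactly where the content of the proof lies, and you have not supplied it.

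The paper avoids all of this by a uniform multilinearity trick. It introduces, for a sign vector $\epsilon\in\{\pm1\}^n$ and indeterminates $\mathbf{u}=(u_1,\dots,u_n)$, the skew-symmetric matrix $M^\epsilon(\mathbf{u})$ with $(i,j)$-entry $1+u_iu_j$ or $u_i+u_j$ according to whether $\epsilon_i\epsilon_j=(-1)^{i+j+1}$ or $(-1)^{i+j}$, and proves in one stroke that
\[
\Pf M^\epsilon(\mathbf{u})=2^{n/2-1}\Bigl(\prod_{i:\epsilon_i=+1}u_i+\prod_{i:\epsilon_i=-1}u_i\Bigr)
\]
by extracting the coefficient of $u^I=\prod_{i\in I}u_i$ for each $I\subseteq[n]$: this coefficient is the Pfaffian of a $\{0,1\}$-matrix, which is shown to vanish (two equal rows) unless $I=\{i:\epsilon_i=+1\}$ or its complement, in which case it equals the Pfaffian of the fixed matrix $N_n$ with $N_{i,j}=\chi(j-i\text{ odd})$, easily seen to be $2^{n/2-1}$ by a row subtraction and induction. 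The four Pfaffians in the proposition are then \emph{all} specializations of this single lemma: $A^{I_{2h}(\mu)}=M^{(\epsilon_1,\dots,\epsilon_{2h})}(u,\dots,u)$, $A^{(0,0')\sqcup I_{2h}(\mu)}=M^{(1,1,\epsilon_1,\dots,\epsilon_{2h})}(1,-1,u,\dots,u)$, $A^{(0)\sqcup I_{2h+1}(\mu)}=M^{(1,\epsilon_1,\dots,\epsilon_{2h+1})}(1,u,\dots,u)$, $A^{(0')\sqcup I_{2h+1}(\mu)}=M^{(1,\epsilon_1,\dots,\epsilon_{2h+1})}(-1,u,\dots,u)$. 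No bordered expansions, no permutation signs, no odd-size blocks: the extra rows $0,0'$ are absorbed by setting some $u_i$'s to $\pm1$, and the $\pm$ in the answers comes for free from the two-term product formula. Compared with your plan, this buys a single clean computation in place of four separate (and, as sketched, incomplete) ones.
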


For the proof of this proposition, we need the following
auxiliary result.

\begin{lem}
\label{lem:PfN}
Let $n$ be an even integer,
and let $N_n = (N_{i,j})_{1 \le i, j \le n}$ be the $n \times n$ skew-symmetric matrix with 
$(i,j)$-entry given by
$$
N_{i,j}
 =
\begin{cases}
 1, &\text{if $j-i$ is odd,} \\
 0, &\text{if $j-i$ is even},
\end{cases}
$$
for $i < j$.
Then we have
$$
\Pf N_n = 2^{n/2-1}.
$$
\end{lem}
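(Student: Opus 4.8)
The plan is to compute the Pfaffian of $N_n$ directly by exploiting its very rigid combinatorial structure. Recall that $\Pf N_n = \sum_{\pi} \sgn(\pi) \prod_{\{i,j\}\in\pi} N_{i,j}$, where $\pi$ ranges over perfect matchings of $[n]$ written with $i<j$ in each block. Since $N_{i,j}=1$ precisely when $j-i$ is odd, i.e.\ when $i$ and $j$ have opposite parities, only those matchings in which every block pairs an odd-indexed element with an even-indexed element contribute, and each such matching contributes $\sgn(\pi)\cdot 1$. Thus I would first reduce the statement to the assertion $\sum_{\pi}\sgn(\pi)=2^{n/2-1}$, where the sum is over perfect matchings of $[n]$ matching each odd number to an even number.

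The cleanest route for the reduced claim, I expect, is a short induction on $n$ via Laplace (row) expansion of the Pfaffian along the first row. The Pfaffian expansion gives
\[
\Pf N_n = \sum_{j=2}^{n} (-1)^{j} N_{1,j}\, \Pf\bigl(N_n^{\{2,\dots,n\}\setminus\{j\}}\bigr),
\]
and $N_{1,j}=1$ exactly for $j$ even. For $j$ even, deleting rows/columns $1$ and $j$ from $N_n$ leaves a skew-symmetric matrix on the index set $\{2,\dots,n\}\setminus\{j\}$ whose $(a,b)$-entry is still $1$ iff $a-b$ is odd; relabeling this $(n-2)$-element set in increasing order, the parity pattern is preserved up to a global shift, so the resulting matrix is exactly $N_{n-2}$. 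Hence $\Pf N_n = \bigl(\sum_{j \text{ even},\,2\le j\le n} (-1)^j\bigr)\Pf N_{n-2} = \tfrac{n}{2}\,\Pf N_{n-2}$ would be the naive guess --- but that is wrong, so care is needed: the sign $(-1)^j$ is $+1$ for all even $j$, giving a coefficient $n/2$, not $2$. The resolution is that one must track the relabeling sign more carefully, or better, avoid this pitfall by instead pairing up matchings.

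Therefore the approach I actually favor is the sign-reversing/pairing argument. Fix the element $1$; in any contributing matching it is paired with some even $j$. Define an involution on contributing matchings that, I claim, pairs matchings with opposite sign except for a set of $2^{n/2-1}$ fixed points each of sign $+1$ --- concretely, one natural choice is: find the smallest even number $2k$ not matched to an odd number smaller than itself in the ``canonical'' way and swap a suitable pair of blocks; the fixed points are the ``planar/nested'' matchings pairing $\{2a-1,2a\}$-type blocks after a parity-respecting rearrangement. Making this precise is the main obstacle. An alternative, perhaps safer, is to use the determinantal identity $\Pf(N_n)^2=\det(N_n)$ and compute $\det N_n$ directly: $N_n$ has a clean block structure after separating odd and even indices (it becomes, up to row/column permutation with an explicit sign, a block matrix $\begin{pmatrix}0 & B\\ -B^t & 0\end{pmatrix}$ with $B$ a triangular $0/1$ matrix of all $1$'s on and above a diagonal), so $\det N_n=(\det B)^2=1$, forcing $\Pf N_n=\pm 1$ --- which does not give $2^{n/2-1}$, so $B$ is actually the all-ones upper-triangular-including-diagonal matrix only in a different arrangement; the honest computation shows $B$ is the matrix with $B_{a,b}=1$ for $b\ge a$ reading off the odd-even incidence, whose determinant is $1$, again giving $\pm1$.

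Since the target value is $2^{n/2-1}$, the block computation shows my parity reduction was too lossy, so I will commit to the direct expansion done correctly: expanding $\Pf N_n$ along the first row as above and recognizing each minor as (a signed copy of) $N_{n-2}$, the subtlety is that $N_{i,j}=1$ for $j-i$ odd means entries between an odd index and an even index are $1$, so in the submatrix on $\{2,\dots,n\}\setminus\{j\}$ (with $j$ even) the pair $\{i,k\}$ with both $i,k$ odd-after-relabeling need not match the original parity; a direct check on small cases ($n=2$: $\Pf N_2 = N_{1,2}=1=2^0$; $n=4$: $\Pf N_4 = N_{1,2}N_{3,4}-N_{1,3}N_{2,4}+N_{1,4}N_{2,3}=1-0+1=2=2^1$) confirms the formula and pins down the correct recursion $\Pf N_n = 2\,\Pf N_{n-2}$, which I will then justify in general by showing that exactly two of the minors arising in the first-row expansion are nonzero and equal to $\Pf N_{n-2}$ with the same sign (these correspond to pairing $1$ with $2$, and pairing $1$ with $n$), all other even choices of $j$ yielding a submatrix with a zero-row after relabeling because the parity alternation is broken. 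The main obstacle is precisely this last structural claim; I would prove it by explicitly describing which $j$ leave the alternating parity pattern intact after order-preserving relabeling of $\{2,\dots,n\}\setminus\{j\}$ --- namely $j\in\{2,n\}$ --- and for all intermediate even $j$ exhibiting an index whose row in the relabeled submatrix is identically zero.
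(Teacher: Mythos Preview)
Your final approach --- expand along the first row and argue that only $j=2$ and $j=n$ give nonzero minors --- is salvageable, but the specific claim you commit to is false: for an intermediate even $j$ with $4\le j\le n-2$, the submatrix on $\{2,\dots,n\}\setminus\{j\}$ has no zero row. What actually happens is that the rows corresponding to the original indices $j-1$ and $j+1$ are \emph{identical}: for every remaining index $k$ we have $k\neq j$, so $k-(j-1)$ and $k-(j+1)$ have the same parity and the same sign, whence $N_{j-1,k}=N_{j+1,k}$. Two equal rows/columns in a skew-symmetric matrix force the Pfaffian to vanish, so the conclusion $\Pf N_n=2\,\Pf N_{n-2}$ does follow once you replace ``zero row'' by ``two equal rows'' and check the signs at $j=2$ and $j=n$ (both $(-1)^j=+1$, and both submatrices are copies of $N_{n-2}$ since parity of a difference is shift-invariant).

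The paper obtains the same recursion in a single stroke: subtract the third row/column from the first. Since $N_{1,k}=N_{3,k}$ for all $k\ge 4$ (both depend only on the parity of $k$) and $N_{1,2}-N_{3,2}=1-(-1)=2$ while $N_{1,3}-N_{3,3}=0$, the new first row is $(0,2,0,\dots,0)$; expanding along it gives $\Pf N_n=2\,\Pf N_{n-2}$ immediately, with no minor-by-minor analysis.

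Incidentally, the determinantal route you abandoned also works if you compute the off-diagonal block correctly. With odd indices first and even second, the odd--even block $B=(N_{2a-1,2b})_{1\le a,b\le m}$ has $B_{a,b}=+1$ for $a\le b$ and $B_{a,b}=-1$ for $a>b$ (not an upper-triangular $0/1$ matrix). A row reduction gives $\det B=2^{m-1}$, hence $\det N_n=(\det B)^2=2^{n-2}$ and $\Pf N_n=\pm 2^{n/2-1}$, the sign being fixed by $\Pf N_2=1$.
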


\begin{proof}
If \( n=2 \), we have \( \Pf N_n=1 \).
Suppose $n \ge 4$ and proceed by induction on $n$.
By subtracting the third row/column from the first row/column, 
and then by expanding the resulting Pfaffian along the first row/column, 
we see that $\Pf N_n = 2 \Pf N_{n-2}$.
Hence the proof is completed by using the induction hypothesis.
\end{proof}

Proposition~\ref{prop:subPf} is obtained from special cases of the
following lemma. 

\begin{lem}
\label{lem:PfM}
Let $n$ be an even integer.
Let $\vu = (u_1, \dots, u_n)$ be a sequence of indeterminates and 
$\ep = (\ep_1, \dots, \ep_n) \in \{ 1, -1 \}^n$.
We define $M^\ep(\vu) = \big( m_{ij} \big)_{1 \le i, j \le n}$ to be the skew-symmetric matrix 
with $(i,j)$-entry, $i<j$, given by
$$
m_{ij} =
\begin{cases}
 1 + u_i u_j, &\text{if $\ep_i \ep_j = (-1)^{i+j+1}$,} \\
 u_i + u_j,   &\text{if $\ep_i \ep_j = (-1)^{i+j}$.}
\end{cases}
$$
Then we have
$$
\Pf M^\ep(\vu)
 =
2^{n/2-1} \left( 
 \prod_{i:\ep_i = +1} u_i + \prod_{i:\ep_i = -1} u_i
\right).
$$
\end{lem}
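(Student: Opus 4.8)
The plan is to express the strictly upper-triangular entries of $M^\ep(\vu)$ as a difference of two rank-one symmetric forms, which turns $M^\ep(\vu)$ into a difference of two diagonal conjugates of the skew-symmetric ``sign matrix'' $J = (J_{ij})_{1 \le i,j \le n}$ with $J_{ij} = 1$ for $i < j$, and then to apply the Pfaffian addition formula. Setting $\eta_i = (-1)^i \ep_i$, so that $\eta_i \eta_j = -1$ precisely when $\ep_i \ep_j = (-1)^{i+j+1}$ (and $\eta_i \eta_j = +1$ otherwise), a two-case check gives
\[
m_{ij} = \tfrac{1}{2}\bigl[(1+u_i)(1+u_j) - \eta_i \eta_j (1-u_i)(1-u_j)\bigr], \qquad 1 \le i < j \le n,
\]
and hence, with $D_a = \operatorname{diag}(1+u_1, \dots, 1+u_n)$ and $D_c = \operatorname{diag}(c_1, \dots, c_n)$ where $c_i = (-1)^i \ep_i (1-u_i)$,
\[
M^\ep(\vu) = \tfrac{1}{2}\bigl(D_a J D_a - D_c J D_c\bigr).
\]

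The next step is to invoke the Pfaffian addition formula: for skew-symmetric $n \times n$ matrices $A, B$ with $n$ even,
\[
\Pf(A + B) = \sum_{\substack{S \subseteq [n] \\ |S| \text{ even}}} \sgn(S, \bar S)\, \Pf(A^S)\, \Pf(B^{\bar S}),
\]
where $\bar S = [n] \setminus S$, $A^S$ denotes the principal submatrix on rows and columns indexed by $S$, and $\sgn(S, \bar S) = (-1)^{\sum_{s \in S} s - \binom{|S|+1}{2}}$ is the sign of the shuffle listing $S$ and then $\bar S$ in increasing order. Applying this with $A = \tfrac12 D_a J D_a$ and $B = -\tfrac12 D_c J D_c$, and noting that every principal submatrix $J^S$ is again a sign matrix of even size with $\Pf(J^S) = 1$ (a one-line induction on $|S|$, analogous to the proof of Lemma~\ref{lem:PfN}), gives $\Pf(A^S) = 2^{-|S|/2} \prod_{i \in S}(1+u_i)$ and $\Pf(B^{\bar S}) = (-1)^{|\bar S|/2} 2^{-|\bar S|/2} \prod_{i \in \bar S} c_i$. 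The factor $\prod_{i \in \bar S} c_i$ carries an extra sign $(-1)^{\sum_{i \in \bar S} i} \prod_{i \in \bar S}\ep_i$; since $\sum_{i \in \bar S} i = \binom{n+1}{2} - \sum_{s \in S} s$, the three parity-dependent signs $\sgn(S, \bar S)$, $(-1)^{|\bar S|/2}$, $(-1)^{\sum_{i \in \bar S} i}$ multiply to $1$ for every even-size $S$ (the terms $\sum_{s \in S} s$ cancel, and the surviving exponent is $\equiv n - |S| \equiv 0 \pmod 2$). Writing $\prod_{i \in \bar S}\ep_i = \bigl(\prod_{i=1}^n \ep_i\bigr) \prod_{i \in S}\ep_i$ and $2^{-|S|/2} 2^{-|\bar S|/2} = 2^{-n/2}$, one is left with
\[
\Pf M^\ep(\vu) = 2^{-n/2}\Bigl(\prod_{i=1}^n \ep_i\Bigr) \sum_{\substack{S \subseteq [n] \\ |S| \text{ even}}}\ \prod_{i \in S} \ep_i(1+u_i)\ \prod_{i \in \bar S}(1-u_i).
\]

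Finally, I would evaluate this subset sum by the elementary identity $\sum_{|S| \text{ even}} \prod_{i \in S} x_i \prod_{i \notin S} y_i = \tfrac12\bigl(\prod_i (x_i + y_i) + \prod_i (y_i - x_i)\bigr)$ with $x_i = \ep_i(1+u_i)$ and $y_i = 1-u_i$: one has $(x_i + y_i,\ y_i - x_i) = (2, -2u_i)$ when $\ep_i = +1$ and $=(-2u_i, 2)$ when $\ep_i = -1$, so $\prod_i(x_i+y_i) = 2^n (-1)^{N_-} \prod_{i: \ep_i = -1} u_i$ and $\prod_i(y_i - x_i) = 2^n (-1)^{N_+} \prod_{i: \ep_i = +1} u_i$, where $N_\pm = |\{i : \ep_i = \pm 1\}|$. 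Substituting and simplifying the prefactor via $\prod_{i=1}^n \ep_i = (-1)^{N_-}$ and $N_+ + N_- = n$ even, the signs $(-1)^{N_\pm}$ disappear and one obtains exactly $\Pf M^\ep(\vu) = 2^{n/2-1}\bigl(\prod_{i: \ep_i = +1} u_i + \prod_{i: \ep_i = -1} u_i\bigr)$. The only step needing genuine care is the sign cancellation in the middle paragraph; everything else is routine bookkeeping. An alternative route would be a direct induction on $n$, expanding the Pfaffian along the first row and observing that deleting two indices and relabelling produces a matrix again of the form $M^{\ep'}(\vu')$ (the parity shifts from the deletions being absorbable into $\ep'$); but the resulting alternating sum over the expansion index is more cumbersome than the closed-form route above.
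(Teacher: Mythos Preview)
Your proof is correct, and the key identity $m_{ij} = \tfrac{1}{2}\bigl[(1+u_i)(1+u_j) - \eta_i\eta_j(1-u_i)(1-u_j)\bigr]$ is a nice observation. The sign cancellation checks out: with $k=|S|$ even and $n$ even, the combined exponent $\binom{n+1}{2}-\binom{k+1}{2}+\tfrac{n-k}{2}$ reduces modulo~$2$ to $n-k\equiv 0$, exactly as you claim.

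Your route is genuinely different from the paper's. The paper proceeds by extracting the coefficient of each monomial $u^I=\prod_{i\in I}u_i$ in $\Pf M^\ep(\vu)$; this coefficient is itself the Pfaffian of a $0$--$1$ matrix $M[I]$ depending on~$I$ and~$\ep$. The paper then shows $\Pf M[I]=2^{n/2-1}$ when $I=E^+$ or $I=E^-$ (where $M[I]$ coincides with the matrix $N_n$ of Lemma~\ref{lem:PfN}), and $\Pf M[I]=0$ otherwise by exhibiting two identical adjacent rows. Your approach avoids this case analysis by writing $M^\ep(\vu)$ as a difference of two diagonal conjugates of the sign matrix~$J$ and invoking the Pfaffian addition formula, after which the subset sum collapses via a standard product identity. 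The trade-off: the paper's argument is more elementary in its tools (needing only Lemma~\ref{lem:PfN} and the repeated-row vanishing of Pfaffians), while yours is cleaner once the addition formula is granted and makes the polynomial structure of the answer transparent from the outset rather than assembling it monomial by monomial.
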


\begin{proof}
For a subset $I$ of $[n]$, we compute the coefficient of $u^I = \prod_{i \in I} u_i$ 
in $\Pf M^\ep(\vu)$.
This coefficient is equal to the Pfaffian of the matrix $M[I]$ whose entries are given as follows:

\begin{enumerate}
\item[(a)]
If $\ep_i \ep_j = (-1)^{i+j+1}$, then
$$
M[I]_{i,j} = 
\begin{cases}
 1, &\text{if $(i,j) \in (I \times I) \cup (I^c \times I^c)$,} \\
 0, &\text{if $(i,j) \in (I^c \times I) \cup (I \times I^c)$.}
\end{cases}
$$
\item[(b)]
If $\ep_i \ep_j = (-1)^{i+j}$, then
$$
M[I]_{i,j} = 
\begin{cases}
 0, &\text{if $(i,j) \in (I \times I) \cup (I^c \times I^c)$,} \\
 1, &\text{if $(i,j) \in (I^c \times I) \cup (I \times I^c)$.}
\end{cases}
$$
\end{enumerate}
Here $I^c$ denotes the complement of~$I$ in $[n]$.
Then it suffices to show that
$$
\Pf M[I] = 
\begin{cases}
 2^{n/2-1}, &\text{if $I = E^+$ or $E^-$,} \\
 0, &\text{otherwise,}
\end{cases}
$$
where we put
$$
E^+ = \{ i : \ep_i = 1 \},
\quad
E^- = \{ i : \ep_i = -1 \}.
$$

First we consider the case where $I = E^+$ or $I = E^-$.
In this case, we see that $M[I] = N_n$, thus
we have $\Pf M[I] = 2^{n/2-1}$ by Lemma~\ref{lem:PfN}.

Next we prove that $\Pf M[I] = 0$ unless $I = E^+$ or $E^-$.
Given a subset $K \subseteq [n]$, we put
$$
C(K)
 = 
\left\{ i \in [n-1] : 
 (i,i+1) \in (K \times K) \cup (K^c \times K^c)
\right\}.
$$
Then it is clear that $C(K) = C(L)$ if $K=L^c$.
Conversely, if $C(K) = C(L)$, 
then we can prove, by induction on $k$, that 
$[k] \cap K = [k] \cap L$, for $k=1, 2, \dots, n$, 
or $[k] \cap K = [k] \cap L^c$, for $k=1, 2, \dots, n$.
Hence we have $K = L$ or $K= L^c$.

Suppose $I \neq E^+$ or $E^-$, i.e., $C(I) \neq C(E^+) = C(E^-)$.
Then there is an index $i \in [n-1]$ satisfying one of the following conditions:
\begin{enumerate}
\item[(i)]
$(i,i+1) \in (I \times I) \cup (I^c \times I^c)$ and $\ep_i = - \ep_{i+1}$;
\item[(ii)]
$(i,i+1) \in (I \times I^c) \cup (I^c \times I)$ and $\ep_i = \ep_{i+1}$.
\end{enumerate}
In each of these cases, we can check that the $i$-th row/column of~$M[I]$ 
is identical with the $(i+1)$-st row/column.
Hence we obtain $\Pf M[I] = 0$.
\end{proof}

\begin{proof}[Proof of Proposition~\ref{prop:subPf}]
Given a partition $\mu$ of length $\le m$, we define $(\ep_1, \dots, \ep_m)$ by
$$
\ep_i
 =
(-1)^{\mu_{m+1-i}},
\qquad\text{for }1 \le i \le m.
$$
Then we have $r(\mu) = | \{ i : \ep_i = -1 \} |$ and $m-r(\mu) = | \{ i : \ep_i = 1 \} |$.
It is straightforward to check that
\begin{align*}
A^{I_{2h}(\mu)}
 &= 
M^{(\ep_1, \dots, \ep_{2h})}(u, \dots, u),
\\
A^{(0,0') \sqcup I_{2h}(\mu)}
 &=
M^{(1,1,\ep_1, \dots, \ep_{2h})}(1, -1, u, \dots, u),
\\
A^{(0) \sqcup I_{2h+1}(\mu)}
 &= 
M^{(1, \ep_1, \dots, \ep_{2h+1})}(1, u, \dots, u),
\\
A^{(0') \sqcup I_{2h+1}(\mu)}
 &= 
M^{(1, \ep_1, \dots, \ep_{2h+1})}(-1, u, \dots, u).
\end{align*}
Hence the proof is completed by applying Lemma~\ref{lem:PfM}.
\end{proof}

The next lemma collects several special instances of
the Jacobi--Trudi identity~\eqref{eq:JT}.

\begin{lem}
\label{lem:minor}
Let $T$ be the following matrix with rows indexed by $0, 0', 1, \dots, m$ and columns indexed by 
$0, 0', 1, 2, \dots$:
$$
T = \begin{pmatrix}
 1 & 0 & O \\
 0 & 1 & O \\
 O & O & \left( e_{r-i} \right)_{1 \le i \le m, r \ge 1}
\end{pmatrix},
$$
where each \( O \) is a block matrix consisting of zeros. 
Then the following properties hold.
\begin{enumerate}
\item[(1)]
For an increasing subsequence $K$ of $(1,2, \dots)$ of length $m$, we have
$$
\det T^{[m]}_K = 
\begin{cases}
 s_{\mu'}(\vx) & \text{if $K = I_m(\mu)$ for some partition \( \mu \) of length \( \le m \)} ,\\
 0, & \text{otherwise.}
\end{cases}
$$
\item[(2)]
For an increasing subsequence $K$ of $(0,0',1,2, \dots)$ of length $m+2$, we have
$$
\det T^{(0,0') \sqcup [m]}_K
 =
\begin{cases}
 s_{\mu'}(\vx), &\text{if $K = (0,0') \sqcup I_m(\mu)$ for some partition \( \mu \) of length \( \le m \),} \\
 0, &\text{otherwise.}
\end{cases}
$$
\item[(3)]
For an increasing subsequence $K$ of $(0,1,2, \dots)$ of length $m+1$, we have
$$
\det T^{(0) \sqcup [m]}_K
 =
\begin{cases}
 s_{\mu'}(\vx), &\text{if $K = (0) \sqcup I_m(\mu)$ for some partition \( \mu \) of length \( \le m \),} \\
 0, &\text{otherwise.}
\end{cases}
$$
\item[(4)]
For an increasing subsequence $K$ of $(0',1,2, \dots)$ of length $m+1$, we have
$$
\det T^{(0') \sqcup [m]}_K
 =
\begin{cases}
 s_{\mu'}(\vx), &\text{if $K = (0') \sqcup I_m(\mu)$ for some partition \( \mu \) of length \( \le m \),} \\
 0, &\text{otherwise.}
\end{cases}
$$
\end{enumerate}
\end{lem}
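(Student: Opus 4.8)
The plan is to deduce all four statements from part~(1), and to obtain part~(1) directly from the N\"agelsbach--Kostka (dual Jacobi--Trudi) identity recorded in~\eqref{eq:JT}.

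For part~(1) I would first record an elementary observation: \emph{every} increasing subsequence $K=(k_1<k_2<\dots<k_m)$ of $(1,2,\dots)$ is automatically of the form $I_m(\mu)$ for a unique partition $\mu$ of length $\le m$. Indeed, setting $\mu_i:=k_{m+1-i}-(m+1-i)$ one checks that $\mu_i-\mu_{i+1}=k_{m+1-i}-k_{m-i}-1\ge0$ (because the $k$'s strictly increase) and $\mu_m=k_1-1\ge0$, so $\mu$ is a partition, and by construction $K=I_m(\mu)$. Hence the ``otherwise'' branch of~(1) is vacuous. It then remains to evaluate, for $K=I_m(\mu)$ (so $k_j=\mu_{m+1-j}+j$),
\[
\det T^{[m]}_K=\det_{1\le i,j\le m}\left(e_{k_j-i}(\vx)\right)=\det_{1\le i,j\le m}\left(e_{\mu_{m+1-j}+j-i}(\vx)\right).
\]
Reversing the order of the $m$ rows and of the $m$ columns (each reversal contributes the sign $(-1)^{m(m-1)/2}$, and the two cancel) and then transposing turns this into $\det_{1\le i,j\le m}\left(e_{\mu_i-i+j}(\vx)\right)$, which by the second determinant in~\eqref{eq:JT}, taken with $\lambda=\mu'$ and $q=m\ge(\mu')_1=\ell(\mu)$, equals $s_{\mu'}(\vx)$.

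For parts~(2)--(4) I would run a short case analysis on whether the special indices $0$ and $0'$ belong to $K$, using only the block structure of $T$: the row of $T$ indexed $0$ has a single nonzero entry, a $1$ in column $0$, and the row indexed $0'$ has a single nonzero entry, a $1$ in column $0'$. Take part~(2), where $K$ has length $m+2$. If $\{0,0'\}\subseteq K$, write $K=(0,0')\sqcup K'$ with $K'$ an increasing subsequence of $(1,2,\dots)$ of length $m$; the rows $0$ and $0'$ of the submatrix form a $2\times2$ identity block in columns $0,0'$ and are zero elsewhere, so expanding along them gives $\det T^{(0,0')\sqcup[m]}_K=\det T^{[m]}_{K'}$, and part~(1) applies. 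If exactly one of $0,0'$ lies in $K$, then the row of the submatrix indexed by the \emph{absent} one of $\{0,0'\}$ is identically zero (its only potentially nonzero entry sits in a column that $K$ does not select), so the determinant vanishes; the same happens, even more obviously, if neither $0$ nor $0'$ lies in $K$. Parts~(3) and~(4) are treated identically, using only the row $0$ (respectively only the row $0'$) of $T$.

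I do not expect a genuine obstacle here; the only place that needs care is the index bookkeeping in part~(1) --- lining up the sequence $I_m(\mu)$ with the pattern $e_{\lambda'_i-i+j}$ of the N\"agelsbach--Kostka determinant, and verifying that the two order-reversals leave the overall sign unchanged. Everything else is a routine expansion along rows that have a single nonzero entry.
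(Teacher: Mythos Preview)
Your argument is correct and matches the paper's approach: the paper introduces the lemma as ``several special instances of the Jacobi--Trudi identity''~\eqref{eq:JT} without further proof, which is exactly what you spell out via the N\"agelsbach--Kostka determinant for part~(1) and cofactor expansion along the rows $0$ and/or $0'$ for parts~(2)--(4). Your observation that the ``otherwise'' branch in~(1) is vacuous is a nice clarification, and the bookkeeping in the row/column reversal is handled correctly.
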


\subsection{Proof of Theorem~\ref{thm:row_Goulden}}
\label{sec:PF}

Now we are in the position to give a proof of Theorem~\ref{thm:row_Goulden}.
Throughout the proof, let $(\alpha_r)_{r \in \Z}$ and $(c_r)_{r \in \Z}$ be the sequences given by
$$
\alpha_r
 =
\begin{cases}
 1+u^2, &\text{if $r$ is odd,} \\
 2u,    &\text{if $r$ is even,}
\end{cases}
\quad r > 0,
\qquad
\alpha_0 = 0,
\quad
\alpha_{-r} = - \alpha_r,
$$
and
$$
c_r = \sum_{k \in \Z} \alpha_k f_{r-k},
$$
where we recall from~\eqref{eq:f_k} that $f_r =f_r(\vx)= \sum_{i \in \ZZ}
e_i(\vx) e_{i+r}(\vx)$, which implies \( f_{-r}=f_r \). 
Note that the $(r,s)$-entry of the skew-symmetric matrix $A$ given in Proposition~\ref{prop:subPf} 
is equal to $\alpha_{s-r}$ for positive integers $r$ and $s$, 
and $c_{-r} = - c_r$.
Moreover, we have
\begin{equation}\label{eq:6}
\sum_{r,s \ge 1} \alpha_{s-r} e_{r-i}(\vx) e_{s-j}(\vx) = c_{j-i}.
\end{equation}

The proof of the identities in Theorem~\ref{thm:row_Goulden} works as follows:

\begin{enumerate}
\item[(a)]
We use Proposition~\ref{prop:subPf} and Lemma~\ref{lem:minor} 
and apply the minor summation formula (Theorem~\ref{thm:IsWa}) 
to express the left-hand side as a Pfaffian.
\item[(b)]
We use one of the Gordon-type identities in Lemma~\ref{lem:Gordon}
to transform the Pfaffian obtained in~(a) into a determinant.
\item[(c)]
We perform row operations with the determinant obtained in (b) 
and use Lemma~\ref{lem:sum_det} to obtain the right-hand side.
\end{enumerate}

\begin{proof}[Proof of Theorem~\ref{thm:row_Goulden}]
First we give a proof of \eqref{eq:even+}.
By applying the minor summation formula to the matrices 
$A$ and $T$ in Proposition~\ref{prop:subPf} and
Lemma~\ref{lem:minor}, where \( m=2w \),
with the $0$-th and $0'$-th rows/columns removed, we have
\begin{align*}
\sum_{\lambda : \lambda_1 \le 2w}
 \left( u^{c(\lambda)} + u^{2w-c(\lambda)} \right) s_{\lambda}(\vx)
 &=
\sum_{\mu : \ell(\mu) \le 2w}
 \left( u^{r(\mu)} + u^{2w-r(\mu)} \right) s_{\mu'}(\vx)
   =
   2^{1-w}
\sum_{\mu : \ell(\mu) \le 2w}
 \Pf A^{I_{2w}(\mu)}
 \det T^{[2w]}_{I_{2w}(\mu)}
\\
 &=
   2^{1-w}
\sum_K \Pf A^K \det T^{[2w]}_K
 =
   2^{1-w}
\Pf_{1 \le i, j \le 2w} \left( \sum_{r,s \ge 1} A_{r,s} T_{i,r} T_{j,s} \right)
   \\
 &=
   2^{1-w}
\Pf_{1 \le i, j \le 2w} \left( c_{j-i} \right),
\end{align*}
where $K$ runs over all increasing subsequences of \( 2w \) positive integers.
Then we use Identity~\eqref{eq:G2} to obtain
$$
\Pf_{1 \le i, j \le 2w} \left( c_{j-i} \right)
 =
\frac{1}{2}
\det_{1 \le i, j \le w} \left(
 \begin{cases}
  c_i - c_{i-2}, &\text{if $j = 1$} \\
  c_{i-j+1} - c_{i-j-1} + c_{i+j-1} - c_{i+j-3}, &\text{if $2 \le j \le w$}
 \end{cases}
\right).
$$
Since $c_r = \sum_{k \ge 1} \alpha_k \left( f_{r-k} - f_{r+k} \right)$ 
with
$\alpha_{2p-1} = 1+u^2$
and $\alpha_{2p} = 2u$ for $p \ge 1$, 
we have
  $$
c_r - c_{r-2}
 =
2 (1+u^2) f_{r-1} + 2 u (f_{r-2} + f_{r}).
$$
Hence, by taking the factor $2$ out of each column except for the first column, 
we obtain
$$
\Pf_{1 \le i, j \le 2w} \left( c_{j-i} \right)
 =
\frac{1}{2}
\cdot 2^{w-1}
\det_{1 \le i, j \le w} \left(
 \begin{cases}
  (1+u^2) (f_{1-j} + f_{j-1}) + 2u (f_{2-j} + f_j),
  &\text{if $i=1$} \\
  (1+u^2) (f_{i-j} + f_{i+j-2}) 
  +
  u (f_{i-j-1} + f_{i+j-3} + f_{i-j+1} + f_{i+j-1}),
  &\text{if $i \ge 2$}
 \end{cases}
\right).
$$
In the last determinant, we subtract the first row multiplied by $u/(1+u^2)$ from the second row, 
and then subtract the $(i-1)$-st row multiplied by $u (1+u^{2i-4})/(1+u^{2i-2})$ 
from the $i$-th row for $i=3, 4, \dots, w$.
Then we see that the last determinant equals
$$
\det_{1 \le i, j \le w} \left(
 \begin{cases}
  (1+u^2) (f_{1-j} + f_{j-1}) + 2u (f_{2-j} + f_j),
   &\text{if $i=1$} \\
  \dfrac{1+u^{2i}}{1+u^{2i-2}} (f_{i-j} + f_{i+j-2})
  +
  u (f_{i-j+1} + f_{i+j-1}),
   &\text{if $i \ge 2$}
 \end{cases}
\right).
$$
Now the proof of \eqref{eq:even+} is completed by applying Lemma~\ref{lem:sum_det} 
with $\beta_1 = 1+u^2$, $\beta_i = (1+u^{2i})/(1+u^{2i-2})$ for $2
\le i \le w$, 
$\gamma_1 = 2u$, $\gamma_i = u$ for $2 \le i \le w$, and 
the row vectors $\vv_i = \left( f_{i-j+1} + f_{i+j-1} \right)_{1 \le j
  \le w}$ for $0 \le i \le w$.

\medskip
Next we prove \eqref{eq:even-}.
By applying the minor-summation formula to the matrices $A$ and $T$ 
in Proposition~\ref{prop:subPf} and Lemma~\ref{lem:minor} with \( m=2w \), we obtain
$$
\sum_{\lambda:\lambda_1 \le 2w}
 \left( u^{c(\lambda)} - u^{2w-c(\lambda)} \right) s_{\lambda}(\vx)
 =
 2^{-w}
\Pf \left( T A T^t \right).
$$
The entries of the skew-symmetric matrix $Q = T A T^t$ are given by
$$
Q_{0,0'} = 0,
\quad
Q_{0,j} = (1+u) e(\vx),
\quad
Q_{0',j} = (-1)^{j-1} (1-u) \overline{e}(\vx),
\quad
Q_{i,j} = c_{j-i},
$$
where \( 1\le i,j\le 2w \), and the last equality follows from
\eqref{eq:6} and the fact that \( T_{k,0} = T_{k,0'} = 0 \) for
\( k \ge 1 \).

By subtracting the $(i-1)$-st row/column from the $i$-th row/column for $i=2w, 2w-1, \dots, 2$, 
and by subsequently expanding the resulting Pfaffian along the $0$-th
row/column, we obtain
$$
\Pf Q = - (1+u) e(\vx) \cdot \Pf Q',
$$
where the entries of $Q'$ are given by
$$
Q'_{0',j} = 2 (-1)^j \overline{e}(\vx),
\quad
Q'_{i,j} = 2 c_{j-i} - c_{j-i+1} - c_{j-i-1}.
$$
By adding the $(i-1)$-st row/column to the $i$-th row/column for $i=2w, 2w-1, \dots, 3$, 
and by subsequently expanding the resulting Pfaffian along the $0'$-th
row/column, we have 
$$
\Pf Q
 =
(1+u) e(\vx) \cdot 2 (1-u) \overline{e}(\vx)
 \cdot \Pf_{3 \le i, j \le 2w} \left( 2 c_{j-i} - c_{j-i+2} - c_{j-i-2} \right).
$$
By using the Gordon-type formula \eqref{eq:G1}, we obtain
\begin{align*}
\Pf_{3 \le i, j \le 2w} \left( 2 c_{j-i} - c_{j-i+2} - c_{j-i-2} \right)
 &=
\det_{1 \le i, j \le w-1} \left( c_{i-j+1} - c_{i-j-1} - c_{i+j+1} + c_{i+j-1} \right)
\\
 &=
2^{w-1} 
\det_{1 \le i, j \le w-1} \left(
 (1+u^2) (f_{i-j} - f_{i+j}) + u (f_{i-j-1} + f_{i-j+1} - f_{i+j-1} - f_{i+j+1} )
\right).
\end{align*}
By subtracting the $(i-1)$-st row multiplied by $u (1-u^{2i-2})/(1-u^{2i})$ from the $i$-th row 
for $i=2, 3, \dots, w-1$, we see that the last determinant equals
$$
\det_{1 \le i, j \le w-1}
\left(
 \frac{1-u^{2i+2}}{1-u^{2i}} (f_{i-j} - f_{i+j}) + u (f_{i-j+1} - f_{i+j+1})
\right).
$$
Now we apply Lemma~\ref{lem:sum_det} with 
$\beta_i = (1-u^{2i+2})/(1-u^{2i})$, $\gamma_i = u$, for $1 \le i \le w-1$,
and $\vv_i = \left( f_{i-j+1} - f_{i+j+1} \right)_{1 \le j \le w-1}$,
for $0 \le i \le w-1$, to obtain~\eqref{eq:even-}.

\medskip
Now we turn to \eqref{eq:odd+}.
We apply the minor summation formula to the matrices $A$ and $T$ 
in Proposition~\ref{prop:subPf} and
Lemma~\ref{lem:minor}, where \( m=2w+1 \),
with the $0'$-th row/column removed to obtain
$$
\sum_{\lambda:\lambda_1 \le 2w+1}
 \left( u^{c(\lambda)} + u^{2w+1-c(\lambda)} \right) s_{\lambda}(\vx)
 =
 2^{-w}
\Pf \begin{pmatrix}
 0 & \left( (1+u) e(\vx) \right)_{1 \le j \le 2w+1} \\
 * & \left( c_{j-i} \right)_{1 \le i, j \le 2w+1}
\end{pmatrix}.
$$
By subtracting the $(i-1)$-st row/column from the $i$-th row/column 
for $i=2w+1, 2w, \dots, 2$, 
and by subsequently expanding the resulting Pfaffian along the $0$-th
row/column, we see that
$$
\Pf \begin{pmatrix}
 0 & \left( (1+u) e(\vx) \right)_{2 \le j \le 2w+1} \\
 * & \left( c_{j-i} \right)_{2 \le i, j \le 2w+1}
\end{pmatrix}
 =
(1+u) e(\vx)
 \cdot
\Pf_{2 \le i, j \le 2w+1} \left(
 2 c_{j-i} - c_{j-i+1} - c_{j-i-1}
\right).
$$
By using \eqref{eq:G3}, we have
\begin{multline*}
\Pf_{2 \le i, j \le 2w+1} \left(
 2 c_{j-i} - c_{j-i+1} - c_{j-i-1}
\right)
\\
 =
2^w \det_{1 \le i, j \le w} \left(
 (1-u+u^2) (f_{i-j} - f_{i+j-1})
  + u (f_{i-j-1} + f_{i-j} + f_{i-j+1} - f_{i+j-2} - f_{i+j-1} - f_{i+j})
\right).
\end{multline*}
Finally we subtract the $(i-1)$-st row multiplied by $u (1+u^{2i-3})/(1+u^{2i-1})$ 
from the $i$-th row for $i=w, w-1, \dots, 2$ to obtain
\begin{multline*}
\det_{1 \le i, j \le w} \left(
 (1-u+u^2) (f_{i-j} - f_{i+j-1})
  + u (f_{i-j-1} + f_{i-j} + f_{i-j+1} - f_{i+j-2} - f_{i+j-1} - f_{i+j})
\right)
\\
 =
\det_{1 \le i, j \le w} \left(
 \frac{1+u^{2i+1}}{1+u^{2i-1}} (f_{i-j} - f_{i+j-1}) + u (f_{i-j+1} - f_{i+j})
\right).
\end{multline*}
The proof of \eqref{eq:odd+} is completed by applying Lemma~\ref{lem:sum_det}.

\medskip
The proof of \eqref{eq:odd-} is the same as~\eqref{eq:odd+} up to a sign, 
so we omit it.
\end{proof}

\section{Equivalence of Theorem~\ref{thm:row_Goulden} and identities
  for classical group characters of nearly rectangular shape}
\label{sec:nearlyrect}

In this section, we explain that Theorem~\ref{thm:row_Goulden} is equivalent 
to the nearly rectangular character identities for the even orthogonal Lie algebra 
$\mathfrak{so}_{2n}$ (Theorem~\ref{thm:Kratt} below) obtained by
the third author in~\cite{Krattenthaler1998}.

\medskip
The finite dimensional irreducible representations of $\mathfrak{so}_{2n}$ are parameterized 
by their highest weights $\lambda$, where $\lambda = (\lambda_1, \dots, \lambda_n)$ 
is a sequence of integers or of half-integers such that
\begin{equation}
\label{eq:so_hw}
\lambda_1 \ge \lambda_2 \ge \dots \ge \lambda_{n-1} \ge |\lambda_n|.
\end{equation}
Let $\vx_n = (x_1, \dots, x_n)$.
We denote by $\sorth_\lambda(\vx_n)$ the character of the irreducible representation 
with highest weight $\lambda$.

The third author used tableau descriptions of the special orthogonal
characters that are derived from Lakshmibai--Seshadri paths
to prove the following character identities.

\begin{thm}
[{\sc \cite[\sc Theorem~2, Eq.~(3.7)]{Krattenthaler1998}}]
\label{thm:Kratt}
If $b$ is a nonnegative integer or half-integer and $0 \le k \le 2b$, then we have
\begin{equation}
\label{eq:Kratt}
\sorth_{(b^{n-1},b-k)}(\vx_n)
 =
\left( x_1 \cdots x_n \right)^{-b}
\underset{c(((2b)^n)/\lambda) = k}{\sum_{\lambda \subseteq ((2b)^n)}}
 s_\lambda(\vx_n).
\end{equation}
In words:
$\lambda$ runs over all partitions contained in the \( n\times (2b) \) rectangle $((2b)^n)$ 
such that the skew diagram $((2b)^n)/\lambda$ has exactly $k$ columns of
odd length. 
\end{thm}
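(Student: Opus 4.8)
My plan is to prove Theorem~\ref{thm:Kratt} by setting up a reversible dictionary between it and the infinite-variable identities of Theorem~\ref{thm:row_Goulden}, which were already established in Section~\ref{sec:rowGoulden}; read backwards, the same dictionary recovers Theorem~\ref{thm:row_Goulden} from Theorem~\ref{thm:Kratt}, which is the ``equivalence'' asserted here. The key point is that a symmetric function identity holds in $\Lambda$ if and only if it holds after specializing $\vx\mapsto\vx_n=(x_1,\dots,x_n)$ for every~$n$, so it suffices to translate, for each fixed~$n$, the $n$-variable specialization of Theorem~\ref{thm:row_Goulden} into Theorem~\ref{thm:Kratt}. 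Two elementary facts do most of the work: $s_\lambda(\vx_n)=0$ as soon as $\ell(\lambda)>n$, and rectangular complementation.

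\textbf{Specialization, truncation, coefficient extraction.} First I would put $\vx=\vx_n$ and take $w=c$ in \eqref{eq:even+} and~\eqref{eq:even-} when $c$ is an integer, respectively $w=c-\tfrac12$ in \eqref{eq:odd+} and~\eqref{eq:odd-} when $c$ is a half-integer, so that $2w$ (respectively $2w+1$) equals $2c$. Since $s_\lambda(\vx_n)=0$ for $\ell(\lambda)>n$, the sums on the left collapse from ``$\lambda_1\le 2c$'' to ``$\lambda\subseteq((2c)^n)$''. Adding the ``$+$'' and ``$-$'' member of the relevant pair cancels the $u^{2c-c(\lambda)}$ contributions, and extracting the coefficient of $u^k$ isolates $\sum_{\lambda\subseteq((2c)^n),\,c(\lambda)=k}s_\lambda(\vx_n)$ and expresses it, up to an explicit constant, as a determinantal quantity $D_{c,k}(\vx_n)$ whose entries are built from the series $f_r(\vx_n)$.

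\textbf{Complementation and inversion.} Next I would apply the involution $\lambda\mapsto\mu:=((2c)^n)/\lambda$ on partitions inside the $n\times 2c$ rectangle, $\mu_i=2c-\lambda_{n+1-i}$. A $180^\circ$ rotation identifies the skew shape $((2c)^n)/\lambda$ with the straight shape $\mu$, so the multiset of its column lengths, and hence the number of odd columns, is preserved: $c(((2c)^n)/\lambda)=c(\mu)$. Combined with the rectangular duality $s_\lambda(\vx_n)=(x_1\cdots x_n)^{2c}s_\mu(x_1^{-1},\dots,x_n^{-1})$, this rewrites the right-hand side of Theorem~\ref{thm:Kratt} as $(x_1\cdots x_n)^{c}\sum_{\mu\subseteq((2c)^n),\,c(\mu)=k}s_\mu(x_1^{-1},\dots,x_n^{-1})$. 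After the substitution $x_i\mapsto x_i^{-1}$, and using that an $\mathfrak{so}_{2n}$-character satisfies $\sorth_\nu(x_1^{-1},\dots,x_n^{-1})=\sorth_{\nu^\ast}(\vx_n)$, where $\nu^\ast$ is obtained from $\nu$ by negating its last coordinate (so $\nu^\ast=\nu$ when $n$ is even), Theorem~\ref{thm:Kratt} reduces to the single assertion that $(x_1\cdots x_n)^{-c}D_{c,k}(\vx_n)$ equals $\sorth_{(c^{n-1},\,k-c)}(\vx_n)$ up to an explicit constant; under $k\leftrightarrow 2c-k$ this is the same family of identities.

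\textbf{Matching $D_{c,k}$ with the orthogonal bialternant, and the main obstacle.} It remains to identify $D_{c,k}(\vx_n)$ with the Weyl character formula for $\sorth_{(c^{n-1},k-c)}(\vx_n)$, a ratio of $n\times n$ bialternants of the form $\det_{1\le i,j\le n}(x_i^{l_j}+x_i^{-l_j})$ with $l=(c+n-1,c+n-2,\dots,c+1,\,k-c)$. The bridge is the generating function $\sum_{r\in\Z}f_r(\vx_n)t^r=\prod_{i=1}^{n}(1+x_it)(1+x_it^{-1})$, which exhibits the entries of $D_{c,k}$ as Laurent coefficients of that product; then Cauchy--Binet, equivalently the minor summation formula (Theorem~\ref{thm:IsWa}), expands the $n\times n$ bialternant into a sum over $2c$-element index sets of products of two minors of the matrix $(e_{r-i}(\vx_n))$, and the Gordon-type reductions of Lemma~\ref{lem:Gordon} collapse that sum back to $D_{c,k}$. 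I expect this last identification to be the main obstacle: one must match the single ``shift at index $w-k$'' in the determinants of Theorem~\ref{thm:row_Goulden} with the single ``jump to $k-c$'' in the last component of the $\mathfrak{so}_{2n}$ highest weight, do so uniformly in the sign of $\lambda_n=k-c$ (the cases $k<c$, $k=c$, $k>c$, and the distinction between $SO(2n)$ and $O(2n)$ characters, all enter the $D_n$ Weyl formula slightly differently), keep track of the parity of~$n$ in the inversion symmetry, and run the half-integer (spin) case through the odd-bound identities \eqref{eq:odd+} and~\eqref{eq:odd-}. Everything else is the routine row-and-column-operation combinatorics already used in Section~\ref{sec:rowGoulden}.
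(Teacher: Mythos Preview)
Your high-level plan---deduce Theorem~\ref{thm:Kratt} from Theorem~\ref{thm:row_Goulden} by specializing to $n$ variables for every~$n$---is exactly the paper's strategy in Section~\ref{sec:nearlyrect}. Where you diverge is in the execution of what you rightly call the ``main obstacle'': identifying the determinants $D_{c,k}(\vx_n)$ built from the $f_r(\vx_n)$ with the $\mathfrak{so}_{2n}$-characters.

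The paper does this in one stroke using two ingredients you do not mention. First, the elementary identity
\[
f_r(\vx_n)=(x_1\cdots x_n)\,e_{n-r}(\vx_n^{\pm1}),
\]
which is the coefficient-by-coefficient reading of your generating function $\sum_r f_r(\vx_n)t^r=\prod_i(1+x_it)(1+x_it^{-1})$. Second, and crucially, the dual Jacobi--Trudi formulas of Proposition~\ref{prop:so_JT}, which express $\orth_\lambda(\vx_n)$ and $\overline{\orth}_\lambda(\vx_n)$ (and their spin analogues) directly as $\lambda_1\times\lambda_1$ determinants in $e_r(\vx_n^{\pm1})$. Once you substitute the first identity into the determinants of Theorem~\ref{thm:row_Goulden}, Proposition~\ref{prop:so_JT} with $\lambda=(w^{n-1},w-k)$ matches them \emph{term by term} with $(x_1\cdots x_n)^w\orth_{(w^{n-1},w-k)}(\vx_n)$ and $(x_1\cdots x_n)^w\overline{\orth}_{(w^{n-1},w-k)}(\vx_n)$; no further manipulation is needed. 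The parity of~$n$ and the distinction between $\sorth_\lambda$ and $\sorth_{\lambda^\#}$ are then handled by the simple observation $c(((2w)^n)/\lambda)=c(\lambda)$ for $n$ even and $=2w-c(\lambda)$ for $n$ odd, without any variable inversion or rectangular complementation of Schur functions.

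By contrast, your proposed route---matching against the $n\times n$ Weyl bialternant via Cauchy--Binet and then Gordon reductions---is not a workable plan as stated. The bialternant is an $n\times n$ determinant in the $x_i$, whereas $D_{c,k}$ is a $w\times w$ determinant in the $f_r$; passing between the two is precisely the content of a Jacobi--Trudi-type theorem (Proposition~\ref{prop:so_JT}), not of Cauchy--Binet. Gordon's lemma reduces Pfaffians to determinants, which is the wrong direction here. Your complementation/inversion detour, while correct, is also unnecessary: the paper never inverts variables and reaches Theorem~\ref{thm:Kratt} in its original form directly. In short, the missing idea is Proposition~\ref{prop:so_JT}; with it, your ``main obstacle'' disappears, and without it your sketch does not close.
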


For a sequence $\lambda$ satisfying \eqref{eq:so_hw}, we put
$$
\lambda^{\#} = (\lambda_1, \dots, \lambda_{n-1}, - \lambda_n),
$$
and define
\begin{align*}
\orth_\lambda(\vx_n)
 &=
\begin{cases}
 \sorth_\lambda(\vx_n) + \sorth_{\lambda^{\#}}(\vx_n), &\text{if $\lambda_n \neq 0$}, \\
 \sorth_\lambda(\vx_n), &\text{if $\lambda_n = 0$},
\end{cases}
\\
\overline{\orth}_\lambda(\vx_n)
 &=
\begin{cases}
 \sorth_\lambda(\vx_n) - \sorth_{\lambda^{\#}}(\vx_n), &\text{if $\lambda_n \neq 0$}, \\
 0, &\text{if $\lambda_n = 0$}.
\end{cases}
\end{align*}
Then these Laurent polynomials can be described in terms of 
elementary symmetric polynomials $e_r(\vx_n^{\pm 1}) = e_r(x_1, \dots, x_n, x_1^{-1}, \dots, x_n^{-1})$.
Given a partition $\lambda$ of length $\le n$, 
we write $\lambda+1/2 = (\lambda_1 + 1/2, \dots, \lambda_n + 1/2)$.

\begin{prop}[Cf.~\cite{Koike1997,ProcAK}]
\label{prop:so_JT}
\leavevmode

\begin{enumerate}
\item[(1)]
For a partition $\lambda$ of length $\le n$, we have
$$
\orth_\lambda(\vx_n)
 =
\frac{1}{2}
\det_{1 \le i, j \le \lambda_1} \left(
 e_{\lambda'_i - i + j}(\vx_n^{\pm 1}) + e_{\lambda'_i - i - j +2}(\vx_n^{\pm 1})
\right),
$$
and, if $\ell(\lambda) = n$, we have 
$$
\overline{\orth}_\lambda(\vx_n)
 =
\prod_{i=1}^n \left( x_i - x_i^{-1} \right) \cdot
\det_{2 \le i, j \le \lambda_1} \left(
 e_{\lambda'_i - i + j}(\vx_n^{\pm 1}) - e_{\lambda'_i - i - j}(\vx_n^{\pm 1})
\right).
$$
\item[(2)]
For a partition $\lambda$ of length $\le n$, we have
\begin{align*}
\orth_{\lambda+1/2}(\vx_n)
 &=
\prod_{i=1}^n \left( x_i^{1/2} + x_i^{-1/2} \right) \cdot
\det_{1 \le i, j \le \lambda_1} \left(
 e_{\lambda'_i - i + j}(\vx_n^{\pm 1}) - e_{\lambda'_i - i - j + 1}(\vx_n^{\pm 1})
\right),
\\
\overline{\orth}_{\lambda+1/2}(\vx_n)
 &=
\prod_{i=1}^n \left( x_i^{1/2} - x_i^{-1/2} \right) \cdot
\det_{1 \le i, j \le \lambda_1} \left(
 e_{\lambda'_i - i + j}(\vx_n^{\pm 1}) + e_{\lambda'_i - i - j + 1}(\vx_n^{\pm 1})
\right).
\end{align*}
\end{enumerate}
\end{prop}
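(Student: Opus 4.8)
The plan is to obtain Proposition~\ref{prop:so_JT} from the Weyl character formula for $\mathfrak{so}_{2n}$, by rewriting $\orth_\lambda$ and $\overline{\orth}_\lambda$ first as quotients of $n\times n$ bialternant-type determinants and then, after extracting the appropriate common factors, as dual Jacobi--Trudi determinants of size $\lambda_1$ in the $2n$ variables $\vx_n^{\pm1}=(x_1,\dots,x_n,x_1^{-1},\dots,x_n^{-1})$. In substance these are the orthogonal Jacobi--Trudi identities of Koike~\cite{Koike1997} and Proctor~\cite{ProcAK}; the content of the proof is to record them in exactly the normalization needed later, and the argument below is the one we would spell out.

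\emph{Step 1 (bialternant form).} Write $\delta=(n-1,n-2,\dots,1,0)$, set $\ell=\lambda+\delta$ (or $\ell=\lambda+\tfrac12+\delta$ in the half-integer case), and put
\[
A^{\pm}_{\mu}=\det_{1\le i,j\le n}\left(x_i^{\mu_j}\pm x_i^{-\mu_j}\right).
\]
The Weyl character formula for type $D_n$ expresses $\sorth_\lambda(\vx_n)$ as $(A^+_\ell+A^-_\ell)/A^+_\delta$; since passing from $\lambda$ to $\lambda^{\#}$ merely flips the sign of the last coordinate of $\ell$, one has $A^+_{\ell^{\#}}=A^+_\ell$ and $A^-_{\ell^{\#}}=-A^-_\ell$, whence $\orth_\lambda(\vx_n)$ is a constant multiple of $A^+_\ell/A^+_\delta$ and $\overline{\orth}_\lambda(\vx_n)$ a constant multiple of $A^-_\ell/A^+_\delta$, and likewise in the half-integer case. \emph{Step 2 (extracting the prefactors).} Every entry $x_i^{\ell_j}-x_i^{-\ell_j}$ with $\ell_j\ge1$ is divisible by $x_i-x_i^{-1}$, and every entry $x_i^{\ell_j}\pm x_i^{-\ell_j}$ with $\ell_j$ a positive half-odd-integer is divisible by $x_i^{1/2}\pm x_i^{-1/2}$; in the cases at hand all the $\ell_j$ are $\ge1$ (this is exactly where the hypothesis $\ell(\lambda)=n$ enters in the $\overline{\orth}_\lambda$ statement) or positive half-odd-integers, so one pulls $\prod_i(x_i-x_i^{-1})$, respectively $\prod_i(x_i^{1/2}\pm x_i^{-1/2})$, out of the numerator together with the matching factor of the denominator, leaving a quotient of determinants that is an honest symmetric Laurent polynomial.

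\emph{Step 3 (dualization).} Finally I would convert this quotient into the asserted $\lambda_1\times\lambda_1$ determinant. The key input is the generating function identity
\[
\sum_{r\ge0}e_r(\vx_n^{\pm1})\,t^r=\prod_{i=1}^{n}(1+x_it)(1+x_i^{-1}t),
\]
together with the fact that $\sum_r(-1)^re_r(\vx_n^{\pm1})t^r$ is the reciprocal series of $\sum_rh_r(\vx_n^{\pm1})t^r$: expressing the $h$-form orthogonal Jacobi--Trudi determinant (of size $\ell(\lambda)$, carrying the same reflected $i\pm j$ structure) through its inverse matrix and applying Jacobi's theorem on complementary minors turns it into the $e$-form determinant (of size $\lambda_1$), just as in the $GL$ identity $\det(h_{\lambda_i-i+j})=\det(e_{\lambda'_i-i+j})$. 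The hard part will be this last step: because $A^+_\mu$ is a \emph{symmetrized} bialternant rather than a Vandermonde-type determinant, the clean $GL$ dualization does not transcribe verbatim, and one must track how the ``$+$''/``$-$'' symmetrization interacts with the row and column operations and, above all, what happens in the exceptional first (and, for $\overline{\orth}_\lambda$, second) row and column --- which is precisely where the shifts $-j+2$, $-j+1$, $-j$ and the ranges $1\le i,j\le\lambda_1$ versus $2\le i,j\le\lambda_1$ come from. A less error-prone route, which we would likely adopt in the final write-up, is to take the $h$-form orthogonal Jacobi--Trudi identities for $\orth_\lambda$ and $\overline{\orth}_\lambda$ verbatim from~\cite{Koike1997,ProcAK} and then only perform the standard (but still boundary-sensitive) $h\leftrightarrow e$ conversion in the $2n$ variables $\vx_n^{\pm1}$, pinning down the normalizing constants --- the $\tfrac12$ and the $\prod_i(\cdot)$ prefactors --- by specializing to small $n$ or to $\lambda=\emptyset$.
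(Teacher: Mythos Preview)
The paper does not actually prove Proposition~\ref{prop:so_JT}: it is stated with the attribution ``Cf.~\cite{Koike1997,ProcAK}'' and used as a known input, with no proof given. So there is no ``paper's own proof'' to compare against; the authors simply import these orthogonal Jacobi--Trudi identities from the literature.

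Your outline is a reasonable sketch of the classical route to such formulas (Weyl bialternant $\to$ prefactor extraction $\to$ $h\leftrightarrow e$ dualization), and indeed this is essentially what one finds in the references of Koike and Proctor. Two remarks are in order. First, you yourself flag that Step~3 is the genuinely nontrivial part and that you would ``likely'' fall back on quoting the $h$-form identities from~\cite{Koike1997,ProcAK} and only doing the $h\leftrightarrow e$ conversion; if that is the plan, then your proof reduces to the same citation the paper makes, plus a routine dualization. Second, a small warning on Step~1: the type-$D_n$ Weyl character formula has a well-known factor-of-$2$ subtlety in the denominator (the last column of $A^+_\delta$ is $x_i^0+x_i^{-0}=2$), and this is exactly what produces the leading $\tfrac12$ in the $\orth_\lambda$ formula and governs whether the $\lambda_n=0$ case needs separate handling; your Step~1 glosses over this, so if you do write out the argument rather than cite it, be sure to track that constant carefully rather than ``pin it down by specializing'' at the end.
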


Now we prove the equivalence between Theorems~\ref{thm:row_Goulden} and \ref{thm:Kratt}.
Note that a symmetric function identity $f(\vx) = g(\vx)$ holds for infinitely many variables $\vx$ 
if and only if $f(\vx_n) = g(\vx_n)$ for all positive integers~$n$.

First we show that Equations~\eqref{eq:even+} and~\eqref{eq:even-} 
are equivalent to Theorem~\ref{thm:Kratt} in the case where $b$ is a
positive integer. 
Under the specialization $x_{n+1} = x_{n+2} = \dots = 0$, we have
$$
f_r(\vx_n)
 =
\sum_i e_i(\vx_n) \cdot  e_{r+i}(\vx_n)
 =
\sum_i e_i(\vx_n) \cdot (x_1 \cdots x_n) e_{n-r-i}(\vx_n^{-1})
 =
(x_1 \cdots x_n) e_{n-r}(\vx_n^{\pm 1}),
$$
where $\vx_n^{-1} = (x_1^{-1}, \dots, x_n^{-1})$, and
$$
e(\vx_n)
 = 
(x_1 \cdots x_n)^{1/2} \prod_{i=1}^n \left( x_i^{1/2} + x_i^{-1/2} \right),
\quad
\overline{e}(\vx_n)
 = 
(-1)^n (x_1 \cdots x_n)^{1/2} \prod_{i=1}^n \left( x_i^{1/2} - x_i^{-1/2} \right).
$$
Comparing the determinants on the right-hand sides of~\eqref{eq:even+} and~\eqref{eq:even-} 
with the Jacobi--Trudi-type identities in Proposition~\ref{prop:so_JT}(1) with \( \lambda=(w^{n-1},w-k) \), we obtain
$$
\frac{1}{2}
 \det_{1 \le i, j \le w} \left(
  \begin{cases}
   f_{i-j}(\vx_n) + f_{i+j-2}(\vx_n), &\mbox{if \(1 \le i \le w-k \)} \\
   f_{i-j+1}(\vx_n) + f_{i+j-1}(\vx_n), &\mbox{if \( w-k+1 \le i \le w \)}
  \end{cases}
 \right)
 =
(x_1 \cdots x_n)^w \cdot \orth_{(w^{n-1}, w-k)}(\vx_n)
$$
and
$$
e(\vx_n) \overline{e}(\vx_n) \cdot 
 \det_{1 \le i, j \le w-1} \left(
  \begin{cases}
   f_{i-j}(\vx_n) - f_{i+j}(\vx_n), &\mbox{if \( 1 \le i \le w-k-1 \)} \\
   f_{i-j+1}(\vx_n) - f_{i+j+1}(\vx_n), &\mbox{if \( w-k \le i \le w-1 \)}
  \end{cases}
 \right)
 =
(-1)^n (x_1 \cdots x_n)^w \cdot
\overline{\orth}_{(w^{n-1},w-k)}(\vx_n).
$$
By the definition of $\orth_\lambda(\vx_n)$ and $\overline{\orth}_\lambda(\vx_n)$, 
we have
\begin{align*}
\sum_{k=0}^w \left( u^k + u^{2w-k} \right) \orth_{(w^{n-1}, w-k)}(\vx_n)
 &=
\sum_{k=0}^{2w} \left( u^k + u^{2w-k} \right) \sorth_{(w^{n-1}, w-k)}(\vx_n),
\\
\sum_{k=0}^{w-1} \left( u^k - u^{2w-k} \right) \overline{\orth}_{(w^{n-1}, w-k)}(\vx_n)
 &=
\sum_{k=0}^{2w} \left( u^k - u^{2w-k} \right) \sorth_{(w^{n-1}, w-k)}(\vx_n).
\end{align*}
Hence \eqref{eq:even+} and \eqref{eq:even-} become
\begin{align*}
(x_1 \cdots x_n)^{-w}
\sum_{\lambda_1 \le 2w} (u^{c(\lambda)} + u^{2w-c(\lambda)}) s_{\lambda}(\vx_n)
 &=
\sum_{k=0}^{2w} \left( u^k + u^{2w-k} \right) \sorth_{(w^{n-1}, w-k)}(\vx_n),
\\
(-1)^n (x_1 \cdots x_n)^{-w}
\sum_{\lambda_1 \le 2w} (u^{c(\lambda)} - u^{2w-c(\lambda)}) s_{\lambda}(\vx_n)
 &=
\sum_{k=0}^{2w} \left( u^k - u^{2w-k} \right) \sorth_{(w^{n-1}, w-k)}(\vx_n).
\end{align*}
This pair of identities is equivalent to
$$
\sum_{k=0}^{2w} u^k \sorth_{(w^{n-1}, w-k)}(\vx_n)
 =
(x_1 \cdots x_n)^{-w}
\begin{cases}
 \sum_{\lambda_1 \le 2w} u^{c(\lambda)} s_{\lambda}(\vx_n), &\text{if $n$ is even,} \\
 \sum_{\lambda_1 \le 2w} u^{2w-c(\lambda)} s_{\lambda}(\vx_n), &\text{if $n$ is odd.}
\end{cases}
$$
If $\lambda \subseteq ((2w)^n)$, 
then
$$
c((2w)^n / \lambda)
 = 
\begin{cases}
 c(\lambda), &\text{if $n$ is even,} \\
 2w - c(\lambda), &\text{if $n$ is odd.}
\end{cases}
$$
Therefore we obtain
$$
\sum_{k=0}^{2w} u^k \sorth_{(w^{n-1}, w-k)}(\vx_n)
 =
(x_1 \cdots x_n)^{-w}
\sum_{\lambda \subseteq ((2w)^n)} u^{c((2w)^n/\lambda)} s_\lambda(\vx_n),
$$
which is the generating function expression of the third author's
formula~\eqref{eq:Kratt}  
in the case where $b=w$ is a positive integer.

\medskip
By using the Jacobi--Trudi-type identities in Proposition~\ref{prop:so_JT}(2),
we can show that the pair of identities~\eqref{eq:odd+} and~\eqref{eq:odd-} 
is equivalent to the third author's formula~\eqref{eq:Kratt} 
in the case where $b$ is a positive half-integer.

\begin{rem}
Along a similar argument, we can see that 
Goulden's original identity~\eqref{eq:Goulden2m,k} is equivalent to 
the nearly rectangular character identity for the symplectic Lie algebra 
\cite[Theorem~2, Eq.~(3.6)]{Krattenthaler1998}.
\end{rem}

\section{Bounded Littlewood identities and skewing operators:
  proofs of Theorems~\ref{thm:BK2},
\ref{thm:Goulden2}, and~\ref{thm:row_Goulden2-2pop}}\label{sec:skew}

In this section, we provide the proofs of the
formulations of the (refined) bounded Littlewood identities
in terms of skewing operators, given in Theorems~\ref{thm:BK2},
\ref{thm:Goulden2}, and~\ref{thm:row_Goulden2-2pop}.

\medskip
We will use the following restatement of \Cref{thm:row_Goulden}.

\begin{thm}\label{thm:row_Goulden2-2}
For  nonnegative integers \( 0\le k\le w \),
we have
\begin{align}
  \label{eq:schur-odd-k}
  \underset {c(\lambda) = k}
  {\sum_{\lambda:\lambda_1 \le 2w+1}} s_{\lambda}(\vx)
  +\underset {c(\lambda) = 2w+1-k}
  {\sum_{\lambda:\lambda_1 \le 2w+1}} s_{\lambda}(\vx)
  &= e(\vx) 
 \det_{1 \le i, j \le w} \left(
   f_{i+\chi(i>w-k)-j}(\vx) - f_{i+\chi(i>w-k)+j-1}(\vx)
   \right),\\
  \label{eq:schur-odd-2h+1-k}
  \underset {c(\lambda) = k}
  {\sum_{\lambda:\lambda_1 \le 2w+1}} s_{\lambda}(\vx)
  -\underset {c(\lambda) = 2w+1-k}
  {\sum_{\lambda:\lambda_1 \le 2w+1}} s_{\lambda}(\vx)
  &= \overline{e}(\vx) 
 \det_{1 \le i, j \le w} \left(
   f_{i+\chi(i>w-k)-j}(\vx) + f_{i+\chi(i>w-k)+j-1}(\vx)
   \right),\\
  \label{eq:schur-even-k}
  \underset {c(\lambda) = k}
  {\sum_{\lambda:\lambda_1 \le 2w}} s_{\lambda}(\vx)
  +\underset {c(\lambda) = 2w-k}
  {\sum_{\lambda:\lambda_1 \le 2w}} s_{\lambda}(\vx)
  &= \left(\frac{1}{2} \right)^{\chi(k<w) }
 \det_{1 \le i, j \le w} \left(
   f_{i+\chi(i>w-k)-j}(\vx) + f_{i+\chi(i>w-k)+j-2}(\vx)
   \right),
\end{align}
and, for  \( 0\le k\le w-1 \),
\begin{align}
  \label{eq:schur-even-2h-k}
  \underset {c(\lambda) = k}
  {\sum_{\lambda:\lambda_1 \le 2w}} s_{\lambda}(\vx)
  -\underset {c(\lambda) = 2w-k}
  {\sum_{\lambda:\lambda_1 \le 2w}} s_{\lambda}(\vx)
  &= e(\vx) \overline{e}(\vx) 
 \det_{2 \le i, j \le w} \left(
   f_{i+\chi(i>w-k)-j}(\vx) - f_{i+\chi(i>w-k)+j-2}(\vx)
   \right).
\end{align}
\end{thm}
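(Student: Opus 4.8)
The plan is to deduce Theorem~\ref{thm:row_Goulden2-2} directly from Theorem~\ref{thm:row_Goulden}, with no new analytic input, by extracting the coefficients of appropriate powers of the indeterminate~$u$. The key remark is that the single determinants in Theorem~\ref{thm:row_Goulden2-2} are precisely the individual summands of the ``sum over $k$'' determinants in Theorem~\ref{thm:row_Goulden}: rewriting the two-line case distinction in the entries of the latter by means of the indicator $\chi(i>w-k)$ transforms, say, the $k$-th determinant in~\eqref{eq:even+} into the determinant on the right-hand side of~\eqref{eq:schur-even-k} (for $i\le w-k$ the entry is $f_{i-j}(\vx)+f_{i+j-2}(\vx)$, and for $i>w-k$ it is $f_{i-j+1}(\vx)+f_{i+j-1}(\vx)$), and the same holds for the three other pairs of identities.

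First I would rewrite the left-hand side of each identity in Theorem~\ref{thm:row_Goulden} by grouping, for each~$k$, the partitions $\lambda$ with $\lambda_1\le m$ ($m=2w$ or $m=2w+1$) according to whether $c(\lambda)=k$ or $c(\lambda)=m-k$. Since $u^k\pm u^{m-k}$ depends only on the unordered pair $\{k,m-k\}$, each left-hand side becomes $\sum_k(u^k\pm u^{m-k})\,C_k^{\pm}(\vx)$, where $C_k^{+}(\vx)=\sum_{c(\lambda)=k}s_\lambda(\vx)+\sum_{c(\lambda)=m-k}s_\lambda(\vx)$ and $C_k^{-}(\vx)=\sum_{c(\lambda)=k}s_\lambda(\vx)-\sum_{c(\lambda)=m-k}s_\lambda(\vx)$ (all sums over $\lambda_1\le m$), with $k$ running over $0\le k\le\lfloor m/2\rfloor$ in the ``$+$'' case and over $0\le k<m/2$ in the ``$-$'' case. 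The right-hand sides of Theorem~\ref{thm:row_Goulden} are already displayed in exactly this form. Because the families $\{u^k+u^{m-k}:0\le k\le\lfloor m/2\rfloor\}$ and $\{u^k-u^{m-k}:0\le k<m/2\}$ consist of polynomials with pairwise disjoint supports in $\{1,u,\dots,u^m\}$, they are linearly independent, so one may read off the $C_k^{\pm}(\vx)$ from the matching determinants, up to the scalar prefactors $e(\vx)$, $\overline{e}(\vx)$, $e(\vx)\overline{e}(\vx)$ appearing in Theorem~\ref{thm:row_Goulden}. This gives~\eqref{eq:schur-odd-k}, \eqref{eq:schur-odd-2h+1-k} and~\eqref{eq:schur-even-k} at once, and --- after the index shift $(i,j)\mapsto(i-1,j-1)$ that carries the $(w-1)\times(w-1)$ determinant in~\eqref{eq:even-} to the determinant indexed by $2\le i,j\le w$ --- also~\eqref{eq:schur-even-2h-k}.

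The one point needing a little care, and hence the ``main obstacle'' (such as it is), is the boundary bookkeeping. In~\eqref{eq:even+} with $m=2w$ the central pair $\{k,m-k\}$ collapses to the singleton $\{w\}$, so the coefficient of $u^w$ equals $2\sum_{c(\lambda)=w}s_\lambda(\vx)$ on the left but $D_w$ (rather than $\tfrac12 D_w$) on the right, $D_w$ being the $k=w$ determinant; this is exactly what the exponent $\chi(k<w)$ on the factor $\tfrac12$ in~\eqref{eq:schur-even-k} records. For the ``$-$'' identities one uses that the central index contributes nothing: $u^w-u^w=0$ when $m=2w$, so $k$ runs only up to $w-1$, matching both the $(w-1)\times(w-1)$ determinant and the range stated in~\eqref{eq:schur-even-2h-k}, while for $m=2w+1$ there is no central index and $k$ runs up to~$w$. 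With these conventions settled, checking the determinant entries, prefactors, and summation ranges is entirely routine.
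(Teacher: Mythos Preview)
Your proposal is correct and matches the paper's own treatment: the paper introduces Theorem~\ref{thm:row_Goulden2-2} explicitly as a ``restatement of Theorem~\ref{thm:row_Goulden}'' and gives no separate proof, the intended derivation being exactly the coefficient extraction with respect to the linearly independent families $\{u^k\pm u^{m-k}\}$ that you describe. Your handling of the boundary cases (the factor $(\tfrac12)^{\chi(k<w)}$ at $k=w$ for $m=2w$, and the index shift to $2\le i,j\le w$ in~\eqref{eq:schur-even-2h-k}) is accurate.
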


We begin with an auxiliary result that describes the action of the
skewing operator $p_1^\perp$ on the series 
$f_i(\vx)$ given in \eqref{eq:f_k}. We remind the reader that the
definition of the skewing operators is given at the end of
Section~\ref{sec:pre}.
  
\begin{lem} \label{lem:pop} 
For integers \( i \) and \( j\geq 0 \), we have
\begin{equation}\label{eq:popLemma}
  (\pop)^{j} f_i(\vx) = \sum_{r=0}^{j} \binom{j}{r}f_{i-j+2r}(\vx).
\end{equation}
In particular, \( \pop f_i(\vx)=f_{i-1}(\vx)+f_{i+1}(\vx) \).
\end{lem}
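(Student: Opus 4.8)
The plan is to establish \eqref{eq:popLemma} directly from the definition of the skewing operator $\pop$, reducing everything to the known action of $\pop$ on Schur functions, and then to iterate. Recall that $\pop$ is the adjoint of multiplication by $p_1(\vx)$ with respect to the Hall inner product, and since multiplication by $p_1$ is the operation $s_\mu \mapsto \sum_{\nu} s_\nu$ where $\nu$ ranges over partitions obtained from $\mu$ by adding a single box (the Pieri rule for $p_1 = s_{(1)}$), its adjoint is $\pop s_\lambda = \sum_{\kappa} s_\kappa$, where $\kappa$ ranges over partitions obtained from $\lambda$ by \emph{removing} a single box. Equivalently, $\pop$ is a derivation-like operator; in fact it is well known that $\pop = \partial/\partial p_1$ acts as a derivation on the polynomial ring $\QQ[p_1,p_2,\dots]$, so $\pop(fg) = (\pop f)g + f(\pop g)$ for all symmetric functions $f,g$. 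The first step is therefore to record the single-step identity $\pop f_i(\vx) = f_{i-1}(\vx) + f_{i+1}(\vx)$, after which the general formula follows by an easy induction.

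First I would prove the case $j=1$. Starting from $f_i(\vx) = \sum_{n\ge 0} e_n(\vx)\,e_{n+i}(\vx)$ and using that $\pop$ is a derivation together with the classical fact $\pop e_n(\vx) = e_{n-1}(\vx)$ (which is immediate from $e_n = \sum_{r,s}\dots$, or from the Pieri/branching rule for $e_n = s_{(1^n)}$, since removing a box from a single column of length $n$ yields a column of length $n-1$), I get
\begin{align*}
  \pop f_i(\vx)
  &= \sum_{n\ge 0}\Bigl( (\pop e_n(\vx))\,e_{n+i}(\vx) + e_n(\vx)\,(\pop e_{n+i}(\vx))\Bigr)\\
  &= \sum_{n\ge 0} e_{n-1}(\vx)\,e_{n+i}(\vx) + \sum_{n\ge 0} e_n(\vx)\,e_{n+i-1}(\vx)\\
  &= f_{i+1}(\vx) + f_{i-1}(\vx),
\end{align*}
where in the last line the first sum is reindexed $m = n-1$ to give $\sum_{m} e_m(\vx) e_{m+i+1}(\vx) = f_{i+1}(\vx)$ (the $m=-1$ term vanishes since $e_{-1}=0$), and the second sum is literally $f_{i-1}(\vx)$. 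One should note that the identity $\pop e_n = e_{n-1}$, and more generally the derivation property, hold for the completed algebra $\hat\Lambda$ as well, so there is no convergence issue with the infinite sum defining $f_i$; each monomial involves only finitely many variables and the manipulation is term-by-term legitimate.

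Next I would prove \eqref{eq:popLemma} by induction on $j$, the base case $j=0$ being trivial. Assuming the formula for $j$, I apply $\pop$ once more and use the $j=1$ case on each term:
\begin{align*}
  (\pop)^{j+1} f_i(\vx)
  &= \pop\sum_{r=0}^{j}\binom{j}{r} f_{i-j+2r}(\vx)
   = \sum_{r=0}^{j}\binom{j}{r}\bigl(f_{i-j+2r-1}(\vx) + f_{i-j+2r+1}(\vx)\bigr).
\end{align*}
Collecting the coefficient of $f_{i-(j+1)+2s}(\vx)$ for $0\le s\le j+1$, the term $f_{i-j+2r-1}$ contributes when $2r-1 = 2s - (j+1) + j = 2s-1$, i.e.\ $r=s$, and the term $f_{i-j+2r+1}$ contributes when $r = s-1$; hence the coefficient is $\binom{j}{s} + \binom{j}{s-1} = \binom{j+1}{s}$ by Pascal's rule (with the usual conventions $\binom{j}{-1}=\binom{j}{j+1}=0$ handling the boundary values $s=0$ and $s=j+1$). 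This completes the induction and hence the proof. I do not expect any real obstacle here: the only point requiring a moment's care is justifying that $\pop$ acts as a derivation on the completion and that term-by-term differentiation of the infinite sum $\sum_n e_n e_{n+i}$ is valid, which follows because $\pop$ lowers degree by exactly one and each graded piece is a finite sum.
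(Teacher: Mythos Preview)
Your proof is correct and follows essentially the same approach as the paper: establish the $j=1$ case via the derivation property of $\pop$ together with $\pop e_n = e_{n-1}$, then induct using Pascal's rule. The only cosmetic difference is that the paper writes the induction step as $(\pop)^{j+1} f_i = (\pop)^j(\pop f_i) = (\pop)^j(f_{i+1}+f_{i-1})$ and then applies the hypothesis, whereas you apply the hypothesis first and then $\pop$; the computations are otherwise identical.
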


\begin{proof}
 For the case \( j =0 \) the claim is obvious. Now we use induction on \( j\geq 1 \). 
 For \( j=1 \), since \( \pop \) is a derivation, we have
  \begin{align*}
  \pop f_i(\vx)
    &= \sum_{n\in\ZZ} \left ((\pop e_n(\vx)) e_{n+i}(\vx) + e_n(\vx) (\pop  e_{n+i}(\vx)) \right)\\
    &= \sum_{n\in\ZZ} \left( e_{n-1}(\vx) e_{n+i}(\vx) + e_n(\vx) e_{n+i-1}(\vx) \right)\\
    &= f_{i+1}(\vx)+f_{i-1}(\vx).
  \end{align*} 
To get the second line, we have used that
\( p_k^\perp e_n(\vx) = (-1)^{k-1} e_{n-k}(\vx) \),
as is well known~\cite[p.~76]{Macdonald}.

  Suppose that \eqref{eq:popLemma} holds for some \( j\geq 1 \). Then,
  \begin{align*}
    (\pop)^{j+1} f_i(\vx)
    &= (\pop)^{j}(f_{i+1}(\vx)+f_{i-1}(\vx))\\
    &= \sum_{r=0}^{j}  \binom{j}{r}f_{(i+1)-j+2r}(\vx)
      + \sum_{r=0}^{j}\binom{j}{r}f_{(i-1)-j+2r}(\vx)\\
    &= \sum_{r=1}^{j+1}  \binom{j}{r-1}f_{i-(j+1)+2r}(\vx)
      + \sum_{r=0}^{j}\binom{j}{r}f_{i-(j+1)+2r}(\vx)\\
     &= \sum_{r=0}^{j+1} \binom{j+1}{r}f_{i-(j+1)+2r}(\vx)
  \end{align*}
 so that \eqref{eq:popLemma} holds for all \( j\geq 0\). 
 \end{proof}

This lemma has the following consequence.

\begin{lem}\label{lem:col_op}
  For any integers \( i \) and \( j\geq 1 \),  
  \begin{align}
  \label{eq:pop_h}
     (\pop)^{j-1} (f_{i-1}(\vx) \pm f_{i+1}(\vx))
     &= f_{i-j}(\vx)\pm f_{i+j}(\vx)
     + \sum_{r=1}^{j-1} \binom{j-1}{r}
     (f_{i-(j-2r)}(\vx) \pm f_{i+(j-2r)}(\vx)),\\
  \label{eq:pop_p}
    (\pop)^{j-1}(f_{i-1}(\vx)\pm f_{i}(\vx))
     &=f_{i-j}(\vx)\pm f_{i+j-1}(\vx) 
     +\sum_{r=1}^{j-1}\binom{j-1}{r} (f_{i-(j-2r)}(\vx)\pm f_{i+(j-2r)-1}(\vx)).
  \end{align}
\end{lem}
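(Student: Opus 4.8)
The plan is to derive both identities directly from Lemma~\ref{lem:pop}, since the left-hand sides are simply iterated applications of $\pop$ to the two-term combinations $f_{i-1}(\vx)\pm f_{i+1}(\vx)$ and $f_{i-1}(\vx)\pm f_i(\vx)$. First I would prove \eqref{eq:pop_h}. By linearity of $(\pop)^{j-1}$ and two applications of \eqref{eq:popLemma} (to $f_{i-1}$ and to $f_{i+1}$), we get
\[
(\pop)^{j-1}\bigl(f_{i-1}(\vx)\pm f_{i+1}(\vx)\bigr)
=\sum_{r=0}^{j-1}\binom{j-1}{r}\bigl(f_{(i-1)-(j-1)+2r}(\vx)\pm f_{(i+1)-(j-1)+2r}(\vx)\bigr)
=\sum_{r=0}^{j-1}\binom{j-1}{r}\bigl(f_{i-j+2r}(\vx)\pm f_{i-j+2r+2}(\vx)\bigr).
\]
Writing the index as $i-(j-2r')$ for the first family (with $r'=r$, ranging over $0\le r'\le j-1$) and as $i+(j-2r'')$ for the second family (with $r''=j-1-r$, also ranging over $0\le r''\le j-1$, and using $\binom{j-1}{r}=\binom{j-1}{j-1-r}$), we see both sums run over the symmetric index set $\{-(j-2),-(j-4),\dots,j-4,j-2\}\cup\{j\}\cup\{-j\}$ appropriately; matching the $r=0$ and $r=j-1$ terms gives the isolated $f_{i-j}(\vx)\pm f_{i+j}(\vx)$, and the middle terms $1\le r\le j-1$ (equivalently $1\le r''\le j-1$) combine to $\sum_{r=1}^{j-1}\binom{j-1}{r}\bigl(f_{i-(j-2r)}(\vx)\pm f_{i+(j-2r)}(\vx)\bigr)$, which is exactly the claimed right-hand side. (One checks the $\pm$ sign is carried consistently because the reindexing $r\mapsto j-1-r$ maps the $f_{i+\bullet}$ part of the $r$-th term onto the $f_{i-\bullet}$ shape without introducing a sign.)

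The proof of \eqref{eq:pop_p} is the same computation with the shift $+1$ replaced by $0$ in the second term. Applying \eqref{eq:popLemma} to $f_{i-1}(\vx)$ and $f_i(\vx)$ separately yields
\[
(\pop)^{j-1}\bigl(f_{i-1}(\vx)\pm f_i(\vx)\bigr)
=\sum_{r=0}^{j-1}\binom{j-1}{r}\bigl(f_{i-j+2r}(\vx)\pm f_{i-j+2r+1}(\vx)\bigr).
\]
Here the first family again contributes $f_{i-j}(\vx)$ at $r=0$ together with $\sum_{r=1}^{j-1}\binom{j-1}{r}f_{i-(j-2r)}(\vx)$; for the second family I reindex by $r\mapsto j-1-r$ so that $f_{i-j+2r+1}=f_{i+(j-2r)-1}$, giving the isolated term $f_{i+j-1}(\vx)$ at $r=0$ and the sum $\sum_{r=1}^{j-1}\binom{j-1}{r}f_{i+(j-2r)-1}(\vx)$ from the middle terms. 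Assembling these produces precisely the right-hand side of \eqref{eq:pop_p}.

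I expect the only real bookkeeping obstacle to be making the reindexing $r\mapsto j-1-r$ and the extraction of the $r=0$ term completely transparent, especially in tracking that the $\pm$ sign stays attached to the correct family after reindexing; a clean way to present this is to first record the ``raw'' double-sum expansion (the boxed displays above) and then observe that it equals the stated right-hand side simply by relabeling the summation index in the second half and isolating the extremal terms, with no further identities needed beyond $\binom{j-1}{r}=\binom{j-1}{j-1-r}$. The degenerate case $j=1$ should be noted separately: both sums $\sum_{r=1}^{0}$ are empty and the identities reduce to the tautologies $f_{i-1}(\vx)\pm f_{i+1}(\vx)=f_{i-1}(\vx)\pm f_{i+1}(\vx)$ and $f_{i-1}(\vx)\pm f_i(\vx)=f_{i-1}(\vx)\pm f_i(\vx)$, so there is nothing to check there.
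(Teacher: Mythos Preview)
Your proof is correct and follows essentially the same approach as the paper's: both apply Lemma~\ref{lem:pop} to each of the two terms, then reindex the second sum via $r\mapsto j-1-r$ using the symmetry $\binom{j-1}{r}=\binom{j-1}{j-1-r}$ to match the claimed form. The paper is terser (it omits the $j=1$ discussion and simply says \eqref{eq:pop_p} ``can be proved similarly''), but the substance is identical.
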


\begin{proof}
  By Lemma \ref{lem:pop}, the left-hand side of~\eqref{eq:pop_h} is equal to
  \[
    \sum_{r=0}^{j-1}  \binom{j-1}{r}f_{(i-1)-(j-1)+2r}(\vx)
    \pm \sum_{r=0}^{j-1} \binom{j-1}{r}f_{(i+1)-(j-1)+2r}(\vx) 
    =\sum_{r=0}^{j-1}  \binom{j-1}{r}f_{i-j+2r}(\vx)
    \pm \sum_{r=0}^{j-1} \binom{j-1}{j-1-r}f_{i+j-2r}(\vx),
  \]
which is equal to the right-hand side of \eqref{eq:pop_h}.
The second identity~\eqref{eq:pop_p} can be proved similarly.
\end{proof}

With Lemma~\ref{lem:col_op}, we are now in the position to
prove Theorems~\ref{thm:BK2}--\ref{thm:row_Goulden2-2pop}.

\begin{proof}[Proof of Theorem~\ref{thm:BK2}] 
  By \eqref{eq:pop_h} we have
  \[
    (\pop)^{j-1} (f_{i-1}(\vx)-f_{i+1}(\vx))
    = f_{i-j}(\vx)-f_{i+j}(\vx)
    +\sum_{r=1}^{j-1} \binom{j-1}{r}
    (f_{i-(j-2r)}(\vx)-f_{i+(j-2r)}(\vx)).
  \]
  Note that, if \( 1\le r \le j-1 \), then \( f_{i-(j-2r)}(\vx)-f_{i+(j-2r)}(\vx) \) is equal to either 
  \( 0 \) or \(\pm(f_{i-j'}(\vx)-f_{i+j'}(\vx)) \) for some $j'$
  with \( 1\le j'\le j-1 \). 
  Hence the matrix \( ( (\pop)^{j-1} (f_{i-1}(\vx)-f_{i+1}(\vx)) )_{1\le i,j \le w} \) can be obtained from the matrix 
  \( ( f_{i-j}(\vx)-f_{i+j}(\vx) )_{1\le i,j \le w} \) by adding a multiple of column \( j' \) to column \( j \) for \( 1\le j'\le j-1 \). 
By~\eqref{eq:BK_odd1}, this implies that
  \begin{equation}
    \label{eq:oddref1}
   \sum_{\la:\lambda_1\le 2w+1} s_{\lambda}(\vx)
    =e(\vx)  \det_{1\le i,j \le w} \left( f_{i-j}(\vx)-f_{i+j}(\vx) \right)
    =e(\vx)  \det_{1\le i,j\le w} \left( (\pop)^{j-1} (f_{i-1}(\vx)-f_{i+1}(\vx)) \right).
  \end{equation}
Again, by \eqref{eq:pop_h} we also obtain
\[
  (\pop)^{i-1} (f_{0}(\vx)-f_{2}(\vx))
  = f_{1-i}(\vx)-f_{1+i}(\vx)
  +\sum_{r=1}^{i-1} \binom{i-1}{r}
  (f_{1-(i-2r)}(\vx)-f_{1+(i-2r)}(\vx)).
\]
Since the operator \( (\pop)^{j-1} \) is linear, by the same argument with row
operations we have
\begin{equation}
  \label{eq:oddref2}
  \det_{1\le i,j\le w} \left( (\pop)^{j-1} (f_{1-i}(\vx)-f_{1+i}(\vx)) \right)
  =\det_{1\le i,j\le w} \left( (\pop)^{j-1}(\pop)^{i-1} (f_{0}(\vx)-f_{2}(\vx)) \right).
\end{equation}
Since \( f_{1-i}(\vx) = f_{i-1}(\vx) \), we obtain the first
identity~\eqref{eq:oddpop} from~\eqref{eq:oddref1} and~\eqref{eq:oddref2}.

\medskip
Now we prove the second identity,
Equation~\eqref{eq:evenpop}. By~\eqref{eq:pop_p} we have 
\[
  (\pop)^{j-1}(f_{i-1}(\vx)+f_{i}(\vx))
  =f_{i-j}(\vx)+f_{i+j-1}(\vx)+\sum_{r=1}^{j-1}\binom{j-1}{r} (f_{i-(j-2r)}(\vx)+f_{i+(j-2r)-1}(\vx)).
\]
For $r$ with \( 1\le r \le j-1 \), let \( j' \coloneqq j-2r \) if
\( 1\le r<\flr{j/2} \),  
and \( j' \coloneqq 2r-j+1 \) if \( \flr{j/2}\le r \le j-1 \).
Then \( f_{i-(j-2r)}(\vx)+f_{i+(j-2r)-1}(\vx)=f_{i-j'}(\vx)+f_{i+j'-1}(\vx) \) and \( 1\le j' \le j-1 \). 
By~\eqref{eq:BK_even1}, this implies that
  \begin{equation}
    \label{eq:p1j}
    \sum_{\lambda:\lambda_1\le 2w} s_{\lambda}(\vx)
    =\det_{1\leq i,j \leq w}\left(f_{i-j}(\vx)+f_{i+j-1}(\vx)\right)=
    \det_{1\leq i,j \leq w}\left( (\pop)^{j-1}(f_{i-1}(\vx)+f_{i}(\vx)) \right).
  \end{equation}
Again, by \eqref{eq:pop_p} we obtain
 \[
   (\pop)^{i-1}(f_{-1}(\vx)+f_{0}(\vx))
   =f_{-i}(\vx)+f_{i-1}(\vx)+\sum_{r=1}^{i-1}\binom{i-1}{r} (f_{-(i-2r)}(\vx)+f_{(i-2r)-1}(\vx)).
 \]
 By the same argument one can show that
 \begin{equation}
   \label{eq:p1i}
   \det_{1\leq i,j \leq w}\left((\pop)^{j-1}(f_{-i}(\vx)+f_{i-1}(\vx))\right)=
   \det_{1\leq i,j \leq w}\left( (\pop)^{j-1}(\pop)^{i-1}(f_{-1}(\vx)+f_{0}(\vx)) \right).
 \end{equation}
Since \( f_{-i}(\vx)=f_{i}(\vx) \), Identity~\eqref{eq:evenpop}
follows from~\eqref{eq:p1j} and~\eqref{eq:p1i}.
\end{proof}

\begin{proof}[Proof of Theorem~\ref{thm:Goulden2}]
  The first identity, Equation~\eqref{eq:oddref_k}, can be proved by the
  same argument as 
  in the proof of~\eqref{eq:oddpop}. 

For the second identity, observe that
\eqref{eq:pop_h} gives
\[
  (\pop)^{j-1} (f_{i+k\delta_{i,w}-1}(\vx)-f_{i+k\delta_{i,w}+1}(\vx))
  = f_{i+k\delta_{i,w}-j}(\vx)-f_{i+k\delta_{i,w}+j}(\vx)
  +\sum_{r=1}^{j-1} \binom{j-1}{r}
  (f_{i+k\delta_{i,w}-(j-2r)}(\vx)-f_{i+k\delta_{i,w}+(j-2r)}(\vx)).
\]
Therefore, we can apply column operations in \eqref{eq:Goulden2m,k} to obtain
 \[
 \underset {r(\lambda)=k}
 {\sum_{\lambda:\lambda_1\le 2w}}s_{\lambda}(\vx)
   =\det_{1\le i,j\le w} 
   \left( f_{i+k\delta_{i,w}-j}(\vx)-f_{i+k\delta_{i,w}+j}(\vx) \right)
   =\det_{1\le i,j\le w} 
   \left( (\pop)^{j-1} (f_{i+k\delta_{i,w}-1}(\vx)-f_{i+k\delta_{i,w}+1}(\vx)) \right),
   \]
which concludes the proof.
\end{proof}

\begin{proof}[Proof of Theorem~\ref{thm:row_Goulden2-2pop}]
We apply column operations in Theorem~\ref{thm:row_Goulden2-2} as in the proof of Theorem~\ref{thm:Goulden2}.
We only give a proof of~\eqref{eq:schur-even-kpop}.
By~\eqref{eq:schur-even-k} and Lemmas~\ref{lem:pop} and
\ref{lem:col_op}, we obtain 
\begin{align*}
  \underset {c(\lambda)=k}
  {\sum_{\lambda:\lambda_1\le 2w}}s_{\lambda}(\vx)
  +\underset {c(\lambda)=2w-k}
  {\sum_{\lambda:\lambda_1\le 2w}}s_{\lambda}(\vx)
   &= \left(\frac{1}{2} \right)^{\chi(k<w) }
 \det_{1 \le i, j \le w} \left(
   f_{i+\chi(i>w-k)-j}(\vx) + f_{i+\chi(i>w-k)+j-2}(\vx)
   \right)\\
   &\kern-8pt
   =2^{\chi(k=w)} \det_{1\le i,j \leq w}\left( \begin{cases}
   f_{i+\chi(i>w-k)-1}(\vx), & \mbox{if \( j=1 \)}\\
   f_{(i-1)+\chi(i>w-k)-(j-1)}(\vx)+f_{(i-1)+\chi(i>w-k)+(j-1)}(\vx), & \mbox{if \( 2\le j \le w \)}
   \end{cases}\right)\\
   &\kern-8pt
   =2^{\chi(k=w)}\det_{1\le i,j \leq w}\left( \begin{cases}
  f_{i+\chi(i>w-k)-1}(\vx), & \mbox{if \( j=1 \)}\\
  (\pop)^{j-2} (f_{i+\chi(i>w-k)-2}(\vx)-f_{i+\chi(i>w-k)}(\vx)),& \mbox{if \( 2\le j \le w \)}
   \end{cases}\right)\\
   &\kern-8pt
   =2^{\chi(k=w)}\det_{1\le i,j \leq w}\left((\pop)^{j-1}f_{i+\chi(i>w-k)-1}(\vx) \right),
\end{align*}
completing the proof.
\end{proof}

\section{Formulas for the number of standard Young tableaux of
  bounded width: proofs of Theorems~\ref{thm:KLO}
  and~\ref{cor:ref_KLO}} \label{sec:SYT-bounded} 

Here we prove the formulas for the number of standard Young tableaux
of bounded width in Theorems~\ref{thm:KLO} and~\ref{cor:ref_KLO},
the former being an alternative to Theorem~\ref{thm:Gessel}, the
latter being a refinement.

\medskip
The important tool here is
the ring homomorphism \( \theta : \Lambda \rightarrow \QQ[[x]] \), defined by
\( \theta(p_1(\vx))=x \) and \( \theta(p_n(\vx))=0 \) for \( n>1 \).
Gessel \cite[Theorem~1]{Gessel1990} 
showed that, for \( f(\vx)\in\Lambda \),
  \[
  \theta(f(\vx))=\sum_{n=0}^{\infty}a_n\frac{x^n}{n!},
  \]
  where \( a_n \) is the coefficient of \( x_1x_2\cdots x_n \) in \( f(\vx) \). In particular, 
  we have
  \( \theta(e_n(\vx))=x^n/n!\),
  which implies
\begin{equation}\label{eq:theta(c_i)}
  \theta(f_n(\vx))=I_n(2x)=\sum_{r\geq 0}\binom{|n|+2r}{r}\frac{x^{|n|+2r}}{(|n|+2r)!}.
\end{equation}
Moreover, by definition, if \( \lambda\vdash n \), we have
\begin{equation}
  \label{eq:theta(s)}
  \theta(s_\lambda(\vx)) = f^{\lambda}\frac{x^n}{n!},
\end{equation}
where \( f^{\lambda} \) denotes the number of SYTs of shape \( \lambda \).
If, with this knowledge, one applies~$\theta$ to both sides
of~\eqref{eq:BK_odd1} and~\eqref{eq:BK_even1}, then one
obtains~\eqref{eq:Gessel1} and~\eqref{eq:Gessel2}, respectively.

The property of the operator~$\theta$ that is most relevant in our
context is that \( \theta \) intertwines \( \pop \) and the
differential operator \( d/dx \). More precisely, for \( f(\vx)\in \Lambda
\), we have
\begin{equation}\label{eq:pop=d/dX}
  \theta(\pop f(\vx)) = \frac{d}{dx} \theta (f(\vx)),
\end{equation}
as is not very hard to check.

\begin{proof}[Proof of Theorem~\ref{thm:KLO}]
  By \eqref{eq:theta(c_i)}, \eqref{eq:theta(s)} and~\eqref{eq:pop=d/dX},
  application of \( \theta \) to Theorem~\ref{thm:BK2} yields that
$$
    \sum_{n\ge0} |\SYT_{{n},2w+1}| \frac{x^n}{n!}
    = \exp(x)\det_{1\le i,j \leq w}\left(\left( \frac{d}{dx}
    \right)^{i+j-2} (I_{0}(2x)-I_{2}(2x))\right),
$$
and
$$
    \sum_{n\ge0} |\SYT_{{n},2w}| \frac{x^n}{n!}
    = \det_{1\le i,j \leq w}\left(\left( \frac{d}{dx} \right)^{i+j-2} (I_{0}(2x)+I_{1}(2x))\right).
$$
  From the fact that
  \[
    I_{0}(2x)-I_{2}(2x)= \sum_{n\ge0}  \Cat(n/2) \frac{x^n}{n!} \qquad \text{and} \qquad
    I_{0}(2x)+I_{1}(2x)= \sum_{n\ge0}  \binom{n}{\flr{n/2}} \frac{x^n}{n!},
  \]
extraction of coefficients of $\frac {x^n} {n!}$ on both sides of the
above equations finishes the proof.
\end{proof}

\begin{proof}[Proof of Theorem~\ref{cor:ref_KLO}]
The first identity \eqref{eqn:ref_KLOoddSYT} can be proved similarly
as in the proof of Theorem~\ref{thm:KLO}.
For the second identity \eqref{eqn:ref_KLOevenSYT},
we use the fact that for a positive integer \( \alpha \),
\[
 I_{\alpha-1}(2x) - I_{\alpha+1}(2x) = \sum_{s \ge 0} F(s+\alpha-1, s/2) \frac{x^{s+\alpha-1}}{(s+\alpha-1)!}.
\]
By applying \( \theta \) to~\eqref{eq:evenref_k}, we obtain
\begin{align*}
  \sum_{n\geq 0}\left( \underset {\lambda_1\le 2w,\ r(\lambda)=k}
  {\sum_{\lambda\vdash n}} f^{\lambda} \right) \frac{x^n}{n!}
&= \det_{1\le i,j \leq w}\left(\left( \frac{d}{dx} \right)^{j-1} (I_{i+k\delta_{i,w}-1}(2x)-I_{i+k\delta_{i,w}+1}(2x))\right)\\
&= \det_{1\le i,j \leq w}\left( \sum_{s\ge0} F(i+k\delta_{i,w}+s-1,s/2) \frac{x^{i+k\delta_{i,w}+s-j}}{(i+k\delta_{i,w}+s-j)!} \right).
\end{align*}
The proof is completed by extracting the coefficient of \(
x^{t_i}/t_i! \) in the $i$-th row, $1\le i\le w$, of the determinant
on the right-hand side, where $t_1+t_2+\dots+t_w=n$.
\end{proof}

\section{Combinatorial interpretations using up-down tableaux}
\label{sec:comb-interpr}

In this section, we give combinatorial interpretations for the
right-hand sides of the identities in
Theorems~\ref{thm:Goulden} and \ref{thm:row_Goulden} in terms of
(marked) up-down tableaux.
Our main results are Theorems~\ref{thm:Goulden_MUDeven},
\ref{thm:Goulden_MUDodd}, and~\ref{thm:UD-tab}.

\subsection{Basic definitions and preliminaries}
We begin with basic definitions and auxiliary results.
We follow the notation in \cite[Section~7]{HKKO}.

\begin{defn} \label{def:UD}
  Let \( \mu \) and \( \nu \) be partitions. A
  \emph{\( w \)-up-down tableau} \( T \) of length \( 2n \) from \( \mu \)
  to \( \nu \) is a sequence
  \( (T_0,T_1,\dots,T_{2n}) \) of partitions satisfying
  the following properties:
  \begin{enumerate}
  \item [(i)]  \(
    \mu=T_0\subseteq T_1\supseteq T_2\subseteq
     T_3\supseteq T_4\subseteq\dots\subseteq T_{2n-1}\supseteq
     T_{2n}=\nu
    \);
  \item [(ii)]each pair $( T_{i-1}, T_i)$ differs by a vertical strip
    {\rm(}that is, by a collection of cells which contains at most one
    cell in each row{\rm)}, for $i=1,2,\dots,2n$;
  \item [(iii)]each $ T_{i}$ has at most $w$~rows, for
    $i=0,1,\dots,2n$.
  \end{enumerate}
  The weight of \( T \) is defined by 
  \[
  \omega(T)=\prod_{i=1}^{n} x_i^{-| T_{2i-2}|+2| T_{2i-1}|-| T_{2i}|}.
\]
It should be noted that the exponent
$-| T_{2i-2}|+2| T_{2i-1}|-| T_{2i}|$ is the sum of
of the differences in sizes of
$( T_{2i-2}, T_{2i-1})$ and of $( T_{2i-1}, T_{2i})$,
for $i=1,2,\dots,n$.

We denote the set of
\( w \)-up-down tableaux of length \( 2n \) from \( \mu \) to \( \nu
\) by \( \UD_n(w;\mu\to \nu) \). 
  
\end{defn}

We will identify
\( T = ( T_0,  T_1,\dots, T_{2n}) \in \UD_n(w;\mu\to \nu) \) with the
family of nonintersecting paths \( \mathbf{P}=(P_1,\dots,P_w) \),
where \( P_i \) is the path from 
\( (\mu_{w+1-i}+i,0) \) to
\( (\nu_{w+1-i}+i,2n) \) consisting of the points
\( ((T_j)_{w+1-i}+i,j) \) for \( j=0,1,\dots,2n \). 
For example, the sequence
\begin{multline*}
((1,1,0,0),\
(1,1,0,0),\
(1,0,0,0),\
(2,1,1,1),\
(2,1,1,0),\
(3,2,2,0),\\
(2,2,1,0),\
(2,2,1,1),\
(1,1,1,1),\
(1,1,1,1),\
(0,0,0,0))
\end{multline*}
forms a \( 4 \)-up-down tableau in \( \UD_5(4;(1,1,0,0)\to (0,0,0,0)) \) 
and is identified with the family \( \mathbf{P} \) in Figure~\ref{fig:paths}.

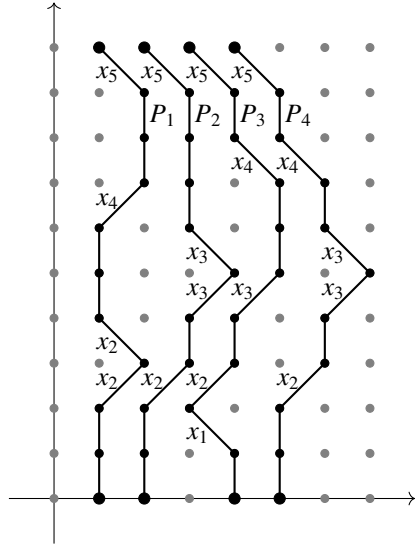
\begin{figure}

\small{
\begin{tikzpicture}[scale=0.6]

\draw[->] (-1,0) -- (8,0);
\draw[->] (0,-1) -- (0,11);

\foreach \i in {0,...,7}
\foreach \j in {0,...,10}
\filldraw[fill=gray, color=gray] (\i,\j) circle (2.5pt);

\foreach \i in {1,2,4,5}
\filldraw (\i,0) circle (3.5pt);

\foreach \i in {1,2,4,5}
\filldraw (\i,1) circle (2.5pt);

\foreach \i in {1,2,3,5}
\filldraw (\i,2) circle (2.5pt);

\foreach \i in {2,3,4,6}
\filldraw (\i,3) circle (2.5pt);

\foreach \i in {1,3,4,6}
\filldraw (\i,4) circle (2.5pt);

\foreach \i in {1,4,5,7}
\filldraw (\i,5) circle (2.5pt);

\foreach \i in {1,3,5,6}
\filldraw (\i,6) circle (2.5pt);

\foreach \i in {2,3,5,6}
\filldraw (\i,7) circle (2.5pt);

\foreach \i in {2,3,4,5}
\filldraw (\i,8) circle (2.5pt);

\foreach \i in {2,3,4,5}
\filldraw (\i,9) circle (2.5pt);

\foreach \i in {1,2,3,4}
\filldraw (\i,10) circle (3.5pt);

\node[right] at (1.9,8.5) {\( P_1\)};
\node[right] at (2.9,8.5) {\( P_2\)};
\node[right] at (3.9,8.5) {\( P_3\)};
\node[right] at (4.9,8.5) {\( P_4\)};

\foreach \i/\j in {3.65/1.4}
\node[left] at (\i,\j) {\(x_1\)};

\foreach \i/\j in {1.65/2.6, 2.65/2.6, 3.65/2.6, 5.65/2.6,1.65/3.4}
\node[left] at (\i,\j) {\(x_2\)};

\foreach \i/\j in {3.65/4.6, 4.65/4.6, 6.65/4.6,3.65/5.4, 6.65/5.4}
\node[left] at (\i,\j) {\(x_3\)};

\foreach \i/\j in {1.65/6.6, 4.65/7.4, 5.65/7.4}
\node[left] at (\i,\j) {\(x_4\)};

\foreach \i/\j in {1.65/9.4, 2.65/9.4, 3.65/9.4, 4.65/9.4}
\node[left] at (\i,\j) {\(x_5\)};

\draw[thick] (1,0) -- (1,2) -- (2,3) -- (1,4) -- (1,6) -- (2,7) -- (2,9) -- (1,10)
(2,0) -- (2,2) -- (3,3) -- (3,4) -- (4,5) -- (3,6) -- (3,9) -- (2,10)
(4,0) -- (4,1) -- (3,2) -- (4,3) -- (4,4) -- (5,5) -- (5,7) -- (4,8) -- (4,9) -- (3,10)
(5,0) -- (5,1) -- (5,2) -- (6,3) -- (6,4) -- (7,5) -- (6,6) -- (6,7) -- (5,8) -- (5,9) -- (4,10);

\end{tikzpicture}

}
\caption{An example of a family \(\mathbf{P}=(P_1,P_2,P_3,P_4)\) of nonintersecting paths.}
\label{fig:paths}
\end{figure}

\begin{defn}
  Let \( L(u\to v) \) denote the set of lattice paths from \( u \) to
\( v \) with steps from the set 
\[
S=\{ (i,j) \rightarrow (i,j+1), ~(i,2j-2) \rightarrow (i+1,2j-1), ~ (i,2j-1) \rightarrow (i-1,2j) : i,j\in \ZZ \}.
\]
We define the weight of a vertical step \( (i,j) \rightarrow (i,j+1) \) to be 1 and the weight of a forward
(respectively backward) diagonal step \( (i,2j-2) \rightarrow (i+1,2j-1) \) 
(respectively \( (i,2j-1) \rightarrow (i-1,2j) \)) to be \( x_j \). 
The weight of a path \( P \) equals the product of the weights of its
steps.
For example, the weight of the left-most path in
Figure~\ref{fig:paths} equals $x_2^2x_4x_5$.

Let \( L(a,b;u\to v) \) denote the set of paths
\( P\in L(u\to v) \) for which every point \( (i,2j) \) of even height
in \( P \) satisfies \( a\le i\le b \). We also write
\( L_{t}(u\to v) = L(t,\infty;u\to v) \).
\end{defn}

\begin{defn}
  An \emph{odd-branch point} of a lattice path \( P \) is a point
  \( (1,2j-1) \) with the property that \( P \) passes through
  \( (1,2j-2)\), \((1,2j-1) \), and \( (1,2j) \).
  An \emph{even-branch point} of a lattice path \( P \) is a point
  \( (1,2j-2) \) with the property that \( P \) passes through
  \( (1,2j-2)\), \((2,2j-1) \), and \( (2,2j) \).
  See Figure~\ref{fig:image1}.

\begin{figure}
\begin{tikzpicture}[scale=0.6]

\draw[color=gray] (0,0) -- (0,2);
\foreach \i in {0,1,2}
\foreach \j in {0,1,2}
\filldraw[fill=gray, color=gray] (\i,\j) circle (2.5pt);

\node[left] at (0,0) {\( 2j-2 \)};
\node[left] at (0,1) {\( 2j-1 \)};
\node[left] at (0,2) {\( 2j \)};
\node[below] at (1,-0.5) {\( x=1 \)};
\node[right] at (1,1) {\( u \)};

\foreach \i in {0,1,2}
\filldraw (1,\i) circle (2.5pt);

\draw[thick] 
(1,0) -- (1,1) -- (1,2) ;

\draw[dashed] (1,-.5) -- (1,2.5);

\end{tikzpicture}
\qquad \qquad \qquad \qquad
\begin{tikzpicture}[scale=0.6]

\draw[dashed] (1,-.5) -- (1,2.5);
\draw[color=gray] (0,0) -- (0,2);
\foreach \i in {0,1,2}
\foreach \j in {0,1,2}
\filldraw[fill=gray, color=gray] (\i,\j) circle (2.5pt);

\node[left] at (0,0) {\( 2j-2 \)};
\node[left] at (0,1) {\( 2j-1 \)};
\node[left] at (0,2) {\( 2j \)};
\node[below] at (1,-0.5) {\( x=1 \)};
\node[right] at (1,0) {\( v \)};

\foreach \i in {1,2}
\filldraw (2,\i) circle (2.5pt);

\filldraw (1,0) circle (2.5pt);

\draw[thick] 
(1,0) -- (2,1) -- (2,2) ;

\end{tikzpicture}

\caption{An odd-branch point \( u \) (left) and an even-branch point \( v \) (right).}
\label{fig:image1}
\end{figure}
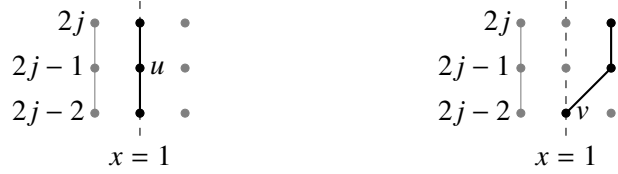

For \( t\ge1 \), an \emph{odd-marked lattice path}
(respectively~\emph{even-marked lattice path}) is a pair \( (P,M) \) of a
lattice path \( P\in L_t(u\to v) \) and a set \( M \) of integers such
that if \( j\in M \), then \( (1,2j-1) \) (respectively~\( (1,2j-2) \)) is an
odd-branch (respectively~even-branch) point of \( P \). Let
\( L_t^o(u\to v) \) (respectively~\( L_t^e(u\to v) \)) denote the set of
odd-marked (respectively~even-marked) lattice paths \( (P,M) \) with
\( P\in L_t(u\to v) \).
\end{defn}

The lemma below collects basic generating functions for various
sets of lattice paths that we shall need in the sequel.

\begin{lem}\label{lem:3}
  For positive integers \( i \), \( j \), and \( n \), we have
  \begin{align}
    \label{eq:UD-lem0}
    f_{i-j}(x_1,\dots,x_n)-f_{i+j}(x_1,\dots,x_n)
    &= \sum_{P\in L_1((i,0)\to (j,2n))} \omega(P),\\
    \label{eq:UD-lem2}
    f_{i-j}(x_1,\dots,x_n)-f_{i+j-1}(x_1,\dots,x_n)
    &= \sum_{(P,M)\in L^o_1((i,0)\to (j,2n))} (-1)^{|M|} \omega(P) \omega(M),\\
    \label{eq:UD-lem1}
    f_{i-j}(x_1,\dots,x_n)+f_{i+j-1}(x_1,\dots,x_n)
    &= \sum_{(P,M)\in L^o_1((i,0)\to (j,2n))} \omega(P) \omega(M),\\
    \label{eq:UD-lem3}
    f_{i-j}(x_1,\dots,x_n)-f_{i+j-2}(x_1,\dots,x_n)
    &= \sum_{P\in L_2((i,0)\to (j,2n))} \omega(P),\\
    \label{eq:UD-lem4}
    f_{i-j}(x_1,\dots,x_n)+f_{i+j-2}(x_1,\dots,x_n)
    &= \sum_{(P,M)\in L^e_1((i,0)\to (j,2n))} 2^{\chi(j=1)}\omega(P),
  \end{align}
  where \( \omega(M) = \prod_{j\in M} x_j  \).
\end{lem}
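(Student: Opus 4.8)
The plan is to recast both sides as weighted lattice walks, prove a single master generating-function identity, and then obtain the five formulas by the reflection principle, the three ``marked'' ones requiring a twist. A path $P\in L\bigl((i,0)\to(j,2n)\bigr)$ is encoded, level by level, by whether it uses the forward diagonal step at heights $2k-2\to 2k-1$ and whether it uses the backward diagonal step at heights $2k-1\to 2k$. Recording these choices, $P$ becomes a \emph{decorated walk} $u_0=i,u_1,\dots,u_n=j$ on $\ZZ$ whose $k$-th step lies in $\{-1,0,+1\}$ and carries weight $x_k$ for a $\pm1$ step, weight $1$ for a ``straight'' $0$-step, and weight $x_k^2$ for a ``double'' $0$-step, the two kinds of $0$-step being remembered as labels; then $\omega(P)$ is the product of these weights. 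Summing $t^{\text{net displacement}}$ over one level gives $(1+x_kt)(1+x_kt^{-1})$, so $\prod_{k=1}^n(1+x_kt)(1+x_kt^{-1})=\sum_r f_r(x_1,\dots,x_n)\,t^r$, and reading off the coefficient of $t^{\,j-i}$ gives $\sum_{P\in L((i,0)\to(j,2n))}\omega(P)=f_{i-j}(x_1,\dots,x_n)$. The key structural point is that the step set and weights of decorated walks are invariant under $x\mapsto -x$ (a $+1$ step and a $-1$ step both have weight $x_k$) and under $x\mapsto x-1$, so the reflection principle applies to decorated walks, although it does \emph{not} apply literally to the paths, since reflecting an actual path produces steps outside the allowed set.

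The hard-wall cases are then immediate. Paths in $L_1$ are decorated walks with $u_0=i\ge1$, $u_n=j\ge1$, staying $\ge1$; reflecting across $x=0$ the initial segment of a walk that reaches $0$ is a weight-preserving bijection from the ``bad'' walks onto all decorated walks from $-i$ to $j$, whence $\sum_{P\in L_1}\omega(P)=f_{i-j}-f_{i+j}$, which is \eqref{eq:UD-lem0}; the translation $x\mapsto x-1$ then gives \eqref{eq:UD-lem3}. For the marked identities I would first carry out the sum over the mark set $M$. An odd-branch point at level $k$ is exactly a straight $0$-step of $u$ at the wall ($u_{k-1}=u_k=1$), so summing over whether $k\in M$ replaces the weight $1$ of that step by $1-x_k$ for \eqref{eq:UD-lem2} and by $1+x_k$ for \eqref{eq:UD-lem1}; an even-branch point is a $+1$ step of $u$ leaving the wall, and since the path weight does not involve $M$, the sum over $M$ doubles the weight of such a step. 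Thus each marked generating function equals that of decorated walks staying $\ge1$ with these modified boundary weights.

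To evaluate these modified walk sums I would run the reflection principle with the mirror line at $x=\tfrac12$ (for \eqref{eq:UD-lem2} and \eqref{eq:UD-lem1}) or $x=1$ (for \eqref{eq:UD-lem4}): when a walk meets the wall, reflecting its continuation turns the wall step into a $\pm1$ (diagonal-type) step, and the extra factor $x_k$ so produced is exactly the mark weight attached to that branch point. Hence the modified boundary weights are precisely what makes this reflection a genuine weight-preserving bijection onto all decorated walks from $i$ to the mirror image of the target, namely $1-j$ (which contributes $f_{i+j-1}$) in the first two cases and $2-j$ (which contributes $f_{i+j-2}$, the factor $2^{\chi(j=1)}$ correcting the degenerate situation where the target lies on the mirror line) in the last. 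The sign $(-1)^{|M|}$ in \eqref{eq:UD-lem2} is what converts the ``$+$'' of \eqref{eq:UD-lem1} into the ``$-$'', i.e.\ turns the reflected contribution from added to subtracted.

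The first two steps are routine; the delicate part is making the reflection in the marked cases precise. Because reflecting a path — even its underlying reduced walk inside the lattice — produces forbidden steps, the whole argument must be organized on decorated walks, where one must track carefully how a mark at a wall step encodes the binary ``reflect here or not'' choice, in particular when several marks (hence several reflections) interact, and how the signs in \eqref{eq:UD-lem2} match up under this correspondence; an alternative, which may be cleaner, is to replace the explicit bijection by a transfer-matrix/recursion computation for walks staying $\ge1$ with the modified boundary weights. Either way, the endpoint cases $i=1$ or $j=1$, which are responsible for the factor $2^{\chi(j=1)}$, require separate bookkeeping.
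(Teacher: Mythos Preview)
Your overall framework is sound and essentially coincides with the argument underlying the paper's proof (which mostly defers to \cite{HKKO}): the reduction to ``decorated walks'' via the master identity $\prod_k(1+x_kt)(1+x_kt^{-1})=\sum_r f_r\,t^r$, the reflection across $x=0$ for \eqref{eq:UD-lem0}, and the index shift for \eqref{eq:UD-lem3} are exactly right and exactly what is used.

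Two points where your write-up diverges from the paper and is imprecise. First, the paper obtains \eqref{eq:UD-lem2} from \eqref{eq:UD-lem1} by the substitution $x_t\mapsto -x_t$: since $f_r(-\vx)=(-1)^rf_r(\vx)$ and every $\omega(P)$ is homogeneous of degree $\equiv j-i\pmod 2$, this immediately flips the sign of $f_{i+j-1}$ and produces the factor $(-1)^{|M|}$. This is much cleaner than running a separate signed reflection, and you should use it. Second, in your sketch of the marked cases you describe the bijection as landing only on walks to the mirror image $1-j$ (resp.\ $2-j$), but it must land on walks to \emph{both} $j$ and the mirror image, according to the parity of $|M|$; otherwise you only get $f_{i+j-1}$, not $f_{i-j}+f_{i+j-1}$. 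The iterated reflection you allude to (fold at the first visit to $0$, mark that step, repeat) is precisely what the paper carries out for \eqref{eq:UD-lem4}; it works, but it is a \emph{sequence} of reflections, not one, with the marks recording the fold points. Once this is made explicit the factor $2^{\chi(j=1)}$ appears automatically: when $j=1$ the target sits on the mirror line $x=1$, so walks with even and odd numbers of folds land at the same endpoint and the two halves of the bijection coalesce.
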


\begin{proof}
  Identity~\eqref{eq:UD-lem0} is obtained from Equation~(7.3)
  in~\cite{HKKO} by taking the limit \( N\to \infty \) and using the
  fact \( f_{-i+j} = f_{i-j} \). Similarly, Equation~\eqref{eq:UD-lem1} is
  obtained from Equation~(7.11) in~\cite{HKKO}.
  Identity~\eqref{eq:UD-lem3} is obtained from \eqref{eq:UD-lem0} with
  \( i \) and \( j \) replaced by \( i-1 \) and \( j-1 \),
  respectively. Identity~\eqref{eq:UD-lem2} follows
  from~\eqref{eq:UD-lem1} by replacing every \( x_t \) by \( -x_t \).

  Identity \eqref{eq:UD-lem4} can be proved similarly as in the
  proof of~\cite[Eq.~(7.3)]{HKKO}. The idea is that we find the smallest
  \( t \) such that the path \( P \) visits \( (1,2t-2),(1,2t-1) \),
  and \( (0,2t) \). Let \( P=P'P'' \), where \( P' \) and \( P'' \) are
  the subpaths of \( P \) obtained by dividing it at \( (1,2t-2) \).
  Update \( P \) to be the path \( P'\mathfrak{R}(P'') \) and add
  \( t \) to \( M \), where \( \mathfrak{R} \) is the map in
  \cite[proof of Lemma~7.3]{HKKO}. Repeat this process until there is
  no such \( t \). Then we obtain
  \( (P,M) \in L^e_1((i,0)\to (j,2n))\).
\end{proof}

\subsection{Combinatorial interpretation for \Cref{thm:Goulden}}

Our combinatorial interpretation of the right-hand side
of~\eqref{eq:Goulden2m,k} is the following. 

\begin{thm} \label{thm:Goulden_MUDeven}
  For integers \( 0\le k\le w \), we have
  \begin{align}
    \label{eq:UD-Gouldeneven1}
    \underset {r(\lambda) = k}
    {\sum_{\lambda:\lambda_1 \le 2w}} s_{\lambda}(\vx_n)
    = \sum_{T\in \UD_n(w; (k,0^{w-1}) \to (0^w))}
   \omega(T).  
   \end{align}
\end{thm}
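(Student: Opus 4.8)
The plan is to obtain~\eqref{eq:UD-Gouldeneven1} from Goulden's determinant~\eqref{eq:Goulden2m,k} by reading off its entries as lattice-path generating functions (via Lemma~\ref{lem:3}), then applying the Lindstr\"om--Gessel--Viennot lemma, and finally translating nonintersecting path families into up-down tableaux through the dictionary recorded after Definition~\ref{def:UD}. First I would specialise~\eqref{eq:Goulden2m,k} to the finite alphabet $\vx_n=(x_1,\dots,x_n)$ --- legitimate, since it is a symmetric function identity --- to get
$$
\underset{r(\lambda)=k}{\sum_{\lambda:\lambda_1\le 2w}} s_{\lambda}(\vx_n)
=\det_{1\le i,j\le w}\left(
\begin{cases}
 f_{i-j}(\vx_n)-f_{i+j}(\vx_n), & 1\le i\le w-1,\\
 f_{i-j+k}(\vx_n)-f_{i+j+k}(\vx_n), & i=w.
\end{cases}\right)
$$
For $1\le i\le w-1$, identity~\eqref{eq:UD-lem0} rewrites the $(i,j)$-entry as $\sum_{P\in L_1((i,0)\to(j,2n))}\omega(P)$; for $i=w$, writing $f_{w-j+k}-f_{w+j+k}=f_{(w+k)-j}-f_{(w+k)+j}$ and applying~\eqref{eq:UD-lem0} again gives $\sum_{P\in L_1((w+k,0)\to(j,2n))}\omega(P)$. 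Thus the determinant becomes $\det_{1\le i,j\le w}\bigl(\sum_{P\in L_1(A_i\to B_j)}\omega(P)\bigr)$ with starting points $A_i=(i,0)$ for $1\le i\le w-1$, $A_w=(w+k,0)$, and ending points $B_j=(j,2n)$; all entries are polynomials and each set $L_1(A_i\to B_j)$ is finite, so there are no convergence issues.

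Next I would apply the Lindstr\"om--Gessel--Viennot lemma, the key point being that the identity permutation is the only one contributing to its expansion. Since $k\ge 0$, the starting $x$-coordinates satisfy $1<2<\dots<(w-1)<(w+k)$ and the ending ones satisfy $1<2<\dots<w$; moreover, in the step set $S$ a step from an even height changes the $x$-coordinate by $0$ or $+1$, while a step from an odd height changes it by $0$ or $-1$, so at any single step two paths can never move in opposite directions. Writing $x_Q(h)$ for the $x$-coordinate of a path $Q$ at height $h$, it follows that if $x_P(0)<x_{P'}(0)$ but $x_P(2n)>x_{P'}(2n)$, then at the first height $h$ with $x_P(h)\ge x_{P'}(h)$ one in fact has $x_P(h)=x_{P'}(h)$, so $P$ and $P'$ share a vertex. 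Hence, for any non-identity permutation, the corresponding path family --- forced by the sorted boundary data to have two paths swap relative order --- is not vertex-disjoint, and
$$
\det_{1\le i,j\le w}\Bigl(\sum_{P\in L_1(A_i\to B_j)}\omega(P)\Bigr)
=\sum_{(P_1,\dots,P_w)}\prod_{\ell=1}^{w}\omega(P_\ell),
$$
the sum running over pairwise vertex-disjoint families with $P_\ell\in L_1(A_\ell\to B_\ell)$.

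Finally I would invoke the dictionary of~\cite[Section~7]{HKKO} recalled after Definition~\ref{def:UD}. A vertex-disjoint family $(P_1,\dots,P_w)$ with $P_\ell$ going from $(\ell,0)$ --- from $(w+k,0)$ when $\ell=w$ --- to $(\ell,2n)$ corresponds bijectively to a sequence $T=(T_0,\dots,T_{2n})$ via $(T_j)_{w+1-\ell}=x_\ell(j)-\ell$: because $P_1\in L_1$ and the paths stay sorted (no crossing without a common vertex, as above), one has $1\le x_1(j)<x_2(j)<\dots<x_w(j)$, whence $x_\ell(j)\ge\ell$, so each $T_j$ is a genuine partition with at most $w$ rows; the step restrictions in $S$ force $T_{2i-2}\subseteq T_{2i-1}\supseteq T_{2i}$ with vertical-strip differences, i.e.\ Definition~\ref{def:UD}(i)--(iii); and $T_0=(k,0^{w-1})$, $T_{2n}=(0^w)$. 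Thus the family lies in $\UD_n(w;(k,0^{w-1})\to(0^w))$, and conversely. For the weights, a step of $P_\ell$ from height $2i-2$ to $2i-1$ (respectively $2i-1$ to $2i$) contributes $x_i$ exactly when it is diagonal; summing $x$-coordinate increments over $\ell$ shows the exponent of $x_i$ in $\prod_\ell\omega(P_\ell)$ is $(|T_{2i-1}|-|T_{2i-2}|)+(|T_{2i-1}|-|T_{2i}|)=-|T_{2i-2}|+2|T_{2i-1}|-|T_{2i}|$, which is the exponent of $x_i$ in $\omega(T)$; therefore $\prod_\ell\omega(P_\ell)=\omega(T)$. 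Chaining the three displayed equalities yields~\eqref{eq:UD-Gouldeneven1}. I expect the one genuinely delicate point to be the ``no crossing without a common vertex'' property of $S$ that collapses the Lindstr\"om--Gessel--Viennot expansion to the identity permutation; the degenerate case $w=0$ (forcing $k=0$, both sides equal to~$1$) is immediate.
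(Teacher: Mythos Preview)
Your argument is correct and follows essentially the same route as the paper: rewrite Goulden's determinant~\eqref{eq:Goulden2m,k} entrywise via~\eqref{eq:UD-lem0}, apply the Lindstr\"om--Gessel--Viennot lemma, and translate the resulting nonintersecting families into $w$-up-down tableaux via the dictionary after Definition~\ref{def:UD}. Your explicit justification of the ``no crossing without a common vertex'' property is a welcome addition; note also that your identification lands directly in $\UD_n(w;(k,0^{w-1})\to(0^w))$, whereas the paper's write-up reaches the reversed direction and then appeals to the symmetry in $x_1,\dots,x_n$.
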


\begin{proof} 
This is in fact the special case of a more general result; cf.\
\cite[Eq.~(1.1)]{KratAV}. For the convenience of the reader,
we provide the (specialized) argument. Namely,
by \eqref{eq:Goulden2m,k} and \eqref{eq:UD-lem0}, we have
  \begin{equation}\label{eq:evenr}
\underset{r(\lambda)=k}
    {\sum_{\lambda:\lambda_1 \le 2w}} s_{\lambda}(\vx_n)
    =\det_{1\le i,j\le w}
      \left( f_{i+k\delta_{i,w}-j}(\vx_n)-f_{i+k\delta_{i,w}+j}(\vx_n) \right)
 = \sum_{\mathbf{P}\in X} \sgn(\mathbf{P}) \prod_{i=1}^{w} \omega(P_i),
  \end{equation}
  where \( X \) is the set of families
  \( \mathbf{P}=(P_1,\dots,P_w) \) of lattice paths
  \( P_i \in L_1((i+k \delta_{i,w},0)\to (\sigma(i),2n)) \) for some
  permutation \( \sigma \) on \( [w] \), and
  \( \sgn(\mathbf{P}) = \sgn(\sigma) \). By applying the
  Lindstr\"om--Gessel--Viennot lemma \cite[Lemma~1]{LindAA}
  to~\eqref{eq:evenr}, we have
  \begin{equation}\label{eq:evenr2}
    \det_{1\le i,j\le w}
      \left( f_{i+k\delta_{i,w}-j}(\vx_n)-f_{i+k\delta_{i,w}+j}(\vx_n) \right)
 =  \sum_{\mathbf{P}\in Y}  \prod_{i=1}^{w}  \omega(P_i),
  \end{equation}
  where \( Y \) is the set of families
  \( \mathbf{P}=(P_1,\dots,P_w) \) of nonintersecting lattice paths
  \( P_i \in L_1((i+k\delta_{i,w},0)\to (i,2n)) \). Therefore,
  by~\eqref{eq:Goulden2m,k}, \eqref{eq:evenr2}, and by identifying
  \( \mathbf{P} \) as a \( w \)-up-down tableau \( T \), we
  obtain
  \begin{align}
    \label{eq:UD-Gouldeneven1-1}
    \underset {r(\lambda) = k}
    {\sum_{\lambda:\lambda_1 \le 2w}} s_{\lambda}(\vx_n)
    = \sum_{T\in \UD_n(w; (0^w) \to (k,0^{w-1}))}
   \omega(T).  
   \end{align}
   By the symmetry of the variables \( x_1,\dots,x_n \),
   \eqref{eq:UD-Gouldeneven1-1} is equivalent to
   \eqref{eq:UD-Gouldeneven1}.
\end{proof}

For our upcoming combinatorial interpretation of the right-hand side
of~\eqref{eq:Goulden2m+1,k} we need one more definition.

\begin{defn} \label{def:UD2}
  Let \( \MUD_n(w;\mu\to \nu) \) denote the set of pairs \( (T,S) \) such
  that \( T\in \UD_n(w;\mu\to \nu) \) and \( S \subseteq [n] \).
  We call an element \( (T,S)\in \MUD_n(w;\mu\to \nu) \)
  a \emph{marked \( w \)-up-down tableau}. 
\end{defn}

Here is now our combinatorial interpretation of the right-hand side
of~\eqref{eq:Goulden2m+1,k}.

\begin{thm}\label{thm:Goulden_MUDodd}
  For integers \( 0\le k\le w \), we have
  \begin{align}
    \label{eq:UD-Gouldenodd1}
    \underset {r(\lambda) = k}
    {\sum_{\lambda:\lambda_1 \le 2w+1}} s_{\lambda}(\vx_n)
    = \underset {|S|=k}
    {\sum_{{(T,S)\in \MUD_n(w; (0^w) \to (0^w))}}}
   \omega(T) \omega(S),  
   \end{align}
   where \( \omega(S)=\prod_{j\in S} x_j \).
\end{thm}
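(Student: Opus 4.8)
The plan is to mimic the proof of Theorem~\ref{thm:Goulden_MUDeven}, now using the odd-bound formula~\eqref{eq:Goulden2m+1,k} together with the marked-lattice-path identity~\eqref{eq:UD-lem2} from Lemma~\ref{lem:3}. Recall that~\eqref{eq:Goulden2m+1,k} reads $\sum_{\lambda:\lambda_1\le 2w+1,\,r(\lambda)=k}s_\lambda(\vx)=e_k(\vx)\det_{1\le i,j\le w}(f_{i-j}(\vx)-f_{i+j}(\vx))$. First I would specialize to $\vx_n=(x_1,\dots,x_n)$ and observe that the elementary symmetric function $e_k(\vx_n)=\sum_{|S|=k,\,S\subseteq[n]}\omega(S)=\sum_{|S|=k}\prod_{j\in S}x_j$ is exactly the generating function for the markings $S$ in the definition of $\MUD_n(w;\mu\to\nu)$. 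So the factor $e_k$ is precisely responsible for the sum over subsets $S$ with $|S|=k$, and the $\omega(S)$ weight.

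Next I would handle the determinant. By~\eqref{eq:UD-lem0}, each entry $f_{i-j}(\vx_n)-f_{i+j}(\vx_n)$ is the generating function $\sum_{P\in L_1((i,0)\to(j,2n))}\omega(P)$. Applying the Lindstr\"om--Gessel--Viennot lemma~\cite[Lemma~1]{LindAA} exactly as in the proof of Theorem~\ref{thm:Goulden_MUDeven} (here the starting points are $(i,0)$ for $i=1,\dots,w$ and the end points are $(i,2n)$ for $i=1,\dots,w$, so the only sign-surviving permutation is the identity), I get
\[
\det_{1\le i,j\le w}\bigl(f_{i-j}(\vx_n)-f_{i+j}(\vx_n)\bigr)=\sum_{\mathbf{P}}\prod_{i=1}^w\omega(P_i),
\]
where $\mathbf{P}=(P_1,\dots,P_w)$ ranges over families of nonintersecting paths with $P_i\in L_1((i,0)\to(i,2n))$. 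Identifying such a family with a $w$-up-down tableau in $\UD_n(w;(0^w)\to(0^w))$ via the correspondence described after Definition~\ref{def:UD} (the all-$0^w$ boundary corresponds precisely to starting points $(i,0)$ and ending points $(i,2n)$), this determinant equals $\sum_{T\in\UD_n(w;(0^w)\to(0^w))}\omega(T)$.

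Combining the two pieces: multiplying the subset generating function $e_k(\vx_n)=\sum_{|S|=k}\omega(S)$ by the up-down-tableau generating function $\sum_T\omega(T)$ gives exactly $\sum_{|S|=k,\,(T,S)\in\MUD_n(w;(0^w)\to(0^w))}\omega(T)\omega(S)$, which is the right-hand side of~\eqref{eq:UD-Gouldenodd1}. Finally, since both sides are symmetric functions, an identity valid for $\vx_n$ for all $n$ is valid for $\vx$, so we recover~\eqref{eq:UD-Gouldenodd1}. The only real subtlety to check carefully is that the path-to-up-down-tableau dictionary genuinely matches the boundary data $(0^w)\to(0^w)$ with the LGV start/end points $(i,0)$ and $(i,2n)$, and that the markings $S$ decouple cleanly from the paths (which they do, since $e_k$ is an independent factor in~\eqref{eq:Goulden2m+1,k}); this is entirely analogous to what was already done for the even case, so no new obstacle arises.
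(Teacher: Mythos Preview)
Your argument is correct and follows essentially the same route as the paper: expand the determinant entries via~\eqref{eq:UD-lem0}, apply Lindstr\"om--Gessel--Viennot to get nonintersecting path families $P_i\in L_1((i,0)\to(i,2n))$, multiply by $e_k(\vx_n)=\sum_{|S|=k}\omega(S)$, and translate paths to $w$-up-down tableaux. One small slip: in your opening sentence you cite~\eqref{eq:UD-lem2}, but the identity you actually use (correctly) is~\eqref{eq:UD-lem0}; also, the statement is already for $\vx_n$, so the final extension to infinitely many variables is unnecessary.
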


\begin{proof} 
By \eqref{eq:UD-lem0}, we have
 \begin{equation}\label{eq:oddc}
   \det_{1 \le i, j \le w} \left(
   f_{i-j}(\vx_n) - f_{i+j}(\vx_n) \right) \\
 = \sum_{\mathbf{P}\in X} \sgn(\mathbf{P}) \prod_{i=1}^{w} \omega(P_i),
  \end{equation}
  where \( X \) is the set of families \( \mathbf{P}=(P_1,\dots,P_w) \) of lattice paths
  \( P_i \in L_1((i,0)\to (\sigma(i),2n)) \)
  for some permutation \( \sigma \) on \( [w] \), and
  \( \sgn(\mathbf{P}) = \sgn(\sigma) \).
  By applying the Lindstr\"om--Gessel--Viennot lemma
  \cite[Lemma~1]{LindAA} to~\eqref{eq:oddc}
  and multiplying \( e_k(\vx_n) \) on both sides, we have
  \begin{equation}\label{eq:oddc2}
   e_k(\vx_n) \det_{1 \le i, j \le w} \left(
   f_{i-j}(\vx_n) - f_{i+j}(\vx_n) \right) \\
 =  \sum_{(\mathbf{P},S)\in Y}  \omega(S) \prod_{i=1}^{w}  \omega(P_i),
  \end{equation}
  where \( Y \) is the set of \( (\mathbf{P},S) \) such that
  \( \mathbf{P}=(P_1,\dots,P_w) \) is a family of nonintersecting
  lattice paths \( P_i \in L_1((i,0)\to (i,2n)) \) and
  \( S \subseteq [n] \) with \( |S|=k \). Therefore,
  by~\eqref{eq:Goulden2m+1,k}, \eqref{eq:oddc2}, and by identifying
  \( \mathbf{P} \) as a \( w \)-up-down tableau \( T \), we
  obtain~\eqref{eq:UD-Gouldenodd1}.
\end{proof}

\subsection{Combinatorial interpretation for \Cref{thm:row_Goulden} (odd bound case)}

The goal of the remainder of this section is to provide combinatorial
interpretations of the right-hand sides of the identities in
Theorem~\ref{thm:row_Goulden}, in its equivalent form given in
Theorem~\ref{thm:row_Goulden2-2}. 

\begin{defn} \label{def:UD3}
  Let \( T\in \UD_n(w;\mu\to \nu) \). For an integer \( j\ge 1 \), we say
  that \( 2j-1 \) is a \emph{length-peak} of \( T \) if
  \( \ell(T_{2j-2}) < \ell(T_{2j-1}) > \ell(T_{2j}) \).
  Furthermore, we say
  that the length-peak \( 2j-1 \) is \emph{full} if
  \( \ell(T_{2j-1})=w \), and \emph{non-full} otherwise. We define
  \[
    \widehat{\UD}_n(w;\mu\to \nu)
    = \{ T\in \UD_n(w;\mu\to \nu): \mbox{every length-peak of \( T \), if any, is full} \}.
  \]
\end{defn}

\begin{defn}\label{def:MUD^o} \label{def:UD4}
    Let 
  \( \MUD_n^o(w;\mu\to \nu) \) denote the set of pairs \( (T,S) \) of
  \( T\in \widehat{\UD}_n(w;\mu\to \nu) \) and \( S \subseteq [n] \) satisfying the following conditions:
  \begin{itemize} 
  \item if \( 2j-1 \) is a length-peak of \( T \), then \( j\in S \);
  \item if \( \ell(T_{2j-1}) <w \), then \( j\not\in S \). 
  \end{itemize}
\end{defn}

The following lemma concerns the right-hand sides
of~\eqref{eq:schur-odd-k} and~\eqref{eq:schur-odd-2h+1-k}, that is,
the identities with an odd bound on the number of columns of the
partitions over which the sum is taken on the left-hand side.

\begin{lem}\label{lem:1}
  We have
 \begin{align}
   \label{eq:9}
   e(\vx_n) \det_{1 \le i, j \le w} \left(
   f_{i+\chi(i>w-k)-j}(\vx_n) - f_{i+\chi(i>w-k)+j-1}(\vx_n) \right)
   &= \sum_{(T,S)\in \MUD_n^o(w; (1^k,0^{w-k}) \to (0^w))} \omega(T) \omega(S) ,\\
   \label{eq:14}
   \overline{e}(\vx_n) \det_{1 \le i, j \le w} \left(
   f_{i+\chi(i>w-k)-j}(\vx_n) + f_{i+\chi(i>w-k)+j-1}(\vx_n) \right)
   &= \sum_{(T,S)\in \MUD_n^o(w; (1^k,0^{w-k}) \to (0^w))} (-1)^{|S|} \omega(T) \omega(S),
 \end{align}
 where \( \omega(S)=\prod_{j\in S} x_j \).
\end{lem}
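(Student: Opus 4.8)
The plan is to evaluate both sides of \eqref{eq:9} and \eqref{eq:14} as generating functions for lattice paths. Since \eqref{eq:UD-lem2} comes from \eqref{eq:UD-lem1} by the substitution $x_t\mapsto-x_t$, and since an up-down tableau from $(1^k,0^{w-k})$ to $(0^w)$ has a number of diagonal steps congruent to $k$ modulo $2$ (each diagonal step changes $|T_i|$ by $\pm1$, so this parity equals that of $|T_{2n}|-|T_0|=-k$), identity \eqref{eq:14} will follow from \eqref{eq:9} by replacing each $x_t$ by $-x_t$; so I concentrate on \eqref{eq:9}. First I would insert \eqref{eq:UD-lem2} into the $(i,j)$-entry of the matrix in \eqref{eq:9}, obtaining $\sum_{(P,M)\in L^o_1((i+\chi(i>w-k),0)\to(j,2n))}(-1)^{|M|}\omega(P)\omega(M)$, expand the $w\times w$ determinant over permutations, and apply the Lindström--Gessel--Viennot lemma \cite[Lemma~1]{LindAA}. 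This rewrites the determinant as $\sum_{(\mathbf P,M)}(-1)^{|M|}\omega(\mathbf P)\omega(M)$, the sum being over families $\mathbf P=(P_1,\dots,P_w)$ of nonintersecting lattice paths with $P_i$ from $(i+\chi(i>w-k),0)$ to $(i,2n)$, together with an odd-marking $M$. Two observations make this legitimate: the tail-exchange involution behind the Lindström--Gessel--Viennot lemma respects markings, since an odd-branch point is a local pattern of three vertices in the column $x=1$ and survives an exchange of tails; and in a nonintersecting family $P_i$ stays weakly to the right of the line $x=i$, so only $P_1$ can enter the column $x=1$ and $M$ is automatically a set of odd-branch points of $P_1$.

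Identifying $\mathbf P$ with a $w$-up-down tableau $T$ through the correspondence recalled after Definition~\ref{def:UD}, the source and sink positions force $T\in\UD_n(w;(1^k,0^{w-k})\to(0^w))$, with $\omega(\mathbf P)=\omega(T)$; and multiplying by $e(\vx_n)=\prod_{t=1}^n(1+x_t)$ adjoins a free subset $B\subseteq[n]$ of weight $\prod_{t\in B}x_t$, exactly as the factor $e_k(\vx_n)$ was treated in the proof of Theorem~\ref{thm:Goulden_MUDodd}. The left-hand side of \eqref{eq:9} thus equals $\sum_{T,M,B}(-1)^{|M|}\omega(T)\omega(M)\omega(B)$, where $T$ ranges over $\UD_n(w;(1^k,0^{w-k})\to(0^w))$, $M$ over subsets of the odd-branch points of the leftmost path of $T$, and $B$ over subsets of $[n]$.

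It remains to collapse this signed sum onto $\MUD^o_n(w;(1^k,0^{w-k})\to(0^w))$, and this is the step I expect to be the main obstacle. The plan is to produce a weight-preserving, sign-reversing involution on the non-surviving configurations. At an odd-branch point $j$ of the leftmost path, the choices ``marked, $j\notin B$'' and ``unmarked, $j\in B$'' cancel in pairs, leaving there only the contributions $1$ (``unmarked, $j\notin B$'') and $-x_j^2$ (``marked, $j\in B$''). The remaining move acts at the leftmost block $j$ that is a length-peak of $T$ with $j\notin B$, or an odd-branch point carrying the ``marked, $j\in B$'' choice: there one inserts or removes a rightward detour of the appropriate left-hand path at level $2j-1$ — thereby changing $\ell(T_{2j-1})$ by $\pm1$, which is legitimate since $T_{2j-2}\subseteq T_{2j-1}\supseteq T_{2j}$ — and simultaneously toggles the marking and the $B$-membership at $j$. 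Such a detour consists of two diagonal steps of weight $x_j$, and the marking carries the sign $-1$, so the move changes the total weight by the factor $x_j^{-2}\cdot(-x_j)\cdot x_j=-1$; hence the involution is sign-reversing. The work is to verify that this move is well defined, that it is an involution, and that its fixed points are exactly the pairs $(T,S)$ with $T\in\widehat{\UD}_n(w;(1^k,0^{w-k})\to(0^w))$ — so that no non-full length-peak survives — and with $S=B$ a subset of $[n]$ containing every length-peak position of $T$ and avoiding every $j$ such that $\ell(T_{2j-1})<w$, that is, exactly the elements of $\MUD^o_n(w;(1^k,0^{w-k})\to(0^w))$, the surviving weights combining to $\omega(T)\omega(S)$. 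This is carried out in the spirit of the re-routing map $\mathfrak R$ of \cite[proof of Lemma~7.3]{HKKO}, used already in the proof of \eqref{eq:UD-lem4}. This establishes \eqref{eq:9}, and \eqref{eq:14} follows as explained.
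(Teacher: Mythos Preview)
Your proposal is correct and follows essentially the same route as the paper: interpret the entries via \eqref{eq:UD-lem2}, apply Lindstr\"om--Gessel--Viennot, multiply by $e(\vx_n)$ to introduce the free subset, then perform the two successive sign-reversing involutions (first matching the marking $M$ against $B$ at odd-branch points, then matching length-peaks with $j\notin B$ against odd-branch points with $j\in B$ by adding/deleting the single cell in the last row of $T_{2j-1}$), with fixed point set $\MUD_n^o$. Your derivation of \eqref{eq:14} from \eqref{eq:9} by the substitution $x_t\mapsto -x_t$ is a clean shortcut---the paper instead repeats the argument directly---and your parity computation for the number of diagonal steps is correct.
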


\begin{proof}
  By \eqref{eq:UD-lem2}, we have
  \begin{equation}\label{eq:15}
   \det_{1 \le i, j \le w} \left(
   f_{i+\chi(i>w-k)-j}(\vx_n) - f_{i+\chi(i>w-k)+j-1}(\vx_n) \right) \\
 = \sum_{\mathbf{Q}\in X} \sgn(\mathbf{Q}) \prod_{i=1}^{w} (-1)^{|M_i|} \omega(P_i) \omega(M_i),
  \end{equation}
  where \( X \) is the set of tuples \( \mathbf{Q}=(Q_1,\dots,Q_w) \) of
  odd-marked lattice paths
  \[
    Q_i=(P_i,M_i) \in L_1^o((i+\chi(i>w-k),0)\to (\sigma(i),2n)),
  \]
  for some permutation \( \sigma \) on \( [w] \), and
  \( \sgn(\mathbf{Q}) = \sgn(\sigma) \).
  By applying the Lindstr\"om--Gessel--Viennot lemma
  \cite[Lemma~1]{LindAA} to~\eqref{eq:15}
  and multiplying \( e(\vx_n) \) on both sides, we have
  \begin{equation}\label{eq:18}
   e(\vx_n) \det_{1 \le i, j \le w} \left(
   f_{i+\chi(i>w-k)-j}(\vx_n) - f_{i+\chi(i>w-k)+j-1}(\vx_n) \right) \\
 =  \sum_{(\mathbf{Q},S)\in Y}  \omega(S) \prod_{i=1}^{w} (-1)^{|M_i|} \omega(P_i) \omega(M_i),
  \end{equation}
  where \( Y \) is the set of \( (\mathbf{Q},S) \)
  such that \( \mathbf{Q}=(Q_1,\dots,Q_w) \) is a family of 
  nonintersecting odd-marked lattice paths
  \[
    Q_i=(P_i,M_i) \in L_1^o((i+\chi(i>w-k),0)\to (i,2n))
  \]
  and \( S \subseteq [n] \). Observe that, if \( (\mathbf{Q},S)\in Y \) with
  \( Q_i=(P_i,M_i) \), then \( M_i = \emptyset \) for all
  \( i\ne 1 \).

  Suppose that \( (\mathbf{Q},S)\in Y \). Let \( j \) be the smallest integer
  such that \( P_1 \) contains \( (1,2j-1) \) and
  \( j\in (M_1 \cup S)\setminus (M_1 \cap S) \). If such \( j \)
  exists, let \( (\mathbf{Q}',S') \) be the pair obtained from \( (\mathbf{Q},S) \) by
  moving the integer \( j \) from \( M_1 \) to \( S \) or vice versa,
  otherwise let \( (\mathbf{Q}',S') = (\mathbf{Q},S) \). Then the map
  \( (\mathbf{Q},S)\mapsto (\mathbf{Q}',S') \) is a sign-reversing involution on \( Y \)
  whose fixed points are the pairs \( (\mathbf{Q},S)\in Y \) with \( Q_i=(P_i,M_i) \)
  such that, if \( P_1 \) contains \( (1,2j-1) \), then
  \( j\in M_1\cap S \) or \( j\not\in M_1\cup S \), and if \( P_1 \)
  does not contain \( (1,2j-1) \), then \( j\not\in M_1 \). Observe
  that, if \( (\mathbf{Q},S) \) is a fixed point, then \( M_1 \) is completely
  determined by \( P_1 \) and \( S \). 
  Therefore, by identifying
  \( (P_1,\dots,P_w) \) as a \( w \)-up-down tableau \( T \), we may
  rewrite~\eqref{eq:18} as
  \begin{equation}\label{eq:19}
   e(\vx_n) \det_{1 \le i, j \le w} \left(
   f_{i+\chi(i>w-k)-j}(\vx_n) - f_{i+\chi(i>w-k)+j-1}(\vx_n) \right) \\
 = \sum_{(T,S)\in \MUD_n(w; (1^k,0^{w-k}) \to (0^w))}
\omega(T) \prod_{j\in S} x_j(-x_j )^{\chi(\ell(T_{2j-1})<w)}.
  \end{equation}

  We will further simplify the right-hand side of~ \eqref{eq:19} by
  finding a sign-reversing involution. Let
  \( (T,S)\in \MUD_n(w; (1^k,0^{w-k}) \to (0^w)) \). Find the smallest
  \( j \) satisfying one of the following conditions:
\begin{description}
\item[\sc Case 1] \( j\in S \) and \( \ell(T_{2j-1})<w \). In this case,
  let \( T' \) be the up-down tableau obtained from \( T \) by
  creating a new cell at the end of row one of \( T_{2j-1} \), and let
  \( S'=S \setminus \{j\} \). Then \( 2j-1 \) is a length-peak of the
  resulting up-down tableau \( T' \) and \( j\notin S' \).
\item[\sc Case 2] \( j\notin S \) and \( 2j-1 \) is a length-peak of
  \( T \). In this case, let \( T' \) be the up-down tableau obtained
  from \( T \) by deleting the last row, which has one cell, from
  \( T_{2j-1} \), and let \( S'=S \cup \{j\} \). Then
  \( \ell(T'_{2j-1})<w \) and \( j\in S' \).
\end{description}
See \Cref{fig:1}. If there is no such \( j \), define \( T'=T \) and
\( S'=S \). Then the map \( (T,S) \mapsto (T',S') \) is a
sign-reversing involution on \( \MUD_n(w; (1^k,0^{w-k}) \to (0^w)) \)
with fixed point set \( \MUD_n^o(w; (1^k,0^{w-k}) \to (0^w)) \). This
shows the first identity, Equation~\eqref{eq:9}.

\medskip
The second identity, Equation~\eqref{eq:14}, can be proved similarly, where the
following equation plays the role of~\eqref{eq:19}:
\begin{multline*}
     \overline{e}(\vx_n) \det_{1 \le i, j \le w} \left(
   f_{i+\chi(i>w-k)-j}(\vx_n) + f_{i+\chi(i>w-k)+j-1}(\vx_n) \right)\\
 = \sum_{(T,S)\in \MUD_n(w; (1^k,0^{w-k}) \to (0^w))}
\omega(T) \prod_{j\in S} (-x_j) (-x_j )^{\chi(\ell(T_{2j-1})<w)}.
\qedhere
\end{multline*}
\end{proof}

\begin{figure}
\begin{tikzpicture}[scale=0.6]

\draw[color=gray] (0,0) -- (0,2);
\foreach \i in {0,1,2}
\foreach \j in {0,1,2}
\filldraw[fill=gray, color=gray] (\i,\j) circle (2.5pt);

\node[left] at (0,0) {\( 2j-2 \)};
\node[left] at (0,1) {\( 2j-1 \)};
\node[left] at (0,2) {\( 2j \)};
\node[below] at (4,-0.5) {\( x=t \)};
\node[right] at (5,1) {\( u \)};
\node[below] at (2,-1.5) {\( j \in S, ~t\geq 1,~ u\not\in \mathbf{P}\) };
\node[right] at (7,1) {\( \Leftrightarrow \)};

\foreach \i in {1,2,3,4}
\foreach \j in {0,1,2}
\filldraw (\i,\j) circle (2.5pt);

\filldraw[fill=gray, color=gray] (5,1) circle (2.5pt);

\foreach \i in {1,2,3,4}
\draw[thick] (\i,0) -- (\i,1) -- (\i,2);

\draw[dashed] (4,-.5) -- (4,2.5);

\end{tikzpicture}
\qquad  
\begin{tikzpicture}[scale=0.6]

\draw[dashed] (4,-.5) -- (4,2.5);
\draw[color=gray] (0,0) -- (0,2);
\foreach \i in {0,1,2}
\foreach \j in {0,1,2}
\filldraw[fill=gray, color=gray] (\i,\j) circle (2.5pt);

\node[left] at (0,0) {\( 2j-2 \)};
\node[left] at (0,1) {\( 2j-1 \)};
\node[left] at (0,2) {\( 2j \)};
\node[below] at (4,-0.5) {\( x=t \)};
\node[below] at (2,-1.5) {\( j \not\in S, ~t\geq 1 \) };

\foreach \i in {1,2,3,4}
\foreach \j in {0,1,2}
\filldraw (\i,\j) circle (2.5pt);

\filldraw[fill=gray, color=gray] (4,1) circle (2.5pt);
\filldraw (5,1) circle (2.5pt);

\foreach \i in {1,2,3}
\draw[thick] (\i,0) -- (\i,1) -- (\i,2);

\draw[thick] (4,0) -- (5,1) -- (4,2);
\end{tikzpicture}

\caption{Configurations for the involution in Lemma~\ref{lem:1}.}
\label{fig:1}
\end{figure}
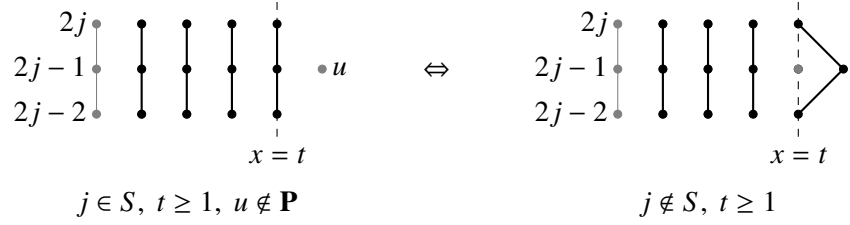

We are now in the position to state, and prove, our combinatorial
interpretations of
the sums of Schur functions over partitions with an odd bound on the
number of columns and a given number of odd columns appearing
in~\eqref{eq:schur-odd-k}
and~\eqref{eq:schur-odd-2h+1-k}.

\begin{thm}\label{thm:1}
  For integers \( 0\le k\le w \), we have
  \begin{align}
    \label{eq:UD-odd1}
    \underset {c(\lambda) = k}
    {\sum_{\lambda:\lambda_1 \le 2w+1}} s_{\lambda}(\vx_n)
    &=\underset {|S|\text{ \em even}}
    {\sum_{{(T,S)\in \MUD_n^o(w; (1^k,0^{w-k}) \to (0^w))}}}
   \omega(T) \omega(S),\\
    \label{eq:UD-odd2}
    \underset {c(\lambda) = 2w+1-k}
    {\sum_{\lambda:\lambda_1 \le 2w+1}} s_{\lambda}(\vx_n)
    &= \underset {|S|\text{ \em odd}}
    {\sum_{{(T,S)\in \MUD_n^o(w; (1^k,0^{w-k}) \to (0^w))}}}
   \omega(T) \omega(S).
  \end{align}
\end{thm}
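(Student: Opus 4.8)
The plan is to deduce Theorem~\ref{thm:1} from Lemma~\ref{lem:1} together with Theorem~\ref{thm:row_Goulden2-2}, exactly parallelling how Theorems~\ref{thm:Goulden_MUDeven} and~\ref{thm:Goulden_MUDodd} were extracted from the Lindstr\"om--Gessel--Viennot machinery. Writing $A_k = \sum_{(T,S)\in\MUD_n^o(w;(1^k,0^{w-k})\to(0^w)),\ |S|\text{ even}} \omega(T)\omega(S)$ and $B_k = \sum_{(T,S)\in\MUD_n^o(w;(1^k,0^{w-k})\to(0^w)),\ |S|\text{ odd}} \omega(T)\omega(S)$, Lemma~\ref{lem:1} says precisely that $A_k + B_k$ equals the determinant on the right of~\eqref{eq:schur-odd-k}, while $A_k - B_k$ equals the determinant on the right of~\eqref{eq:schur-odd-2h+1-k} (the $(-1)^{|S|}$ weighting splits the sum over $\MUD_n^o$ into the even-$|S|$ part minus the odd-$|S|$ part). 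Here I use $e(\vx_n)$ and $\overline e(\vx_n)$ in the sense that the variable set is restricted to $x_1,\dots,x_n$; the identities of Theorem~\ref{thm:row_Goulden2-2} hold for any number of variables, hence in particular for $\vx_n$.

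Comparing with~\eqref{eq:schur-odd-k} and~\eqref{eq:schur-odd-2h+1-k}, I obtain
\[
\underset{c(\lambda)=k}{\sum_{\lambda:\lambda_1\le 2w+1}} s_\lambda(\vx_n)
 + \underset{c(\lambda)=2w+1-k}{\sum_{\lambda:\lambda_1\le 2w+1}} s_\lambda(\vx_n)
 = A_k + B_k,
\qquad
\underset{c(\lambda)=k}{\sum_{\lambda:\lambda_1\le 2w+1}} s_\lambda(\vx_n)
 - \underset{c(\lambda)=2w+1-k}{\sum_{\lambda:\lambda_1\le 2w+1}} s_\lambda(\vx_n)
 = A_k - B_k.
\]
Adding and subtracting these two equations and dividing by $2$ gives $\sum_{c(\lambda)=k,\ \lambda_1\le 2w+1} s_\lambda(\vx_n) = A_k$ and $\sum_{c(\lambda)=2w+1-k,\ \lambda_1\le 2w+1} s_\lambda(\vx_n) = B_k$, which are exactly~\eqref{eq:UD-odd1} and~\eqref{eq:UD-odd2} for finitely many variables. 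Since a symmetric-function identity valid for every finite variable set $\vx_n$ is valid for the infinite alphabet $\vx$, this proves the theorem in full.

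The one point that needs a word of care — and is really the only genuine content beyond bookkeeping — is the legitimacy of the division by $2$: a priori, equating a sum and a difference only pins down the two summands when we work over a ring in which $2$ is invertible, but both $A_k\pm B_k$ and the Schur sums lie in $\ZZ[[x_1,x_2,\dots]]$, so I should instead note that $A_k+B_k$ and $A_k-B_k$ have the same parity of coefficients and argue directly: the even-$|S|$ terms and odd-$|S|$ terms of $\sum_{(T,S)\in\MUD_n^o}\omega(T)\omega(S)$ are individually well-defined power series, and~\eqref{eq:9} minus~\eqref{eq:14}, respectively plus, isolates $2B_k$, respectively $2A_k$; combining with $(\text{sum})\pm(\text{difference})$ of the two Schur-sum identities, which likewise isolates twice each individual Schur sum, lets me cancel the common factor $2$ in $\ZZ[[\vx]]$ since that ring is torsion-free. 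I expect this to be the main (mild) obstacle; everything else is a direct transcription of Lemma~\ref{lem:1} and Theorem~\ref{thm:row_Goulden2-2}. I would also remark, as in the proof of Theorem~\ref{thm:Goulden_MUDeven}, that the starting shape $(1^k,0^{w-k})$ versus $(k,0^{w-1})$ discrepancy does not arise here because the column-refined determinants already come with the row shifts $i+\chi(i>w-k)$ built in, so no appeal to symmetry of the $x_i$ is needed to match the boundary conditions.
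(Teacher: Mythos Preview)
Your proof is correct and follows exactly the paper's approach: add and subtract \eqref{eq:schur-odd-k} and \eqref{eq:schur-odd-2h+1-k}, then apply the two identities of Lemma~\ref{lem:1}. The extra care about dividing by~$2$ and passing from $\vx_n$ to $\vx$ is fine but not strictly needed, since the identities already hold as symmetric-function identities over~$\QQ$.
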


\begin{proof}
  These identities are obtained by adding and subtracting the two
  equations~\eqref{eq:schur-odd-k} and~\eqref{eq:schur-odd-2h+1-k}, and applying the two 
  equations in \Cref{lem:1}.
\end{proof}

\subsection{Combinatorial interpretation for \Cref{thm:row_Goulden} (even bound case)}

We finally turn our attention to the ``even bound identities," that
is, to~\eqref{eq:schur-even-k} and~\eqref{eq:schur-even-2h-k}.

Recall that \( \widehat{\UD}_n(w;\mu\to \nu) \) is the set of
\( T\in \UD_n(w;\mu\to \nu) \) without any non-full length-peaks.

\begin{defn} \label{def:UD5}
  For \( T\in \UD_n(w;\mu\to \nu) \), 
  let 
  \[
    E_w(T) = \{ j \in [n]:  \ell(T_{2j-2}) < \ell(T_{2j-1}) =  \ell(T_{2j}) = w \}.
  \]
  We define 
  \begin{align}
    \label{eq:MUD*}
    \MUD_n^*(w;\mu\to \nu)
    &= \{(T,S): T\in \widehat{\UD}_n(w;\mu\to \nu) \mbox{ and } S \subseteq E_w(T)\},\\
    \label{eq:MUD<}
    \MUD_n^<(w;\mu\to \nu)
    &= \{(T,S): T\in \UD_n(w;\mu\to \nu), \ell(T_j)<w \mbox{ for all \( j=0,1,\dots,2n \), and } S \subseteq[n]\}.
  \end{align}
\end{defn}

The following lemma concerns the right-hand side
in~\eqref{eq:schur-even-k} and~\eqref{eq:schur-even-2h-k}. 
Here, and also later,
we will use the convention that \( \det A = 1 \) if \( A \) is an
empty matrix.

\begin{lem}\label{lem:5}
  For \( 0 \leq k \leq w \), we have
  \begin{equation}
    \label{eq:1}
    \frac{1}{2} \det_{1 \le i, j \le w} \left(
   f_{i+\chi(i>w-k)-j}(\vx_n) + f_{i+\chi(i>w-k)+j-2}(\vx_n) \right) 
   = \sum_{(T,S)\in  \MUD_n^*(w; (1^k,0^{w-k}) \to (0^w))} \omega(T).
  \end{equation}
  For \( 0 \leq k \leq w-1 \), we have
  \begin{equation}\label{eq:2} 
  e(\vx_n) \overline{e}(\vx_n) 
  \det_{2 \le i, j \le w} \left(
    f_{i+\chi(i>w-k)-j}(\vx_n) - f_{i+\chi(i>w-k)+j-2}(\vx_n)
  \right)
   =  \sum_{(T,S)\in \MUD^<_n(w; (1^k,0^{w-k}) \to (0^w))} \omega(T) \prod_{j\in S}(-x_j^2).
  \end{equation}
\end{lem}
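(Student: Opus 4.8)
The plan is to prove both identities by the three--step strategy already used for Lemma~\ref{lem:1}: rewrite each determinant entry as a lattice--path generating function via Lemma~\ref{lem:3}, apply the Lindstr\"om--Gessel--Viennot lemma to pass from the determinant to a sum over families of nonintersecting lattice paths, and reinterpret such a family as a (marked) up--down tableau.

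For the second identity~\eqref{eq:2}, I would first apply~\eqref{eq:UD-lem3} to write the $(i,j)$-entry $f_{i+\chi(i>w-k)-j}(\vx_n)-f_{i+\chi(i>w-k)+j-2}(\vx_n)$ of the $(w-1)\times(w-1)$ determinant as $\sum_{P\in L_2((i+\chi(i>w-k),0)\to (j,2n))}\omega(P)$, apply the Lindstr\"om--Gessel--Viennot lemma~\cite{LindAA} to reduce the determinant to a sum over families of nonintersecting paths in $L_2$ with $\sigma=\mathrm{id}$, and then shift every path one unit to the left so that it lies in $L_1$. This identifies such a family with a $(w-1)$-up-down tableau from $(1^k,0^{w-1-k})$ to $(0^{w-1})$, equivalently with a $w$-up-down tableau $T$ from $(1^k,0^{w-k})$ to $(0^w)$ with $\ell(T_j)<w$ for all $j$ (the condition $0\le k\le w-1$ is exactly what makes $(1^k,0^{w-1-k})$ legal here). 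Finally, since $e(\vx_n)\overline{e}(\vx_n)=\prod_{j=1}^n(1-x_j)(1+x_j)=\prod_{j=1}^n(1-x_j^2)=\sum_{S\subseteq[n]}\prod_{j\in S}(-x_j^2)$, multiplying the LGV output by $e(\vx_n)\overline{e}(\vx_n)$ attaches a free subset $S\subseteq[n]$ carrying the weight $\prod_{j\in S}(-x_j^2)$, which yields the right--hand side of~\eqref{eq:2}; the boundary case $w=1$ is the empty determinant $=1$ and forces $k=0$, $T$ trivial. I expect this part to be essentially routine.

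For the first identity~\eqref{eq:1}, I would use~\eqref{eq:UD-lem4} to write the $(i,j)$-entry as $2^{\chi(j=1)}\sum_{(P,M)\in L_1^e((i+\chi(i>w-k),0)\to(j,2n))}\omega(P)$; as the factor $2^{\chi(j=1)}$ depends only on the column index, it pulls out of the determinant as a single factor $2$ from column one, cancelling the prefactor $\tfrac12$. Applying the Lindstr\"om--Gessel--Viennot lemma then expresses $\tfrac12\det$ as a sum over families of nonintersecting even--marked lattice paths $Q_i=(P_i,M_i)\in L_1^e((i+\chi(i>w-k),0)\to(i,2n))$; since only the lowest path $P_1$ can ever reach the column $x=1$, one has $M_i=\emptyset$ for $i\ge 2$, and when $(P_1,\dots,P_w)$ is identified with a $w$-up-down tableau $T$ from $(1^k,0^{w-k})$ to $(0^w)$, the even--branch points of $P_1$ become precisely the positions $j$ with $\ell(T_{2j-2})<\ell(T_{2j-1})=\ell(T_{2j})=w$, i.e.\ the elements of $E_w(T)$, so $M_1\subseteq E_w(T)$. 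What then remains is to cut the sum down to $T\in\widehat{\UD}_n$, i.e.\ to $\MUD_n^*(w;(1^k,0^{w-k})\to(0^w))$.

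The main obstacle is exactly this last reduction, together with a subtlety concealed in the LGV step: for \emph{even}--marked paths the naive tail--swap involution of the Lindstr\"om--Gessel--Viennot argument can create or destroy an even--branch configuration at a swap vertex of odd height adjacent to $x=1$, so that a mark has no legal place to go and certain crossing families fail to cancel; the uncancelled signed contributions are precisely what annihilates the tableaux carrying a non--full length--peak. I would resolve this the way the proof of Lemma~\ref{lem:1} resolves its analogue: pass through a signed intermediate identity (the counterpart of~\eqref{eq:19}), in which the summand over $(T,S)\in\MUD_n(w;(1^k,0^{w-k})\to(0^w))$ carries a sign recording whether $\ell(T_{2j-1})<w$ at each marked position, and then apply a sign--reversing involution that, at the smallest offending index $j$, either appends a cell to the end of the first row of $T_{2j-1}$ (converting a non--full situation into a full length--peak and deleting $j$ from the mark set) or performs the reverse, so that the fixed points are exactly the elements of $\MUD_n^*(w;(1^k,0^{w-k})\to(0^w))$. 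Arranging this involution to be simultaneously compatible with the constraint $M_1\subseteq E_w(T)$ and with the $\widehat{\UD}_n$ condition, and verifying that it genuinely reverses sign and preserves $\omega(T)$, is the delicate technical heart of the argument; everything else parallels the odd--bound case already established in Lemma~\ref{lem:1}.
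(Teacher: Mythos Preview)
Your treatment of~\eqref{eq:2} is correct and coincides with the paper's: apply~\eqref{eq:UD-lem3} and LGV, shift into $L_1$, and expand $e(\vx_n)\overline{e}(\vx_n)=\prod_j(1-x_j^2)$ to generate the free subset~$S$.

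For~\eqref{eq:1} you have correctly assembled all the pieces and, crucially, you have correctly diagnosed the obstacle: the tail-swap involution of LGV can destroy an even-branch configuration at a vertex $(2,2j-1)$ adjacent to $x=1$, so certain crossing families survive with sign, and these residual terms are exactly what must cancel the non-full length-peaks. Where your proposal goes astray is in the \emph{shape} of the resolution. You propose to pass through an analogue of~\eqref{eq:19}, i.e.\ a signed sum over $(T,S)\in\MUD_n(w;\mu\to\nu)$, and then run a Lemma~\ref{lem:1}-style involution on these nonintersecting pairs. But no such intermediate identity exists here: there is no prefactor $e(\vx_n)$ to manufacture a free set $S$, and the marks in~\eqref{eq:UD-lem4} carry \emph{no} weight and \emph{no} sign, so after any amount of cancellation among nonintersecting families you cannot produce a sign depending on $\ell(T_{2j-1})$. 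The sign you need lives only in the LGV permutation sign of the \emph{intersecting} families that fail to cancel.

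Concretely, the paper's argument stops the LGV tail-swap short and works on an intermediate set $Y$ that still contains families where $P_1$ and $P_2$ meet at a point $u=(2,2j-1)$ with $j\in M_1\cup M_2$ (and in such families $P_2$ can itself reach $x=1$, so your claim that $M_i=\emptyset$ for $i\ge 2$ fails on $Y$). The sign-reversing involution is then built on $Y$ and exchanges an intersecting configuration (with $j\in M_1$) directly with a nonintersecting configuration carrying a non-full length-peak at $2j-1$; the fixed points are precisely the elements of $\MUD_n^*$. So the involution does not modify a pair $(T,S)$ of tableau-plus-subset as in Lemma~\ref{lem:1}; it toggles between a crossing family and a tableau with a forbidden peak. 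Once you reformulate your last paragraph to operate on this larger set $Y$ rather than on $\MUD_n$, the argument goes through.
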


\begin{proof}
  For the first identity, by \eqref{eq:UD-lem4}, we have
  \begin{equation}\label{eq:lemMUD*}
   \frac{1}{2} \det_{1 \le i, j \le w} \left(
   f_{i+\chi(i>w-k)-j}(\vx_n) + f_{i+\chi(i>w-k)+j-2}(\vx_n) \right) 
 = \sum_{\mathbf{Q}\in X} \sgn(\mathbf{Q}) \prod_{i=1}^{w} \omega(P_i),
  \end{equation}
  where \( X \) is the set of tuples \( \mathbf{Q}=(Q_1,\dots,Q_w) \) of
  even-marked lattice paths
  \[
    Q_i=(P_i,M_i) \in L_1^e((i+\chi(i>w-k),0)\to (\sigma(i),2n)),
  \]
  for some permutation \( \sigma \) on \( [w] \), and
  \( \sgn(\mathbf{Q}) = \sgn(\sigma) \).
  Note that the factor \( 1/2 \) accounts for the fact that, if \( j=1 \), then
  \( f_{i+\chi(i>w-k)-j}(\vx_n) = f_{i+\chi(i>w-k)+j-2}(\vx_n) \).

  We apply the Lindstr\"om--Gessel--Viennot lemma \cite[Lemma~1]{LindAA}
  to~\eqref{eq:lemMUD*} as in the proof of~\eqref{eq:9}. However, in
  the present
  case, we must be careful because certain configurations cannot be
  canceled. To be specific, suppose that \( \mathbf{Q} \) contains two
  even-marked paths \( Q_r=(P_r,M_r) \) and \( Q_s=(P_s, M_s) \) such
  that
  \begin{itemize}
  \item \( P_r \) passes the points \( (1,2j-2), (2,2j-1), (2,2j) \), and \( j\in M_r \),
  \item \( P_s \) passes the points \( (2,2j-2), (2,2j-1), (1,2j) \), which forces that \( j\not\in M_s \).
  \end{itemize}
  Let \( P_r' \) and \( P_r'' \) be the subpaths of \( P_r \) before
  and after the point \( u=(2,2j-1) \), respectively. Define
  \( P_s' \) and \( P_s'' \) similarly. Let
  \( \widetilde{P}_r = P'_rP''_s \) and
  \( \widetilde{P}_s = P'_sP''_r \) as shown in \Cref{fig:image2}.
  Since \( (1,2j-2) \) is no longer an even-branch point for
  \( \widetilde{P}_r \) or \( \widetilde{P}_s \), we cannot cancel
  such \( \mathbf{Q} \) by modifying the paths at \( u \). Still, except for
  such an intersection point, we can apply the usual ``tail exchange''
  involution. Thus, we have
  \begin{equation}\label{eq:lemMUD*LGV}
   \frac{1}{2} \det_{1 \le i, j \le w} \left(
   f_{i+\chi(i>w-k)-j}(\vx_n) + f_{i+\chi(i>w-k)+j-2}(\vx_n) \right) 
 =  \sum_{\mathbf{Q}\in Y} \sgn(\mathbf{Q})  \prod_{i=1}^{w}  \omega(P_i) ,
  \end{equation}
  where \( Y \) is the set of \( \mathbf{Q}\in X \) with \( Q_i=(P_i,M_i) \) satisfying the following conditions:
  \begin{itemize} 
  \item \( P_i \) is a path from \( (i+\chi(i>w-k),0) \) to \( (i,2n) \) for \( i=3,\dots, w \),
  \item if there is an intersection point \( u \), then
  \( u=(2,2j-1) \) for some \( j \in M_1\cup M_2 \), \( u \not \in P_i \) for \( i\geq 3 \), 
  and \( P_1 \) visits \( (1,2j-2), u, (1,2j) \), and \( P_2 \) visits \( (2,2j-2), u, (2,2j) \), or vice versa. 
  \end{itemize}
    Observe that, if \( \mathbf{Q}\in Y \), then \( M_i = \emptyset \) for all \( i\geq 3 \), 
  and \( P_2, P_3, \dots, P_w \) are nonintersecting.

   We will further simplify the right-hand side of~\eqref{eq:lemMUD*LGV} by
  finding a sign-reversing involution. Let \( \mathbf{Q}\in Y \) with
  \( Q_i=(P_i,M_i) \), and let \( \mathbf{P}=(P_1, P_2, \dots, P_w) \). Find the smallest
  \( j \) satisfying one of the following conditions:
\begin{description}
\item[\sc Case 1] \( j\in M_1 \) and \( u=(2,2j-1)\in P_1 \cap P_2 \). In this case, let
  \( v=(t+1, 2j-1) \), where \( t\ge2 \) is the smallest integer such
  that \( (t+1,2j-1) \) is not contained in any path in
  \( \mathbf{P} \). Let \( \mathbf{Q}' \) be the family obtained from
  \( \mathbf{Q} \) by removing the integer \( j \) from \( M_1 \),
  exchanging all points \( (x,y) \) of \( P_1 \) and \( P_2 \) with
  \( y\ge 2j \), and replacing \( u\in P_1 \) and
  \( (t,2j-1) \in P_t \) by \( (1,2j-1) \) and \( v \), respectively.
\item[\sc Case 2] \( j\notin M_1 \), \( P_i \) visits
  \( (i,2j-2), (i, 2j-1), (i,2j) \) for \( i=1,2,\dots, t-1 \), and
  \( P_t \) visits \( (t, 2j-2),\break (t+1, 2j-1), (t, 2j) \) for some
  \( t\geq 2 \). In this case, let \( u=(2,2j-1) \) and
  \( v=(t+1, 2j-1) \). Let \( \mathbf{Q}' \) be the family obtained
  from \( \mathbf{Q} \) by adding the integer \( j \) to \( M_1 \),
  exchanging all points \( (x,y) \) of \( P_1 \) and \( P_2 \) with
  \( y\ge 2j \), and replacing \( (1,2j-1)\in P_1 \) and
  \( v \in P_t \) by \( u \) and \( (t,2j-1) \), respectively.
\end{description}
See Figure~\ref{fig:image3}. If there is no such \( j \), define \( \mathbf{Q}' = \mathbf{Q} \). 
Then the map \(  \mathbf{Q}  \mapsto \mathbf{Q}'  \) is a
sign-reversing involution on \( Y \).
Note that, if \( \mathbf{Q}\in Y \) is a fixed point, then \( M_i=\emptyset \) for all \( i\geq 2 \). 
Hence, a fixed point \( \mathbf{Q}\in Y \) has no intersection point and can be identified with a pair 
\( (T,S) \in  \MUD_n^*(w; (1^k,0^{w-k}) \to (0^w)) \), where \( S=M_1 \). 
This shows the first identity, Equation~\eqref{eq:1}.

The second identity, Equation~\eqref{eq:2}, is easily obtained by combining 
the application of the Lindstr\"om--Gessel--Viennot lemma
\cite[Lemma~1]{LindAA} to~\eqref{eq:UD-lem3}
with the fact that \( e(\vx_n) \overline{e}(\vx_n) = \overline{e}(\vx_n^2) \). 
 \end{proof}

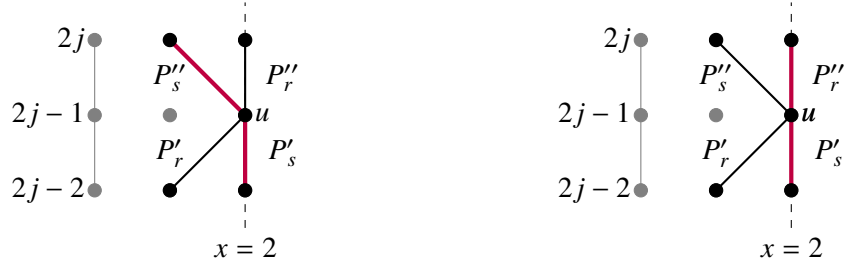
\begin{figure}
\begin{tikzpicture}

\draw[dashed] (2,-.5) -- (2,2.5);
\draw[color=gray] (0,0) -- (0,2);
\foreach \i in {0,1,2}
\foreach \j in {0,1,2}
\filldraw[fill=gray, color=gray] (\i,\j) circle (2.5pt);

\node[left] at (0,0) {\( 2j-2 \)};
\node[left] at (0,1) {\( 2j-1 \)};
\node[left] at (0,2) {\( 2j \)};
\node[below] at (2,-0.5) {\( x=2 \)};
\node[right] at (2,1) {\( u \)};
\node at (1,0.5) {\( P'_r \)};
\node at (2.5,0.5) {\( P'_s \)};
\node at (1,1.5) {\( P''_s \)};
\node at (2.5,1.5) {\( P''_r \)};

\draw[ultra thick, color=purple] 
(2,0) -- (2,1) -- (1,2) ;

\foreach \i in {0,1,2}
\filldraw (2,\i) circle (2.5pt);

\filldraw (1,0) circle (2.5pt);
\filldraw (1,2) circle (2.5pt);

\draw[thick] 
(1,0) -- (2,1) -- (2,2) ;

\end{tikzpicture}
\qquad \qquad \qquad \qquad
\begin{tikzpicture}

\draw[dashed] (2,-.5) -- (2,2.5);
\draw[color=gray] (0,0) -- (0,2);
\foreach \i in {0,1,2}
\foreach \j in {0,1,2}
\filldraw[fill=gray, color=gray] (\i,\j) circle (2.5pt);

\node[left] at (0,0) {\( 2j-2 \)};
\node[left] at (0,1) {\( 2j-1 \)};
\node[left] at (0,2) {\( 2j \)};
\node[below] at (2,-0.5) {\( x=2 \)};
\node[right] at (2,1) {\( u \)};
\node[right] at (2,1) {\( u \)};
\node at (1,0.5) {\( P'_r \)};
\node at (2.5,0.5) {\( P'_s \)};
\node at (1,1.5) {\( P''_s \)};
\node at (2.5,1.5) {\( P''_r \)};

\draw[ultra thick, color=purple] 
(2,0) -- (2,1) -- (2,2) ;

\foreach \i in {0,1,2}
\filldraw (2,\i) circle (2.5pt);

\filldraw (1,0) circle (2.5pt);
\filldraw (1,2) circle (2.5pt);

\draw[thick] 
(1,0) -- (2,1) -- (1,2) ;

\end{tikzpicture}

\caption{Suppose that the black path \( P_r=P'_rP''_r \) has a marking
  \( j\in M_r \). It intersects with the red path \( P_s=P'_sP''_s \)
  at \( u \) but this cannot be canceled because the configuration (on
  the right) obtained by exchanging the two subpaths after \( u \) is
  not a valid configuration due to the marking \( j\in M_r \).}
\label{fig:image2}
\end{figure}

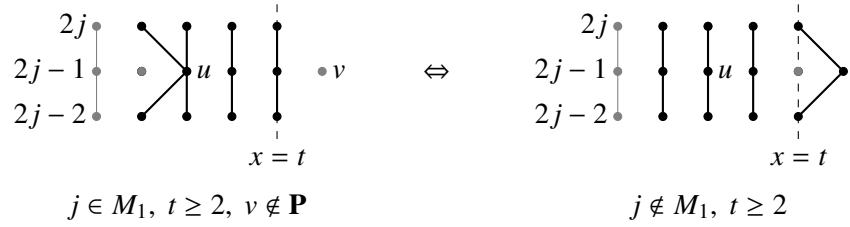
\begin{figure}
\begin{tikzpicture}[scale=0.6]

\draw[color=gray] (0,0) -- (0,2);
\foreach \i in {0,1,2}
\foreach \j in {0,1,2}
\filldraw[fill=gray, color=gray] (\i,\j) circle (2.5pt);

\node[left] at (0,0) {\( 2j-2 \)};
\node[left] at (0,1) {\( 2j-1 \)};
\node[left] at (0,2) {\( 2j \)};
\node[below] at (4,-0.5) {\( x=t \)};
\node[right] at (2,1) {\( u \)};
\node[right] at (5,1) {\( v \)};
\node[below] at (2,-1.5) {\( j \in M_1, ~t\geq 2,~ v\not\in \mathbf{P}\) };
\node[right] at (7,1) {\( \Leftrightarrow \)};

\foreach \i in {1,2,3,4}
\foreach \j in {0,1,2}
\filldraw (\i,\j) circle (2.5pt);

\filldraw[fill=gray, color=gray] (5,1) circle (2.5pt);
\filldraw[fill=gray, color=gray] (1,1) circle (2.5pt);

\foreach \i in {2,3,4}
\draw[thick] (\i,0) -- (\i,1) -- (\i,2);

\draw[thick] (1,0) -- (2,1) -- (1,2);

\draw[dashed] (4,-.5) -- (4,2.5);
\end{tikzpicture}
\qquad  
\begin{tikzpicture}[scale=0.6]

\draw[dashed] (4,-.5) -- (4,2.5);
\draw[color=gray] (0,0) -- (0,2);
\foreach \i in {0,1,2}
\foreach \j in {0,1,2}
\filldraw[fill=gray, color=gray] (\i,\j) circle (2.5pt);

\node[left] at (0,0) {\( 2j-2 \)};
\node[left] at (0,1) {\( 2j-1 \)};
\node[left] at (0,2) {\( 2j \)};
\node[below] at (4,-0.5) {\( x=t \)};
\node[right] at (2,1) {\( u \)};
\node[below] at (2,-1.5) {\( j \not\in M_1, ~t\geq 2 \) };

\foreach \i in {1,2,3,4}
\foreach \j in {0,1,2}
\filldraw (\i,\j) circle (2.5pt);

\filldraw[fill=gray, color=gray] (4,1) circle (2.5pt);
\filldraw (5,1) circle (2.5pt);

\foreach \i in {1,2,3}
\draw[thick] (\i,0) -- (\i,1) -- (\i,2);

\draw[thick] (4,0) -- (5,1) -- (4,2);

\end{tikzpicture}

\caption{An illustration of the involution in Lemma~\ref{lem:5}.}
\label{fig:image3}
\end{figure}

\begin{defn}\label{def:mud_e} 
  Let \( p(T) \) denote the number of length-peaks in \( T \).
  We define
 \begin{align*}
  \MUD^{e,e}_n(w;\mu\to \nu) &= \MUD^1_n(w;\mu\to \nu) \cup \MUD^{0,e}_n(w;\mu\to \nu),\\
  \MUD^{e,o}_n(w;\mu\to \nu) &= \MUD^1_n(w;\mu\to \nu) \cup \MUD^{0,o}_n(w;\mu\to \nu),
  \end{align*}
where 
\begin{align*}
  \MUD^1_n(w;\mu\to \nu)
  &= \{(T,S): T\in \widehat{\UD}_n(w;\mu\to \nu), E_w(T)\neq \emptyset, \mbox{ and } S \subseteq E_w(T) \setminus \{\min E_w(T)\} \}, \\
  \MUD^{0,e}_n(w;\mu\to \nu)  &= \{(T,\emptyset): T\in \widehat{\UD}_n(w;\mu\to \nu), E_w(T)= \emptyset, \mbox{ and \( p(T) \) is even}\}, \\
  \MUD^{0,o}_n(w;\mu\to \nu)  &= \{(T,\emptyset): T\in \widehat{\UD}_n(w;\mu\to \nu), E_w(T)= \emptyset, \mbox{ and \( p(T) \) is odd}\}.
\end{align*}
\end{defn}

Note that, if \( (T,S) \) is an element in \( \MUD^{0,e}_n(w;\mu\to \nu) \)
or \( \MUD^{0,o}_n(w;\mu\to \nu) \), then \( S=\emptyset \).

We are ready to state our combinatorial interpretations of
the sums of Schur functions over partitions with an even bound on the
number of columns and a given number of odd columns appearing
in~\eqref{eq:schur-even-k}
and~\eqref{eq:schur-even-2h-k}. 

\begin{thm}\label{thm:UD-tab}
  For \( 0\le k \leq w-1 \), we have
  \begin{align}
    \label{eq:UD-even}
    \underset {c(\lambda) = k}
    {\sum_{\lambda:\lambda_1 \le 2w}} s_{\lambda}(\vx_n)
     &= \sum_{(T,S)\in \MUD^{e,e}_n(w;(1^k,0^{w-k})\to (0^w))} \omega(T),\\
    \label{eq:UD-even'}
    \underset {c(\lambda) = 2w-k}
    {\sum_{\lambda:\lambda_1 \le 2w}} s_{\lambda}(\vx_n)
      &= \sum_{(T,S)\in \MUD^{e,o}_n(w;(1^k,0^{w-k})\to (0^w))} \omega(T).
  \end{align}
 For the case \( k=w \), we have
 \begin{align}
 \label{eq:UD-even-h}
 \underset {c(\lambda) = w}
 {\sum_{\lambda:\lambda_1 \le 2w}} s_{\lambda}(\vx_n)
     &= \sum_{(T,S)\in \MUD^*_n(w;(1^w)\to (0^w))} \omega(T).
  \end{align}
\end{thm}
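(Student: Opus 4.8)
The plan is to feed the even-bound identities of Theorem~\ref{thm:row_Goulden2-2}, specialized to the $n$ variables $\vx_n$, into the two evaluations of Lemma~\ref{lem:5}, and then to clean up the resulting sums over marked up-down tableaux by means of two sign-reversing involutions. The case $k=w$ will be immediate: there $2w-k=k=w$ and $\chi(i>w-k)=\chi(i>0)=1$ for every $i\ge1$, so~\eqref{eq:schur-even-k} reads
\[
2\underset{c(\lambda)=w}{\sum_{\lambda:\lambda_1\le 2w}}s_{\lambda}(\vx_n)
=\det_{1\le i,j\le w}\bigl(f_{i+1-j}(\vx_n)+f_{i+j-1}(\vx_n)\bigr),
\]
whereas the $k=w$ instance of~\eqref{eq:1} says that the very same determinant equals $2\sum_{(T,S)\in\MUD^*_n(w;(1^w)\to(0^w))}\omega(T)$; dividing by~$2$ gives~\eqref{eq:UD-even-h}.

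For $0\le k\le w-1$ I would abbreviate $\mu=(1^k,0^{w-k})$, $\nu=(0^w)$, and combine~\eqref{eq:schur-even-k} with~\eqref{eq:1} (the factors of $\frac12$ matching because $\chi(k<w)=1$) and~\eqref{eq:schur-even-2h-k} with~\eqref{eq:2}, obtaining
\[
\underset{c(\lambda)=k}{\sum_{\lambda:\lambda_1\le 2w}}s_{\lambda}(\vx_n)
+\underset{c(\lambda)=2w-k}{\sum_{\lambda:\lambda_1\le 2w}}s_{\lambda}(\vx_n)
=\sum_{(T,S)\in\MUD^*_n(w;\mu\to\nu)}\omega(T)
\]
and
\[
\underset{c(\lambda)=k}{\sum_{\lambda:\lambda_1\le 2w}}s_{\lambda}(\vx_n)
-\underset{c(\lambda)=2w-k}{\sum_{\lambda:\lambda_1\le 2w}}s_{\lambda}(\vx_n)
=\sum_{(T,S)\in\MUD^<_n(w;\mu\to\nu)}\omega(T)\prod_{j\in S}(-x_j^2).
\]
On the first right-hand side, the map that is the identity when $E_w(T)=\emptyset$ and otherwise adds or removes $\min E_w(T)$ from $S$ is a weight-preserving involution on $\MUD^*_n(w;\mu\to\nu)$ whose fixed points are the pairs $(T,\emptyset)$ with $T\in\widehat{\UD}_n(w;\mu\to\nu)$ and $E_w(T)=\emptyset$; grouping those by the parity of $p(T)$ and invoking Definition~\ref{def:mud_e}, the first sum equals
\[
2\sum_{(T,S)\in\MUD^1_n(w;\mu\to\nu)}\omega(T)
+\sum_{(T,S)\in\MUD^{0,e}_n(w;\mu\to\nu)}\omega(T)
+\sum_{(T,S)\in\MUD^{0,o}_n(w;\mu\to\nu)}\omega(T).
\]

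The main part of the argument will be to show that the second right-hand side equals $\sum_{(T,S)\in\MUD^{0,e}_n(w;\mu\to\nu)}\omega(T)-\sum_{(T,S)\in\MUD^{0,o}_n(w;\mu\to\nu)}\omega(T)$. For this I would build a sign-reversing involution on $\MUD^<_n(w;\mu\to\nu)$: choose the least index $j$ for which either $j\in S$ and $\ell(T_{2j-1})\neq w-1$, or else $2j-1$ is a length-peak of $T$ (automatically non-full, since $\ell(T_i)<w$ throughout) with $j\notin S$; in the former case append a single cell to row $\ell(T_{2j-1})+1$ of $T_{2j-1}$ and delete $j$ from $S$, and in the latter case remove the unique cell in row $\ell(T_{2j-1})$ of $T_{2j-1}$ and adjoin $j$ to $S$. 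Each such move keeps the object inside $\MUD^<_n$ and multiplies $\omega(T)\prod_{j\in S}(-x_j^2)$ by $-1$, and the pairs left fixed are exactly those $(T,S)$ with $\ell(T_{2j-1})=w-1$ for every $j\in S$ and with every length-peak of $T$ occurring at a position $2j-1$ with $j\in S$. Appending one cell to row~$w$ of $T_{2j-1}$ for each $j\in S$ turns such a fixed point into a tableau $T'$ in $\widehat{\UD}_n(w;\mu\to\nu)$ all of whose length-peaks are full and with $E_w(T')=\emptyset$, multiplying the weight by $\prod_{j\in S}x_j^2$; since each raised position becomes a full length-peak while no new length-peaks appear elsewhere, $T'$ has exactly $|S|$ length-peaks, so this identifies the fixed-point weight $(-1)^{|S|}\omega(T)\prod_{j\in S}x_j^2$ with $(-1)^{p(T')}\omega(T')$, and the inverse simply lowers every length-peak of a tableau in $\widehat{\UD}_n(w;\mu\to\nu)$ with $E_w=\emptyset$ to height $w-1$. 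Having rewritten both right-hand sides, I would add and subtract the two displayed identities and use $\MUD^{e,e}_n=\MUD^1_n\cup\MUD^{0,e}_n$ and $\MUD^{e,o}_n=\MUD^1_n\cup\MUD^{0,o}_n$ to read off~\eqref{eq:UD-even} and~\eqref{eq:UD-even'}.

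The step I expect to be the main obstacle is the verification surrounding the second involution: checking that its two moves are genuinely mutually inverse and that every intermediate object is a legitimate up-down tableau with all row-lengths below~$w$ (the sensitive point being that a non-full length-peak of height $w-1$ cannot be raised while staying in $\MUD^<_n$, which is exactly what produces the fixed points), and, dually, that lowering the full length-peaks of a tableau $T'\in\widehat{\UD}_n(w;\mu\to\nu)$ with $E_w(T')=\emptyset$ is well-defined and undoes the raising map. The latter rests on the observation that, because $E_w(T')=\emptyset$ and $\ell(T'_0)=k<w$, every occurrence of row-length~$w$ in $T'$ must be an isolated full length-peak, with both neighbouring partitions of length at most $w-1$; this is the place where the two conditions $T'\in\widehat{\UD}_n$ and $E_w(T')=\emptyset$ defining the target objects are used essentially.
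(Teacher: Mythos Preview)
Your argument is correct and uses the same combinatorial ingredients as the paper's proof: the $2$-to-$1$ pairing on $\MUD^*_n$ via $\min E_w(T)$, the sign-reversing involution on $\MUD^<_n$ (the paper's~$\phi$), and the raising bijection to tableaux in $\widehat{\UD}_n$ with $E_w=\emptyset$ (the paper's~$\psi$). The only organizational difference is that the paper first forms the half-sum~\eqref{eq:4} and then decomposes $\MUD^*_n$ and $\MUD^<_n$ into the pieces $A$, $B$, $X$, $Y$, $Z$ to prove~\eqref{eq:UD-even} directly (declaring~\eqref{eq:UD-even'} ``similar''), whereas you keep the sum and difference identities separate, rewrite each right-hand side combinatorially, and then add and subtract at the end to obtain both~\eqref{eq:UD-even} and~\eqref{eq:UD-even'} simultaneously; this is a mild and arguably tidier rearrangement of the same proof.
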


\begin{proof}
  The third identity, Equation~\eqref{eq:UD-even-h}, follows immediately
  from~\eqref{eq:schur-even-k} and~\eqref{eq:1}. We will only prove the
  first identity, Equation~\eqref{eq:UD-even}, because
  \eqref{eq:UD-even'} can be 
  proved similarly.

  From now on we assume that \( 0 \leq k \leq w-1 \) and let
  \( \mu=(1^k,0^{w-k}) \) and \( \nu=(0^w) \). By adding the two
  equations~\eqref{eq:schur-even-k} and~\eqref{eq:schur-even-2h-k}
  and subsequently 
  applying~\eqref{eq:1} and~\eqref{eq:2}, we arrive at
\begin{equation}\label{eq:4}
  \underset {c(\lambda) = k}
  {\sum_{\lambda:\lambda_1 \le 2w}} s_{\lambda}(\vx_n)
  = \frac{1}{2}
  \left(  \sum_{(T,S)\in \MUD_n^*(w; \mu\to \nu)} \omega(T)
  + \sum_{(T,S)\in \MUD^<_n(w; \mu\to \nu)} \omega(T) \prod_{j\in S}(-x_j^2)
 \right).
\end{equation}  

We divide \( \MUD^*_n(w;\mu\to \nu) = A\cup B \) into two subsets, where
\begin{align}
  \notag
A &= \{(T,S) \in \MUD^*_n(w;\mu\to \nu): E_w(T)\ne \emptyset\},\\
  \label{eq:7}
B &= \{(T,S) \in \MUD^*_n(w;\mu\to \nu): E_w(T)= \emptyset\}.
\end{align}
Then \( A \) is the disjoint union of the following two sets:
\begin{align*}
  \MUD^1_n(w;\mu\to \nu)
  &= \{(T,S) \in \MUD^*_n(w;\mu\to \nu): E_w(T)\neq \emptyset \mbox{ and } \min E_w(T)\not \in S\}, \\
  A'  &= \{(T,S) \in \MUD^*_n(w;\mu\to \nu): E_w(T)\neq \emptyset \mbox{ and } \min E_w(T) \in S\}.
\end{align*}
Since \( (T,S)\mapsto (T,S \cup \{\min E_w(T)\}) \) is a
weight-preserving bijection from
\( \MUD^1_n(w;\mu\to \nu) \) to \( A' \), we obtain
\begin{equation}\label{eq:3}
\frac{1}{2}  \sum_{(T,S)\in A}  \omega(T)  = \sum_{(T,S)\in \MUD^1_n(w; \mu\to \nu)} \omega(T).
\end{equation}

Now we claim that
\begin{equation}\label{eq:5}
  \frac{1}{2}
  \left( \sum_{(T,S)\in B}  \omega(T)  +
\sum_{(T,S)\in \MUD^<_n(w; \mu\to \nu)} \omega(T) \prod_{j\in S}(-x_j^2) \right)
   = \sum_{(T,S)\in \MUD^{0,e}_n(w;\mu\to \nu)} \omega(T).
\end{equation}
Note that the first identity, Equation~\eqref{eq:UD-even}, follows
from~\eqref{eq:4}, \eqref{eq:3}, and~\eqref{eq:5}. Thus it remains to prove
the above claim.

By \eqref{eq:MUD*} and \eqref{eq:7},
we have \( S=\emptyset \) for every \( (T,S)\in B \). Thus, we can rewrite \( B \) as
\[
 B = \{(T,\emptyset): T\in \widehat{\UD}_n(w;\mu\to \nu), E_w(T)= \emptyset\}.
\]
Let \( L \) be the left-hand side of \eqref{eq:5}. Then
\begin{equation}\label{eq:13}
 L = \sum_{(T,\emptyset)\in X}  \omega(T)
  + \frac{1}{2} \sum_{(T,\emptyset)\in Y} \omega(T) 
  + \frac{1}{2} \sum_{(T,S)\in Z} \omega(T) \prod_{j\in S}(-x_j^2),
\end{equation}
where
\begin{align*}
  X &=B\cap \MUD^<_n(w;\mu\to \nu),\\
  Y &=B\setminus \MUD^<_n(w;\mu\to \nu),\\
  Z &= \MUD^<_n(w;\mu\to \nu) \setminus B.
\end{align*}

We need to find equivalent descriptions for the sets \( X \), \( Y \),
and \( Z \). Suppose \( (T, \emptyset)\in X \). Since \( (T, \emptyset)\in B \), we have 
\( E_w(T) = \emptyset \), and \( T \) has no
non-full length-peaks. Moreover, since \( (T, \emptyset) \in \MUD^<_n(w;\mu\to \nu) \), we
have \( \ell(T_j)<w \) for all \( j \), which implies that \( T \) has no
full length-peaks. Conversely, any pair \( (T,\emptyset) \) satisfying
these properties is an element of \( X \) because
\( E_w(T)= \emptyset \) and \( p(T)=0 \) imply that \( \ell(T_j)<w \)
for all \( j \). This shows that
\begin{equation}\label{eq:10}
  X= \{(T,\emptyset): T\in \widehat{\UD}_n(w;\mu\to \nu), E_w(T)= \emptyset, p(T)=0 \}.
\end{equation}
By similar arguments, we have
\begin{align}\label{eq:11}
   Y&= \{(T,\emptyset): T\in \widehat{\UD}_n(w;\mu\to \nu), E_w(T)= \emptyset, p(T)\ne0 \},\\
  \label{eq:12}
  Z &= \{(T,S) \in \MUD^<_n(w;\mu\to \nu): \mbox{\( p(T)\ne0 \) or \( S\ne \emptyset\)}  \}.
\end{align}

Now we simplify the sum over \( Z \) in \eqref{eq:13} by finding a
sign-reversing involution on \( Z \). Suppose \( (T,S)\in Z \).
Let \( j \) be the smallest integer satisfying one of the following conditions:
\begin{itemize}
\item \( j\in S \) and \( \ell(T_{2j-1}) < w-1 \);
\item \( j\not\in S \) and \( 2j-1 \) is a length-peak of \( T \).
\end{itemize}
If there is such \( j \), we define \( \phi(T,S) = (T',S') \) as shown
in Figure~\ref{fig:image9}. Otherwise, let \( \phi(T,S) = (T,S) \).
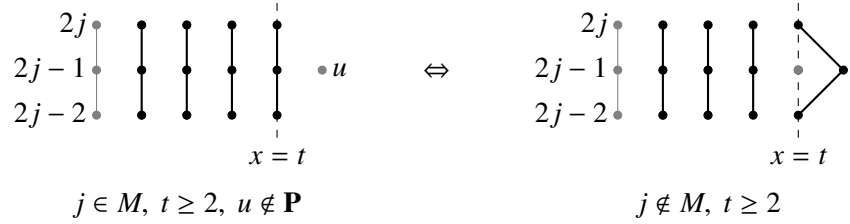
\begin{figure}
\begin{tikzpicture}[scale=0.6]

\draw[color=gray] (0,0) -- (0,2);
\foreach \i in {0,1,2}
\foreach \j in {0,1,2}
\filldraw[fill=gray, color=gray] (\i,\j) circle (2.5pt);

\node[left] at (0,0) {\( 2j-2 \)};
\node[left] at (0,1) {\( 2j-1 \)};
\node[left] at (0,2) {\( 2j \)};
\node[below] at (4,-0.5) {\( x=t \)};
\node[right] at (5,1) {\( u \)};
\node[below] at (2,-1.5) {\( j \in M, ~t\geq 2,~ u\not\in \mathbf{P}\) };
\node[right] at (7,1) {\( \Leftrightarrow \)};

\foreach \i in {1,2,3,4}
\foreach \j in {0,1,2}
\filldraw (\i,\j) circle (2.5pt);

\filldraw[fill=gray, color=gray] (5,1) circle (2.5pt);

\foreach \i in {1,2,3,4}
\draw[thick] (\i,0) -- (\i,1) -- (\i,2);

\draw[dashed] (4,-.5) -- (4,2.5);
\end{tikzpicture}
\qquad  
\begin{tikzpicture}[scale=0.6]

\draw[dashed] (4,-.5) -- (4,2.5);
\draw[color=gray] (0,0) -- (0,2);
\foreach \i in {0,1,2}
\foreach \j in {0,1,2}
\filldraw[fill=gray, color=gray] (\i,\j) circle (2.5pt);

\node[left] at (0,0) {\( 2j-2 \)};
\node[left] at (0,1) {\( 2j-1 \)};
\node[left] at (0,2) {\( 2j \)};
\node[below] at (4,-0.5) {\( x=t \)};
\node[below] at (2,-1.5) {\( j \not\in M, ~t\geq 2 \) };

\foreach \i in {1,2,3,4}
\foreach \j in {0,1,2}
\filldraw (\i,\j) circle (2.5pt);

\filldraw[fill=gray, color=gray] (4,1) circle (2.5pt);
\filldraw (5,1) circle (2.5pt);

\foreach \i in {1,2,3}
\draw[thick] (\i,0) -- (\i,1) -- (\i,2);

\draw[thick] (4,0) -- (5,1) -- (4,2);

\end{tikzpicture}

\caption{Configurations for the involution \( \phi \).}
\label{fig:image9}
\end{figure}
Then \( \phi \) is a sign-reversing involution on \( Z \) whose fixed
point set \( Z_0 \) is the set of pairs
\( (T,S)\in \MUD^<_n(w;\mu\to \nu) \) satisfying the following conditions:
\begin{itemize}
\item \( S\ne \emptyset\);
\item if \( j\in S \), then
\( \ell(T_{2j-1}) = w-1 \);
\item if \( j\not\in S \), then \( 2j-1 \) is
not a length-peak of \( T \). 
\end{itemize}

Now, for \( (T,S)\in Z_0 \),
let \( \psi(T,S) = (T',\emptyset) \),
where \( T' \) is the \( w \)-up-down tableau
obtained from \( T \) by applying the operation
in Figure~\ref{fig:image10} for each \( j\in S \).

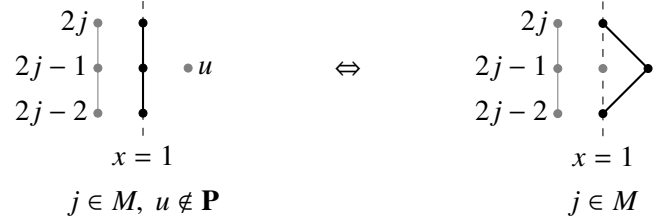
\begin{figure}
\begin{tikzpicture}[scale=0.6]

\draw[color=gray] (0,0) -- (0,2);
\foreach \i in {0}
\foreach \j in {0,1,2}
\filldraw[fill=gray, color=gray] (\i,\j) circle (2.5pt);

\node[left] at (0,0) {\( 2j-2 \)};
\node[left] at (0,1) {\( 2j-1 \)};
\node[left] at (0,2) {\( 2j \)};
\node[below] at (1,-0.5) {\( x=1 \)};
\node[right] at (2,1) {\( u \)};
\node[below] at (1,-1.5) {\( j \in M, ~ u\not\in \mathbf{P}\) };
\node[right] at (5,1) {\( \Leftrightarrow \)};

\foreach \i in {1}
\foreach \j in {0,1,2}
\filldraw (\i,\j) circle (2.5pt);

\filldraw[fill=gray, color=gray] (2,1) circle (2.5pt);

\draw[thick] (1,0) -- (1,1) -- (1,2);

\draw[dashed] (1,-.5) -- (1,2.5);
\end{tikzpicture}
\qquad  \quad
\begin{tikzpicture}[scale=0.6]

\draw[dashed] (1,-.5) -- (1,2.5);
\draw[color=gray] (0,0) -- (0,2);
\foreach \i in {0}
\foreach \j in {0,1,2}
\filldraw[fill=gray, color=gray] (\i,\j) circle (2.5pt);

\node[left] at (0,0) {\( 2j-2 \)};
\node[left] at (0,1) {\( 2j-1 \)};
\node[left] at (0,2) {\( 2j \)};
\node[below] at (1,-0.5) {\( x=1 \)};
\node[below] at (1,-1.5) {\( j \in M\) };

\foreach \i/\j in {1/0,2/1,1/2}
\filldraw (\i,\j) circle (2.5pt);

\filldraw[fill=gray, color=gray] (1,1) circle (2.5pt);

\draw[thick] (1,0) -- (2,1) -- (1,2);

\end{tikzpicture}

\caption{Configurations for the bijection \( \psi \).}
\label{fig:image10}
\end{figure}

It is easy to see that \( \psi \) is a bijection from \( Z_0 \) to
\( Y \) such that, if \( \psi(T,S) = (T',\emptyset) \), then
\( \omega(T) \prod_{j\in S}(-x_j^2) = \omega(T')(-1)^{p(T')} \). This
shows that
\[
\frac{1}{2} \sum_{(T,S)\in Z} \omega(T) \prod_{j\in S}(-x_j^2)
 = \frac{1}{2} \sum_{(T,S)\in Z_0} \omega(T) \prod_{j\in S}(-x_j^2)
 = \frac{1}{2} \sum_{(T,\emptyset)\in Y} \omega(T) (-1)^{p(T)}.
\]
Thus \eqref{eq:13} can be rewritten as
\[
L = \sum_{(T,\emptyset)\in X}  \omega(T)
  + \sum_{(T,\emptyset)\in Y_1} \omega(T) ,
\]
where \( Y_1 \) is the set of \( (T,\emptyset) \) such that
\( T\in \widehat{\UD}_n(w;\mu\to \nu) \), \( E_w(T)= \emptyset \), and
\( p(T) \) is a nonzero even integer. Then, by~\eqref{eq:10},
\( X\cup Y_1 \) is the set of \( (T,\emptyset) \) such that
\( T\in \widehat{\UD}_n(w;\mu\to \nu) \), \( E_w(T)= \emptyset \), and
\( p(T) \) is even, which is exactly the set
\( \MUD^{0,e}_n(w;\mu\to \nu) \).
Therefore
\[
   L = \sum_{(T,S)\in \MUD^{0,e}_n(w;\mu\to \nu)} \omega(T),
\]
and we obtain the claim \eqref{eq:5}.
This completes the proof of~\eqref{eq:UD-even}.
\end{proof}

\section{Standard Young tableaux and lattice walks}\label{sec:SYT_walks}

In this section, we show that the number of standard Young tableaux of
bounded width with given number of odd columns is equal to the
number of certain lattice walks.

We begin by introducing notation for the relevant sets of standard
Young tableaux and of vacillating tableaux.

\begin{defn}\label{def:1}
Let \( \SYT_{n,w}[k] \) denote the set of
standard Young tableaux in \( \SYT_{{n},w} \) having exactly \( k \)
columns of odd length.
\end{defn}

The following definition introduces the lattice paths that will
feature in our results. We rather prefer to use the language of {\it
vacillating tableaux}, although everything could equivalently be
formulated in terms of lattice paths.

\begin{defn}
  A \emph{\( w \)-vacillating tableau} of length \( n \) is a sequence
  \( T=( T_0,  T_1, \dots,  T_n) \) of partitions with\break \( \ell( T_i) \leq w \),
where the partitions \(  T_{i-1} \) and \(  T_{i} \) differ by at most one cell for \( i=1,2,\dots, n \).
We denote by \( \VT_n(w;\mu\to \nu) \) the set of \( w \)-vacillating tableaux \( ( T_0,  T_1, \dots,  T_n) \)
satisfying \(  T_0=\mu \) and \(  T_n=\nu \).
We define \( \VT^>_n(w;\mu\to \nu) \) to be 
the subset of \( w \)-vacillating tableaux in \( \VT_n(w;\mu\to \nu) \) 
with the property that equality of~$ T_{i-1}$ and~$ T_i$
can only occur when $ \ell(T_{i-1})=\ell(T_i)=w$.
\end{defn}

Suppose that \( T=( T_0,  T_1, \dots,  T_n) \in \VT_n(w;\mu\to \nu) \). 
Note that each partition \(  T_i \) can be considered as a \( w \)-tuple of non-increasing integers.
Hence, by definition, \( T \) can be identified with a walk of length \( n \)
from \( \mu \) to \( \nu \) in the region \( \{(x_1,\dots,x_w):x_1\ge \dots\ge x_w\ge0\} \) 
using steps in \( \{ \vec0, \pm \epsilon_1,\dots,\pm \epsilon_w\} \), where \( \epsilon_i \) is the \( i \)-th standard basis vector.

\medskip
Our first result concerns 
standard Young tableaux with an odd bound on the width and a given
number of odd-length columns.
Note that if \( n \not\equiv k \mod 2 \), then \( |\SYT_{{n},2w+1}[k]|=0 \). 
By taking the coefficients of \( x_1x_2 \cdots x_n \) on both sides of~\eqref{eq:UD-odd1} and~\eqref{eq:UD-odd2}, we obtain the following theorem,
which was first proved by the fourth author in \cite[Corollary~5.5(3)]{Okada2016} 
by considering the \( n \)-th tensor power of the natural representation 
of the Lie algebra \( \mathfrak{so}_{2w+1} \).

\begin{thm}\label{thm:SYTodd}
  For nonnegative integers \( n \), \( w \) and \( k \) such that \( 0 \leq k \leq 2w+1 \) and  \( n \equiv k \mod 2 \), we have
  \[ 
  |\SYT_{{n},2w+1}[k]| = |\VT^>_n(w;(1^t,0^{w-t})\rightarrow (0^w) )|,
  \]
  where \( t = k \) if \( k\le w \) and \( t = 2w+1-k \) if \( k>w \). 
  \end{thm}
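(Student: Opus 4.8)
The plan is to obtain Theorem~\ref{thm:SYTodd} by specializing the symmetric-function identities in Theorem~\ref{thm:1} to the coefficient of $x_1x_2\cdots x_n$, and then to reinterpret the resulting sum over marked up-down tableaux as a sum over vacillating tableaux. First I would recall that for a partition $\lambda\vdash n$ the coefficient of $x_1x_2\cdots x_n$ in $s_\lambda(\vx)$ is $f^\lambda$, so the left-hand sides of \eqref{eq:UD-odd1} and \eqref{eq:UD-odd2} specialize to $|\SYT_{n,2w+1}[k]|$ and $|\SYT_{n,2w+1}[2w+1-k]|$ respectively (using that $c(\lambda)=k$ forces $n\equiv k\bmod 2$, so exactly one of the two cases is relevant for a given parity of $n$). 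On the right-hand side, I would extract the coefficient of $x_1\cdots x_n$ from $\sum \omega(T)\omega(S)$ over $(T,S)\in\MUD_n^o(w;(1^k,0^{w-k})\to(0^w))$ with $|S|$ of the prescribed parity. Since each variable $x_i$ appears in $\omega(T)$ with a nonnegative exponent equal to the sum of the two size-differences at steps $2i-1$ and $2i$, and appears in $\omega(S)$ with exponent $\chi(i\in S)$, requiring total degree one in every $x_i$ means: for each $i$, either the pair $(T_{2i-2},T_{2i-1},T_{2i})$ contributes exactly one cell total and $i\notin S$, or it contributes zero cells and $i\in S$ — but the marked-tableau conditions (a length-peak at $2i-1$ forces $i\in S$, and $\ell(T_{2i-1})<w$ forces $i\notin S$) interact with this, so I would carefully enumerate the allowed local configurations at each even-to-even block.

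The key combinatorial step is to build a weight-preserving bijection between the set of coefficient-of-$x_1\cdots x_n$ configurations $(T,S)\in\MUD_n^o(w;(1^k,0^{w-k})\to(0^w))$ and the set $\VT^>_n(w;(1^t,0^{w-t})\to(0^w))$, where $t=k$ if $k\le w$ and $t=2w+1-k$ if $k>w$, with the parity of $|S|$ matching which of the two cases we are in. The natural idea is to ``halve'' the up-down tableau: from $(T_0,T_1,\dots,T_{2n})$ one reads off the vacillating tableau $(T_1,T_3,T_5,\dots,T_{2n-1})$ or some similar odd-indexed (or even-indexed) subsequence, together with bookkeeping for the marks. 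Concretely, in a block $T_{2i-2}\subseteq T_{2i-1}\supseteq T_{2i}$ contributing degree one in $x_i$, either one cell is added then nothing removed (a $+\epsilon$ step somewhere), or nothing added then one cell removed (a $-\epsilon$ step), or — in the marked case — a cell is added to and then removed from row one at a full length-peak, which corresponds to a ``stay'' step $\vec 0$ occurring precisely when the length equals $w$; this is exactly the defining restriction of $\VT^>$. I would verify that the marking set $S$, under the $\MUD^o$ conditions, records exactly the positions of these $\vec 0$ steps, so $|S|$ equals the number of zero steps, and that the starting shape $(1^k,0^{w-k})$ versus $(1^t,0^{w-t})$ with the conjugation/complementation $k\leftrightarrow 2w+1-k$ accounts for passing between the two formulas (mirroring the $c(\lambda)$ versus $2w+1-c(\lambda)$ and the symmetry of variables already exploited in the proof of Theorem~\ref{thm:Goulden_MUDeven}).

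The step I expect to be the main obstacle is the careful analysis of the degree-one constraint at blocks where $\ell(T_{2i-1})$ can be $w$ or less than $w$, and making sure the $\MUD^o$ conditions (length-peak $\Rightarrow i\in S$; $\ell(T_{2i-1})<w\Rightarrow i\notin S$) precisely cut out the configurations that map to $\VT^>$ — in particular ruling out spurious configurations where, say, one cell is added in step $2i-1$ and a \emph{different} cell removed in step $2i$ while $i\in S$, or where a length-peak occurs at a non-full row. I would handle this by a short case check on the local moves, using that once we are at degree one per variable the up-down tableau essentially becomes a vacillating tableau read at half the positions, with ``double moves'' (add then immediately remove) allowed only at the top row and only when the shape has length $w$. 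I would also note that since $\mu$ and $\nu$ in $\VT^>_n(w;\cdot\to\cdot)$ are interchangeable by reversal combined with the symmetry in $x_1,\dots,x_n$, the same reversal-plus-symmetry argument used at the end of the proof of Theorem~\ref{thm:Goulden_MUDeven} lets me normalize the starting shape, and I would remark that the case $n\not\equiv k\bmod 2$ is trivial on both sides. Finally, I would cite \cite[Corollary~5.5(3)]{Okada2016} for the representation-theoretic provenance of the identity, as indicated in the statement.
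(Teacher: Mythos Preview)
Your overall strategy is exactly the paper's: extract the coefficient of $x_1\cdots x_n$ from \eqref{eq:UD-odd1} and \eqref{eq:UD-odd2}, analyze the local constraints block by block, and read off a vacillating tableau. However, your description of the marked case is wrong and, if followed literally, would derail the bijection. You write that when $i\in S$ ``a cell is added to and then removed from row one at a full length-peak''; but if a cell were added and then removed, the exponent of $x_i$ in $\omega(T)$ would be $2$, and together with the mark the total degree in $x_i$ would be $3$, not $1$. The correct analysis---which you in fact stated correctly one sentence earlier---is that when $i\in S$ the block contributes degree zero, i.e.\ $T_{2i-2}=T_{2i-1}=T_{2i}$, and the $\MUD^o$ condition ``$\ell(T_{2i-1})<w\Rightarrow i\notin S$'' then forces $\ell(T_{2i-1})=w$. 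There is no length-peak here; in fact, in the degree-one specialization no length-peak occurs anywhere (if $i\notin S$ then either $T_{2i-2}\subsetneq T_{2i-1}=T_{2i}$ or $T_{2i-2}=T_{2i-1}\supsetneq T_{2i}$, neither of which is a length-peak), so both the $\widehat{\UD}$ condition and the clause ``length-peak $\Rightarrow i\in S$'' are vacuous and your anticipated ``main obstacle'' dissolves.

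With this correction the natural vacillating tableau to read off is the \emph{even}-indexed sequence $(T_0,T_2,\dots,T_{2n})$ (not the odd one: the boundary data sit at $T_0$ and $T_{2n}$), which lands in $\VT^>_n(w;(1^k,0^{w-k})\to(0^w))$, and $S$ records exactly the positions of the zero steps. One point you leave implicit but should make explicit: a step count gives $|S|\equiv n+k\pmod 2$, so for $0\le k\le w$ with $n\equiv k$ the condition ``$|S|$ even'' in \eqref{eq:UD-odd1} is automatic, and for $k>w$ (using \eqref{eq:UD-odd2} with $k$ replaced by $2w+1-k$) the condition ``$|S|$ odd'' is likewise automatic. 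Without this observation you would only get half of the $\VT^>$ count.
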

  
 \begin{proof}
   First, suppose \( 0\le k\le w \). Extracting the coefficient of
   \( x_1 \cdots x_n \) on both sides of~\eqref{eq:UD-odd1}, 
   we obtain that \( |\SYT_{{n},2w+1}[k]| \) is equal to
   the number of pairs
   \( (T,S) \in \MUD_n^o(w; (1^k,0^{w-k})\rightarrow (0^w) )\) such
   that \( \omega(T)\omega(S) =x_1x_2\cdots x_n \) and \( |S| \) is
   even. By \Cref{def:MUD^o}, this is equal to the number of the pairs
   \( (T,S) \) of a \( w \)-up-down tableau
   \( T=( T_0,  T_1,\dots,  T_{2n}) \) and a set \( S \subseteq [n] \)
   such that, for each \( j\in[n] \), one of the following two
   conditions holds:
 \begin{itemize}
 \item \( j \not\in S \), \(  T_{2j-2} \) and \(  T_{2j} \) differ by one cell, 
 and \(  T_{2j-1} \) is the larger partition between \(  T_{2j-2} \) and \(  T_{2j} \);
 \item \( j \in S \), \(  T_{2j-2}= T_{2j-1}= T_{2j} \), and \( \ell( T_{2j-1})=w \).
 \end{itemize}

 Since the odd-indexed partitions \(  T_{2j-1} \) are redundant and
 \( j \in S \) if and only if \(  T_{2j-2} =  T_{2j} \), we can identify
 these pairs \( (T,S) \) with the walks
 \( ( T_{0},  T_{2}, \dots,  T_{2n})\in \VT_n(w;(1^k,0^{w-k})\rightarrow
 (0^w) ) \) with the property that a zero step can only occur when
 \( x_w>0 \). This shows that
 \( |\SYT_{{n},2w+1}[k]| = |\VT^>_n(w;(1^k,0^{w-k})\rightarrow (0^w)|
 \). The other case \( w+1\le k\le 2w+1 \) can be handled similarly
 using~\eqref{eq:UD-odd2}.
 \end{proof}

 \begin{rem}
It is easy to see that \(  |\VT^>_{n}(1;\vec0 \rightarrow \vec0 )| \) is equal to
the number of Motzkin paths of length \( n \) having no horizontal
step on the \( x \)-axis, which is called {\it Riordan number}.  
The Riordan number also counts the number of noncrossing (respectively~nonnesting) partitions of \( [n] \) without singletons; see \href{https://oeis.org/A005043}{A005043}.
In general,  
\( |\VT^>_n(w; \vec0 \rightarrow \vec0 )| \) equals the multiplicity 
of the trivial representation in the \( n \)-th tensor power of the natural representation 
of \( \mathfrak{so}_{2w+1} \). 
See \cite{Okada2016}.
 \end{rem}

\medskip
Now we turn to numbers of standard Young tableaux
with an even bound on the width and a given
number of odd-length columns.
On the other side of the identities to come we need to consider {\it marked\/}
vacillating tableaux.  

\begin{defn}
  Let \( \MVT_n(w;\mu\to \nu) \) denote the set of pairs
  \( (T,S) \) of \( T \in \VT_n(w;\mu\to \nu) \) and
  \( S\subseteq [n] \). We call each element
  \( (T,S)\in \MVT_n(w;\mu\to \nu) \) a \emph{marked vacillating tableau}.
\end{defn}

\begin{defn}
  Let \( \MVT^*_n(w;\mu\to \nu) \) denote the set of
  elements \( (T, S) \in \MVT_n(w;\mu\to \nu) \) such
  that \( T \) has no zero step and, if \( j \in S \), then the
  \( j \)-th step of \( T \) is \( \epsilon_w \) whose starting point
  is on the hyperplane \( x_w=0 \).

  Let \( \MVT^0_n(w;\mu\to \nu) \) denote the set of elements
  \( (T, S) \in \MVT^*_n(w;\mu\to \nu) \) satisfying one of the following
  conditions:
  \begin{itemize}
  \item \( T \) has no \( \epsilon_w \) steps, or
  \item \( T \) has at least one \( \epsilon_w \) step and
    \( j \not\in S \), where the \( j \)-th step of \( T \) is the
    first \( \epsilon_w \) step in \( T \).
  \end{itemize}

  Let \( \MVT^1_n(w;\mu\to \nu) \) denote the set of elements
  \( (T, S) \in \MVT^*_n(w;\mu\to \nu) \) such that \( T \) has at
  least one \( \epsilon_w \) step and \( j \in S \), where the
  \( j \)-th step of \( T \) is the first \( \epsilon_w \) step in
  \( T \).
\end{defn}

Note that
\( \MVT^*_n(w;\mu\to \nu) = \MVT^0_n(w;\mu\to \nu) \sqcup
\MVT^1_n(w;\mu\to \nu) \).

Here is our result on equality of numbers of standard Young tableaux
with an even bound on the number of columns and a fixed number of odd-length
columns and numbers of marked vacillating tableaux.
Note that, if \( n \not\equiv k \mod 2 \), then
\( |\SYT_{{n},2w}[k]|=0 \).
The third equation of~(1) and the equality in~(2) below already appeared 
in \cite[Corollary~5.5 (4)]{Okada2016}.

\begin{thm}\label{thm:SYTeven}
 Let \( n \), \( w \), and \( k \) be nonnegative integers such that \( 0 \leq k \leq w \). 
 \begin{enumerate} 
 \item For \( 0 \leq k \leq w-1 \), we have
   \begin{align*}
    |\SYT_{{n},2w}[k]| &= |\MVT^0_n(w;(1^k,0^{w-k})\rightarrow (0^w))|, \\
    |\SYT_{{n},2w}[2w-k]| &= |\MVT^1_n(w;(1^k,0^{w-k})\rightarrow (0^w))|,\\
  |\SYT_{{n},2w}[k]|+|\SYT_{n,2w}[2w-k]|&=|\MVT^{*}_n(w;(1^k,0^{w-k})\rightarrow (0^w) )|. 
   \end{align*}
  \item For \( k=w \), we have \( |\SYT_{{n},2w}[w]|=  |\MVT^{*}_n(w;(1^w)\rightarrow (0^w) )| \). 
 \end{enumerate}
\end{thm}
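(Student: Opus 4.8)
The plan is to deduce Theorem~\ref{thm:SYTeven} from the combinatorial interpretations in Theorem~\ref{thm:UD-tab} by specializing the symmetric function identities there to the ``standard'' case, exactly as Theorem~\ref{thm:SYTodd} was obtained from Theorem~\ref{thm:1}. Concretely, I would extract the coefficient of $x_1x_2\cdots x_n$ on both sides of~\eqref{eq:UD-even}, \eqref{eq:UD-even'}, and~\eqref{eq:UD-even-h}. On the left-hand sides this coefficient is $|\SYT_{n,2w}[k]|$, $|\SYT_{n,2w}[2w-k]|$, and $|\SYT_{n,2w}[w]|$ respectively, by the defining property of Schur functions recalled in Section~\ref{sec:pre}. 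On the right-hand sides one must count the marked up-down tableaux $(T,S)\in\MUD^{e,e}_n(w;\mu\to\nu)$ (resp.\ $\MUD^{e,o}_n$, $\MUD^*_n$) whose weight $\omega(T)$ equals $x_1x_2\cdots x_n$, i.e.\ each variable $x_i$ appears with exponent exactly $1$.

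First I would analyze what the condition $\omega(T)=x_1\cdots x_n$ forces on a $w$-up-down tableau $T=(T_0,T_1,\dots,T_{2n})$. Since the exponent of $x_i$ in $\omega(T)$ is $-|T_{2i-2}|+2|T_{2i-1}|-|T_{2i}|$ and each consecutive pair differs by a vertical strip, this quantity is at least $0$, and it equals $1$ iff exactly one of the two steps $(T_{2i-2},T_{2i-1})$, $(T_{2i-1},T_{2i})$ adds/removes a single cell while the other is an equality, \emph{or} both steps move a single cell in opposite directions through the \emph{same} row --- but for the standard case one checks the ``net one cell'' configuration is the only one possible, so the odd-indexed $T_{2i-1}$ is redundant: it is the larger of $T_{2i-2}$ and $T_{2i}$ when these differ, and when $T_{2i-2}=T_{2i}$ one must have $T_{2i-1}\supsetneq T_{2i-2}$ by a single cell. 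Thus, exactly as in the proof of Theorem~\ref{thm:SYTodd}, the pair $(T,S)$ collapses to the walk $(T_0,T_2,\dots,T_{2n})\in\VT_n(w;\mu\to\nu)$, together with the record of which indices $j$ have $T_{2j-2}=T_{2j}$ (the zero steps of the walk) and, among those, whether $\ell(T_{2j-1})=w$.

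Next I would translate the defining conditions of $\MUD^{e,e}_n$, $\MUD^{e,o}_n$, and $\MUD^*_n$ (Definitions~\ref{def:UD3}, \ref{def:UD5}, \ref{def:mud_e}) into conditions on the resulting walk. The condition $T\in\widehat{\UD}_n(w;\mu\to\nu)$ --- every length-peak is full --- becomes: whenever a zero step of the walk occurs (the corresponding $T_{2j-1}$ being a length-peak since in the standard case it strictly dominates $T_{2j-2}=T_{2j}$), it must have $\ell(T_{2j-1})=w$, which means the zero step is an $\epsilon_w$-step out of and back to the hyperplane $x_w=0$ --- precisely the defining condition of $\MVT^*_n$. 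Then $E_w(T)$ is exactly the set of such $\epsilon_w$-from-$x_w=0$ steps, $p(T)$ their number, and the marking set $S\subseteq E_w(T)\setminus\{\min E_w(T)\}$ (resp.\ $S=\emptyset$ with $p(T)$ even/odd) matches verbatim the definitions of $\MVT^1_n$ and $\MVT^0_n$: indeed $\MUD^1_n\cup\MUD^{0,e}_n\leftrightarrow\MVT^1_n\cup(\text{no }\epsilon_w\text{ or first }\epsilon_w\text{ not in }S,\ p\text{ even})=\MVT^0_n$ after noting the bijection $(T,S)\mapsto(T,S\cup\{\min E_w(T)\})$ that was already used inside the proof of Theorem~\ref{thm:UD-tab}. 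Hence the coefficient extraction yields $|\SYT_{n,2w}[k]|=|\MVT^0_n(w;\mu\to\nu)|$, $|\SYT_{n,2w}[2w-k]|=|\MVT^1_n(w;\mu\to\nu)|$, and adding gives the $\MVT^*_n$ statement; the $k=w$ case follows identically from~\eqref{eq:UD-even-h}.

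I expect the main obstacle to be the careful bookkeeping in the ``collapse'' step: verifying that in the standard specialization no exotic up-down configurations survive (in particular that the odd-indexed partitions are genuinely redundant and that length-peaks correspond exactly to zero steps of the walk), and then matching the parity/marking conventions of $\MUD^{e,e}_n$ versus $\MVT^0_n$ without an off-by-one error in the role of $\min E_w(T)$. Everything else is a routine transcription of the coefficient-extraction argument already carried out for Theorem~\ref{thm:SYTodd}. I would also remark, as the paper does, that the three ``non-added'' equalities recover \cite[Corollary~5.5(4)]{Okada2016}, providing a consistency check.
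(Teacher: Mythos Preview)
Your overall plan---extract the coefficient of $x_1x_2\cdots x_n$ from the identities of Theorem~\ref{thm:UD-tab}---is exactly the paper's approach; its proof is one sentence, pointing out only that, unlike the odd case, here the set $S$ carries \emph{no} weight (the right-hand sides of \eqref{eq:UD-even}--\eqref{eq:UD-even-h} involve $\omega(T)$ only).

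That said, your translation step contains concrete errors that would derail the argument as written. Because the weight is $\omega(T)$ alone, the exponent of $x_j$ equals $(|T_{2j-1}|-|T_{2j-2}|)+(|T_{2j-1}|-|T_{2j}|)=1$, and since both summands are nonnegative integers, one is $1$ and the other is $0$. Hence $T_{2j-2}\neq T_{2j}$ for every $j$: the collapsed walk has \emph{no} zero steps (this is exactly the ``no zero step'' clause in the definition of $\MVT^*$). Your paragraph invoking zero steps and claiming that ``the corresponding $T_{2j-1}$ [is] a length-peak since in the standard case it strictly dominates $T_{2j-2}=T_{2j}$'' describes a situation that never occurs here. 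In fact $T_{2j-1}$ equals one of $T_{2j-2},T_{2j}$, so $2j-1$ is \emph{never} a length-peak and $p(T)=0$ for every surviving $T$; thus $\MUD^{0,o}$ is empty and $\MUD^{0,e}$ reduces to $E_w(T)=\emptyset$. Your assertion ``$p(T)$ their number'' (meaning $|E_w(T)|$) is wrong.

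With these corrections the dictionary is straightforward: $E_w(T)$ is precisely the set of $\epsilon_w$-steps starting on $x_w=0$, and since the walk begins at $(1^k,0^{w-k})$ with $k\le w-1$, the first $\epsilon_w$-step overall equals $\min E_w(T)$. Then $\MUD^{e,e}$ (standard) $=\MUD^1\cup\MUD^{0,e}$ coincides \emph{literally} with $\MVT^0$, while $\MUD^{e,o}$ (standard) $=\MUD^1$ is in bijection with $\MVT^1$ via $S\mapsto S\cup\{\min E_w(T)\}$. (Your displayed line ``$\MUD^1_n\cup\MUD^{0,e}_n\leftrightarrow\MVT^1_n\cup(\ldots)=\MVT^0_n$'' has the labels tangled.) Once you rewrite the translation with these points fixed, the proof goes through and matches the paper's.
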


\begin{proof}
  This can be proved in a similar manner as \Cref{thm:SYTodd} by
  extracting the
  coefficients of \( x_1x_2\cdots x_n \) in the equations in
  \Cref{thm:UD-tab}, except that in the present case the elements \(
  j \in S \) do not affect the weights~\( x_j \). 
\end{proof}

 \begin{rem}
It is easy to see that \(  |\MVT^{*}_{2n}(1;\vec0 \rightarrow \vec0 )| \) is equal to
the number of Dyck paths of semilength \( n \) with the property
that each up step starting on the \( x \)-axis is allowed to be marked.
The number of these marked Dyck paths is given by
the central binomial coefficients; see \href{https://oeis.org/A000984}{A000984}.
Furthermore,
we have \( |\MVT^{*}_{2n}(2; \vec0 \rightarrow \vec0)| = ( \Cat(n) )^2 \), 
where \( \Cat(n) \) is the \( n \)-th Catalan number; 
see \href{https://oeis.org/A001246}{A001246}.
This follows from the following three claims:

\medskip\noindent
(1)
\( |\MVT^{*}_{2n}(w; \vec0 \rightarrow \vec0)| \) is the multiplicity 
of the trivial representation in the \( (2n) \)-th tensor power of the natural representation 
of \( \mathfrak{so}_{2w} \).

\smallskip\noindent
(2)
The Lie algebra \( \mathfrak{so}_4 \) is isomorphic to the direct sum 
\( \mathfrak{sl}_2 \oplus \mathfrak{sl}_2 \), and the natural representation of \( \mathfrak{so}_4 \) 
corresponds to the outer tensor product of the natural representations of \( \mathfrak{sl}_2 \).

\smallskip\noindent
(3)
The multiplicity of the trivial representation in the \( (2n) \)-th tensor power of 
the natural representation of \( \mathfrak{sl}_2 \) is equal to the Catalan number \( \Cat(n) \).
\end{rem}

\section{Final questions}
\label{sec:questions}

For \( w\geq 1 \), we have the following result; see \cite[Example~2 on
page~423]{Macdonald} and also~\cite[Eq.~(10.5)]{KLO}.

\begin{prop}\label{prop:Haar}
The number  \( |\SYT_{{n},w}[0]| \) is the expected value of \( \operatorname{trace}(O)^{n} \),
  where \( O \) is a \( w \times w \) orthogonal matrix randomly
  selected according to the Haar measure.
\end{prop}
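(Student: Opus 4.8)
The plan is to connect the quantity $|\SYT_{n,w}[0]|$ to a symmetric-function identity and then evaluate the latter by a standard Weyl-integration argument. By Goulden's identity in the form~\eqref{eq:Goulden2m,0}, the generating function $\sum_{\lambda:\lambda_1\le w,\ r(\lambda)=0}s_\lambda(\vx)$ is the same whether the bound is $w=2m$ or $w=2m+1$; moreover, $r(\lambda)=0$ means every part of $\lambda$ is even, equivalently $c(\lambda)=0$ after conjugation, so these are the sums over partitions $\mu$ with $\mu_1\le \lfloor w/2\rfloor$ and all columns even, i.e.\ over $\mu$ whose conjugate has only even parts. The first step is therefore to recall (or re-derive from the classical Littlewood identity $\sum_{\mu'\text{ even}}s_\mu(\vx)=\prod_{i\le j}(1-x_ix_j)^{-1}$, together with its bounded/rectangular refinement) that, restricted to $n$ variables,
\[
\underset{r(\lambda)=0}{\sum_{\lambda:\lambda_1\le w}} s_\lambda(\vx_n)
= \text{(the $\orth$- or $\symp$-type character of rectangular shape)} ,
\]
so that, using~\eqref{eq:theta(s)}, extracting the coefficient of $x_1\cdots x_n$ gives $|\SYT_{n,w}[0]|$ as the coefficient of $x_1\cdots x_n$ in that character sum; equivalently, by the principal specialization / exponential-specialization dictionary it equals a Haar-average of a character of the orthogonal group.

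The cleanest route is the second one: apply the ring homomorphism $\theta$ of Section~\ref{sec:SYT-bounded} to~\eqref{eq:Goulden2m,0}. Using $\theta(f_r(\vx))=I_r(2x)$ from~\eqref{eq:theta(c_i)} and $\theta(s_\lambda(\vx))=f^\lambda x^n/n!$ from~\eqref{eq:theta(s)}, one gets
\[
\sum_{n\ge0}|\SYT_{n,w}[0]|\,\frac{x^n}{n!}
=\det_{1\le i,j\le m}\bigl(I_{i-j}(2x)-I_{i+j}(2x)\bigr),\qquad m=\lfloor w/2\rfloor .
\]
On the other hand, the classical Weyl integration formula for $O(w)$ (see \cite[Example~2, p.~423]{Macdonald}) expresses $\mathbb{E}_{O\sim\mathrm{Haar}}\bigl(\operatorname{trace}(O)^n\bigr)$ as an integral of $\bigl(\sum_i (z_i+z_i^{-1})\bigr)^n$ (plus the fixed $\pm1$ eigenvalues in the odd case) against the appropriate Weyl measure, and such an integral is evaluated by de~Bruijn's formula / a Gram-type determinant into exactly a determinant of Bessel-function entries of the shape $I_{i-j}(2x)-I_{i+j}(2x)$. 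The key step is thus to match the two determinants; this is the same computation that underlies Gessel's Theorem~\ref{thm:Gessel} (indeed $|\SYT_{n,w}[0]|$ is the "no odd columns" piece of $|\SYT_{n,w}|$), and it is already recorded in \cite[Eq.~(10.5)]{KLO}.

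I would organize the write-up as: (i) rewrite $|\SYT_{n,w}[0]|$ via $\theta$ applied to~\eqref{eq:Goulden2m,0}, obtaining the Bessel determinant; (ii) invoke the Weyl integration formula for $O(w)$ to write $\mathbb{E}(\operatorname{trace}(O)^n)$ as a contour/torus integral; (iii) evaluate that integral by de~Bruijn's integration formula, producing the same Bessel determinant; (iv) conclude by comparing coefficients of $x^n/n!$. The main obstacle is purely bookkeeping in step~(iii): one must treat the four cases $w\equiv 0,1,2,3\pmod 4$ (i.e.\ $O(2m)$ versus $O(2m+1)$, and within each the two connected components contributing the $\det=\pm1$ sectors) and check that the boundary terms and the forced eigenvalues $\pm1$ in the odd-dimensional case produce precisely the $-I_{i+j}$ correction to $I_{i-j}$, with no extra factors; modulo this, the identity is a direct consequence of~\eqref{eq:Goulden2m,0} and the standard Haar-integral evaluation, so I would simply cite \cite[p.~423]{Macdonald} and \cite[Eq.~(10.5)]{KLO} for the integral evaluation rather than redo it.
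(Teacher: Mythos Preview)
The paper does not actually prove this proposition; it merely cites \cite[Example~2, p.~423]{Macdonald} and \cite[Eq.~(10.5)]{KLO}. The argument implicit in those references is short: expand $(\operatorname{tr} O)^n=p_1(\vx_w)^n=\sum_{\lambda\vdash n}f^{\lambda}s_\lambda(\vx_w)$ in Schur functions, use that $\int_{O(w)}s_\lambda(O)\,dO$ equals~$1$ if $\ell(\lambda)\le w$ and every part $\lambda_i$ is even, and~$0$ otherwise, and finally pass from the resulting sum over $\{\lambda:\ell(\lambda)\le w,\ r(\lambda)=0\}$ to $|\SYT_{n,w}[0]|$ by conjugating partitions (using $f^{\lambda}=f^{\lambda'}$).

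Your route through the Bessel determinant breaks down at the very first translation step. The set $\SYT_{n,w}[0]$ is defined by the two conditions $\lambda_1\le w$ and $c(\lambda)=0$; Goulden's identity~\eqref{eq:Goulden2m,0}, on the other hand, concerns $\lambda_1\le w$ and $r(\lambda)=0$. These are genuinely different restrictions: conjugation swaps $r$ with $c$ \emph{and} swaps the width bound with a length bound, so one cannot trade $r(\lambda)=0$ for $c(\lambda)=0$ while keeping $\lambda_1\le w$ fixed. Your sentence ``so these are the sums over partitions $\mu$ with $\mu_1\le\lfloor w/2\rfloor$ and all columns even'' is where this confusion becomes explicit; there is no such reduction. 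As a concrete check, your asserted formula
\[
\sum_{n\ge0}|\SYT_{n,w}[0]|\,\frac{x^n}{n!}=\det_{1\le i,j\le m}\bigl(I_{i-j}(2x)-I_{i+j}(2x)\bigr),\qquad m=\lfloor w/2\rfloor,
\]
already fails for $w=1$ (the right-hand side is the empty determinant, equal to~$1$, whereas $|\SYT_{n,1}[0]|=\chi(n\text{ even})$) and for $w=2$, $n=4$ (the right-hand side gives $\Cat(2)=2$, but $|\SYT_{4,2}[0]|=f^{(2,2)}+f^{(1^4)}=3$). If you want a determinantal route, the relevant identities in this paper are those of Theorem~\ref{thm:row_Goulden} (specifically~\eqref{eq:even+}--\eqref{eq:odd-} at $k=0$), not~\eqref{eq:Goulden2m,0}; but the direct Schur-orthogonality argument above makes any such detour unnecessary.
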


The result of Proposition~\ref{prop:Haar} raises the following questions.

\begin{quest}
  Is there an interpretation for \( |\SYT_{n,w}[k]| \) using orthogonal matrices for any \( k \)?
\end{quest}

Further questions that our results in Section~\ref{sec:comb-interpr} (and in
Section~\ref{sec:SYT_walks}) naturally suggest are the following.

\begin{quest}
Are there bijective proofs for the identities in
Theorems~\ref{thm:Goulden_MUDodd}, \ref{thm:1},
and~\ref{thm:UD-tab}?\newline
Are there bijective proofs for the identities in
Theorems~\ref{thm:SYTodd} and~\ref{thm:SYTeven}?
\end{quest}

There are strong indications that an approach using Fomin's growth
diagrams (cf.\ \cite{Krattenthaler2016}) should be successful.
As for evidence, we point out that Theorem~5
in~\cite{Krattenthaler2016} gives a bijection between semistandard Young
tableaux with an even bound on the number of {\it rows} and a fixed
number of odd-length columns and certain down-up tableaux.
We are convinced
that this does produce a bijective proof of Goulden's
identity~\eqref{eq:Goulden2m,k}, however after application of the
symmetric function involution~$\omega$ that maps elementary symmetric
functions to complete homogeneous symmetric functions (which is in
fact Goulden's original formulation of the identity). Furthermore,
we believe that,
by modifying the proof in~\cite{Krattenthaler2016} so that instead of
the first and fourth variation of the Robinson--Schensted--Knuth
algorithm we would use the second and third variation of that
algorithm in terms of growth diagrams (we
refer the reader to~\cite[proof of Theorem~5]{Krattenthaler2016}
and~\cite[Sec.~4]{KratCE}), one would obtain a bijective proof of
Theorem~\ref{thm:Goulden_MUDeven}.

\subsection*{Acknowledgements}
J.H.~was supported by the National Research Foundation of Korea (NRF) grant
funded by the Korea government (MSIT) (IRIS RS-2026-25485989).
J.S.K.~was supported by the National Research Foundation of Korea (NRF) grant funded by the Korea government
(MSIT) (IRIS RS-2025-00557835).
C.K.~was partially supported by the Austrian
Science Foundation FWF, grant 10.55776/F1002, in the framework
of the Special Research Program ``Discrete Random Structures:
Enumeration and Scaling Limits".
S.O.~was partially supported by JSPS KAKENHI grants 21K03202 and 24K06646.

\end{document}